\documentclass[onecolumn]{autart}
\usepackage{} 
\usepackage{amsmath,bm,amssymb,amsfonts,mathrsfs,bbm}
\usepackage{cite}
\usepackage{url}
\usepackage{enumerate}
\usepackage{color}
\usepackage{wrapfig}
\usepackage{dcolumn}
\usepackage{lineno}
\usepackage{etoolbox}
\allowdisplaybreaks[4]

\numberwithin{equation}{section}

\newtheorem{theorem}{Theorem}
\newtheorem{lemma}{Lemma}[section]

\newtheorem{proposition}{Proposition}

\newtheorem{definition}{Definition}

\newtheorem{remark}{Remark}[section]
\newtheorem{assumption}{Assumption}

\newenvironment{theoremproof}{%
	{\bfseries\MakeUppercase{Proof of Theorem~\ref{thm:main result}:\enspace}}%
}{\hfill $\square$\par}

\makeatletter
\let\original@makecaption\@makecaption
\long\def\small@makecaption#1#2{%
	\vskip\abovecaptionskip
	\sbox\@tempboxa{\fontsize{8}{11.5}\selectfont {\fontsize{8.5}{11.5}\selectfont #1:} #2}%
	\ifdim \wd\@tempboxa >\hsize
	\fontsize{8}{11.5}\selectfont {\fontsize{8.5}{11.5}\selectfont #1}\quad #2\par
	\else
	\global \@minipagefalse
	\hb@xt@\hsize{\hfil\box\@tempboxa\hfil}%
	\fi
	\vskip\belowcaptionskip}

\newcommand{\smallcaption}[1]{%
	\let\@makecaption\small@makecaption
	\caption{#1}%
	\let\@makecaption\original@makecaption
}
\makeatother

\begin{document}

%
\begin{frontmatter}

\title{Polynomial iISS for a class of Timoshenko equations with external disturbances and infinite history memory} 

\author{Xinyu Guo},
\author{Jun Zheng}\ead{zhengjun2014@aliyun.com}
\address{{School of Mathematics}, Southwest Jiaotong University, Chengdu, China}

\begin{abstract}                           
 This paper investigates the robustness of Timoshenko equations subject to external disturbances and infinite history memory within the integral input-to-state stability (iISS) framework. While polynomial iISS (PiISS) is a powerful tool for characterizing the influence of these two factors, the existing definitions of PiISS rely on graph norms of the state, which are inconsistent with the classical iISS notion for infinite-dimensional systems. In this paper, we provide a rigorous PiISS definition for a class of Timoshenko equations solely via the state norm, and derive sufficient conditions for PiISS under the Tolksdorf condition on the relaxation function, as well as exponential iISS (EiISS) under a stronger condition. The well-posedness is established by using the semigroup and elliptic equation theories. The PiISS and EiISS are assessed by constructing appropriate Lyapunov functionals and employing various a priori estimates of solutions, thereby fully characterizing the impact of disturbances and history memory on system stability.
\end{abstract}

\begin{keyword}                            
Timoshenko equations, integral input-to-state stability, polynomial iISS, external disturbances, history memory, well-posedness
\end{keyword}                              
\end{frontmatter}
\endNoHyper

\section{Introduction}\label{sec:1}
\subsection{Brief review of stability theory for Timoshenko equations}
When studying the transverse vibration of beams, Timoshenko observed that a micro-element not only undergoes bending and rotation but also deforms due to shear forces. Based on this observation, Timoshenko first derived the following differential equation for transverse vibration with the shear effect
\begin{align*}
	\rho_1\phi_{tt}(x, t)- k\left(\phi_x(x, t) + \psi(x, t)\right)_x =0,\quad(x, t)\in (0, L)\times {{\mathbb{R}}_{>0}},\\
	\rho_2\psi_{tt}(x, t) - b\psi_{xx}(x, t) + k(\phi_{x}(x, t) + \psi(x, t)) = 0,\quad(x, t)\in (0, L)\times {{\mathbb{R}}_{>0}},
\end{align*}
where $\phi$ denotes the transverse displacement, $\psi$ denotes the rotation angle of the beam fibers, $\phi_x+\psi$ denotes the shear angle, and $\rho_1, \rho_2, k, b,$ and $L$ are positive constants. This equation provides a more realistic theoretical foundation for the dynamic analysis of engineering structures such as thick beams and elastic rods in the subsequent work \cite{Timoshenko1921}. Since 2000, considerable efforts have been devoted to conducting stability analysis of Timoshenko equations. By introducing various damping mechanisms, numerous scholars have examined the stability properties of such systems and established corresponding energy decay estimates; see \cite{Dridi2021,Liu2025,Rivera:2008,Raposo2005,Soufyane2003}. However, these studies have not taken into account the history memory effects arising from the long-term relaxation behavior of viscoelastic materials, and therefore cannot fully capture the lasting memory properties inherent in real viscoelastic materials.
\par In contrast to the aforementioned studies that neglect memory effects, the work of \cite{AmmarKhodja2003} incorporates a memory term into the Timoshenko system. Specifically, the viscoelastic memory effect is characterized by the convolution integral term in the rotation equation: $\int_{0}^{t}g(t-s)\psi_{xx}(s)\mathrm{d}s$, where the relaxation function $g$ quantifies the influence of historical strain on the current stress state. By using the energy method, multiplier techniques, and a suitably constructed Lyapunov functional, the authors of \cite{AmmarKhodja2003} demonstrated that the equality of wave speeds constitutes a sufficient condition for the exponential stability of the system. Furthermore, through spectral analysis and counterexamples, they verified the necessity of this condition. Notably, they proved that the decay rate of solutions is directly inherited from that of $g$, namely, exponential decay of $g$ yields exponential decay of solutions, while polynomial decay of $g$ leads to polynomial decay of solutions at the same rate. 
Building upon this framework, the authors of \cite{Guesmia2008} further relaxed the structural conditions imposed on the relaxation function $g$. By refining the multiplier method and energy estimates, they established the corresponding exponential and polynomial decay results under weaker conditions.
\par In \cite{Rivera2008}, the authors investigated the Timoshenko system with infinite memory described by $\int_{0}^{\infty}g(s)\psi_{xx}(x, t-s)\mathrm{d}s$, for which the relaxation function $g$, together with its first and second derivatives, was assumed to satisfy certain structural conditions. By introducing a memory variable, the authors reformulated the original equations, which simplifies the analysis. Then, they established the well-posedness and stability of the system by using the semigroup theory and spectral method. The results indicate that the energy of the system decays exponentially when the wave speed ratios are equal, and polynomially otherwise. The authors of \cite{Messaoudi2009} further relaxed the structural requirements on $g$. They proved that the decay rate of solutions is consistent with that of the relaxation function $g$ when the wave speed ratios are equal, and solutions still decay polynomially otherwise. In \cite{Guesmia2012}, the authors imposed even weaker structural conditions on $g$, thereby deriving more general energy decay estimates. Moreover, the stability results were extended to four classes of Timoshenko-heat coupled systems, further broadening the applicability of the memory-based approach.
\par In recent years, numerous scholars have investigated various Timoshenko systems involving memory-type dissipation, yielding a wealth of results concerning well-posedness, stability, and decay rates of solutions. Representative findings for classical models with memory damping can be found in \cite{Al-Mahdi2024,AlMahdi2021,AlOmari2021}, while studies on coupled or generalized systems with memory are presented in \cite{Boulaaras2026,SilvaRacke2021,LiuMao2024,Messaoudi2025,Zeng2025}. It is noteworthy that the majority of existing literature mainly focuses on scenarios in the absence of external disturbances. However, in practical engineering contexts, systems governed by partial differential equations (PDEs) are frequently subject to a variety of disturbances, including model uncertainties, controller actuation errors, and external inputs. To date, no relevant studies have addressed how such disturbances affect the stability of Timoshenko systems within the existing literature. Therefore, rigorously quantifying the influence of external disturbances on the stability of Timoshenko systems is of significant theoretical and practical importance.
\subsection{Input-to-state stability for PDEs}
In 1989, Sontag first introduced the concept of input-to-state stability (ISS) for finite-dimensional nonlinear systems to describe the influence of external inputs on system states \cite{Sontag1989}. ISS unifies the notions of external and internal stability: it requires that the state remains bounded for any bounded external input, and that the state asymptotically converges to the origin when the external input tends to zero. However, the boundedness assumption on inputs may be restrictive for certain complex systems. To address this limitation, Sontag proposed a weaker robustness notion, namely, the integral input-to-state stability (iISS), in 1998 \cite{Sontag1998}. The iISS accommodates unbounded inputs with bounded energy while still characterizing their effect on system stability; see \cite{Sontag1998}. Collectively, the study of ISS, iISS, and their various variants form the comprehensive framework of ISS theory.
\par Subsequently, researchers including Dashkovskiy, Logemann, Mironchenko, and Prieur, among others, extended ISS theory from finite-dimensional systems to infinite-dimensional systems governed by PDEs; see \cite{Dashkovskiy2013,Logemann2013,MP2011,PM2012}. In recent years, numerous scholars have further investigated and applied ISS theory to PDEs in various scenarios and established different ISS and iISS estimates for the systems; see, e.g., \cite{Chen2025,Jacob2019,2018Karafyllis,Karafyllis2021,Karafyllis2017,Zheng2018,Zheng2019,Zheng2020,Zheng2020b,Zheng:2021,Zheng2021,Zheng2022b,Zheng2026} for parabolic equations, and \cite{PM2012,Karafyllis2022,Karafyllis2023,Lamare2018,Zheng2026} for hyperbolic equations, respectively. For comprehensive surveys, see \cite{Karafyllis2018,Mironchenko2023a,Mironchenko2020,Zheng2020b,Zheng2020,Zheng2026}.
\par Furthermore, most of the existing work focuses on exponential ISS (EISS) or exponential iISS (EiISS) for specific PDEs with disturbances. However, many PDE systems are stable but not exponentially stable, exhibiting asymptotic behaviors that are weaker than exponential decay. Consequently, establishing the EISS and EiISS is not suitable in this setting; see \cite{Wakaiki:2022}. To address this limitation, the authors of \cite{Wakaiki:2022} introduced several stability notions for infinite-dimensional systems, including  polynomial iISS (PiISS) and polynomial ISS. For abstract bilinear systems, it was shown that the system possesses PiISS under certain conditions on the admissibility of the input operator. Nevertheless, the PiISS defined therein relies on the graph norm of initial states, and hence, deviates from the standard definition of classical iISS for infinite-dimensional systems. Moreover, the proposed admissibility condition is relatively restrictive and remains difficult to verify for concrete PDE systems. Importantly, the construction of Lyapunov functionals tailored to PiISS has not been addressed for concrete PDEs. Consequently, research on PiISS for PDE systems remains limited, and apart from the work in \cite{Wakaiki:2022}, no other relevant studies have been reported in the existing literature. In particular, a rigorous PiISS estimate has not been established for PDEs. 
\subsection{Challenges and contributions of the present paper}
Motivated by the afore-mentioned research gaps, the present paper introduces in-domain disturbances into a class of Timoshenko equations with infinite history memory, and analyzes the stability within the iISS framework, thereby clarifying the influnce of disturbances and history memory on system behavior. The present work meets three main challenges. 
\begin{itemize}
	\item The first concerns robustness characterization. The introduction of external disturbances destroys the asymptotic stability inherent to the undisturbed system, thereby rendering it nontrivial to characterize system robustness within the ISS framework. 
	\item The second challenge lies in stability analysis. The presence of disturbances renders the Lyapunov functional commonly adopted in existing studies for asymptotic stability analysis no longer dissipative. Thus, constructing a suitable Lyapunov candidate for robustness analysis becomes highly challenging and requires considerably more refined estimates of the solutions. 
	\item The third challenge exists in  well-posedness analysis. Compared with the existing literature, the simultaneous presence of both disturbances and infinite history memory significantly complicates the well-posedness analysis when linear operator semigroup theory is employed, and therefore other tools are required.
\end{itemize}
\par Overall, the main contributions of this paper are fourfold.
\begin{itemize}
	\item We characterize the robustness of the Timoshenko system subject to in-domain disturbances and infinite history memory within the iISS framework.
	\item We provide a rigorous definition of polynomial iISS (PiISS) by using only the state norm rather than the graph norm, which is consistent with the classical iISS definition, to quantify the influnce of external disturbances and infinite history memory.
	\item We construct an appropriate Lyapunov functional and establish a PiISS estimate for the considered Timoshenko system.  Moreover, under a stronger condition on the relaxation function, we establish an EiISS estimate for the system.
	\item The present work adopts weaker structural conditions and relaxes the requirements for the relaxation function, leading to stability results with broader applicability.
\end{itemize}
\subsection{Organization of the paper}
The rest of this paper is organized as follows. We begin by introducing some notations. In Section \ref{sec:3}, we formulate the problem and state the main results. In Section \ref{sec:4}, we address the well-posedness of the considered equation. In Section \ref{sec:5}, we construct several auxiliary functionals and establish their \textit{a priori} estimates, which serve as necessary preparations for the Lyapunov stability analysis in subsequent sections. In Section \ref{sec:6}, we construct a Lyapunov functional and establish the PiISS and EiISS estimates for the considered system. Finally, we provide concluding remarks in Section \ref{sec:8}.
\par \textbf{Notation.} \quad Let $\mathbb{N}_+$, $\mathbb{R}$, $\mathbb{R}_{\leq0}$, $\mathbb{R}_{>0}$, and $\mathbb{R}_{\geq 0}$ denote the sets of positive integers, real numbers, nonpositive real numbers, positive real numbers, and nonnegative real numbers, respectively. Let $\mathbb{R}^n(n \geq 1)$ denote the $n$-dimensional real Euclidean space.
\par We first define function spaces for single-variable functions. For any open or closed domain $\Omega \subset \mathbb{R}$ and $p\in[1, +\infty)$, let $L^p(\Omega)$ denote the space of all measurable functions~$u$ satisfying $\int_{\Omega}|u(x)|^p\mathrm{d}x<+\infty$. The norm  $\|\cdot\|$  on $L^p(\Omega)$ is defined by $\| u \|_{L^p(\Omega)}=\left(\int_{\Omega}{|u(x)|^p}\mathrm{d}x\right)^{\frac{1}{p}}$. Let~$H_{0}^{1}(\Omega)=\{u:\Omega\to \mathbb{R} \mid u, |\nabla u|\in L^{2}(\Omega), u=0 \text{ on } \partial\Omega \}$ equipped with the norm $\| u \|_{H_{0}^{1}(\Omega)}=\| \nabla u \|_{L^{2}(\Omega)}$. For a positive integer $m$, set $H^{m}(\Omega)=\{u:\Omega\to \mathbb{R} \mid u\in L^{2}(\Omega)~\text{and the}~i\text{-th weak derivative}~D^{i}u\in L^{2}(\Omega), i=1, 2,..., m\}$. For any $T\in \mathbb{R}_{>0}$, define $C([0,T];\mathbb{R}_{\ge 0})=\left\{u : [0,T] \to \mathbb{R}_{\ge 0} \mid u \text{ is continuous on } [0,T]\right\}.$ For a nonnegative integer~$m$, define $C^m(\mathbb{R}_{\geq 0}; \mathbb{R}_{>0}) = \{u:\mathbb{R}_{\geq 0} \to \mathbb{R}_{>0} \mid u \text{ has } m \text{-th continuous derivatives on}~\mathbb{R}_{\geq 0} \}$.
\par We next define function spaces for bivariate functions. Let $L^2(\mathbb{R}_{\geq0}; L^2(0, 1)) = \{u: \mathbb{R}_{\geq0} \rightarrow L^2(0,1) \mid \int_0^{\infty} \int_0^1 | u(x, t) |^{2} \mathrm{d}x\mathrm{d}t<+\infty\}$ equipped with the norm
\begin{center}
	$\| u \|_{L^2(\mathbb{R}_{\geq0}; L^{2}(0, 1))} =\left( \int_0^{\infty} \int_0^1 | u(x, t) |^{2} \mathrm{d}x\mathrm{d}t\right)^{\frac{1}{2}}$.
\end{center}
For a function $g\in C^{0}(\mathbb{R}_{\geq 0};\mathbb{R}_{>0})$, let $L_{g}^{2}(\mathbb{R}_{\geq0}; H_{0}^{1}(0, 1))$ be the space of measurable functions $u:\mathbb{R}_{\geq0}\to H_{0}^{1}(0, 1)$ satisfying $\int_{0}^{1}\int_{0}^{\infty}g(s)|u_{x}(x, s)|^{2}\mathrm{d}s\mathrm{d}x < +\infty$, equipped with the norm and inner product
\begin{center}
	$\| u \|_{L_{g}^{2}(\mathbb{R}_{\geq0}; H_{0}^{1}(0, 1))}=\left(\int_{0}^{1}\int_{0}^{\infty}g(s)|u_{x}(x, s)|^{2}\mathrm{d}s\mathrm{d}x\right)^{\frac{1}{2}}$\\
\end{center}
and
\begin{center}
	$\langle u_1, u_2\rangle_{L_{g}^{2}(\mathbb{R}_{\geq0}; H_{0}^{1}(0, 1))}=\int_{0}^{1}\int_{0}^{\infty}g(s)u_{1x}(x, s)u_{2x}(x, s)\mathrm{d}s\mathrm{d}x$,
\end{center}
respectively. For a nonnegative integer $m$ and a vector space $H$, let $C^{m}(\mathbb{R}_{\geq0}; H)=\{u:\mathbb{R}_{\geq0}\to H\mid\frac{\partial^{i}u}{\partial t^{i}}(\cdot, t)\in H~\text{and}~\frac{\partial^{i}u}{\partial t^{i}}(x, t)~\text{is continuous in }~t\in \mathbb{R}_{\geq0}~\text{for all}~i=0,..., m\}.$
\par For comparison functions, let
\begin{align*}
	\mathcal{K}\! &= \!\{\mu\!:\!\mathbb{R}_{\ge0}\to\mathbb{R}_{\ge0} \!\mid\!\mu\text{ is strictly increasing and}~\mu(0)=0\};\\
	\mathcal{K}_{\infty}\! &=\! \left\{\theta\in\mathcal{K} \!\mid\!\lim_{r\to+\infty}\!\theta(r)=+\infty\right\};\\
	\mathcal{L}\! &= \!\left\{\mu\!:\!\mathbb{R}_{\ge0}\to\mathbb{R}_{\ge0} \!\mid\!\mu\text{ is strictly decreasing and }\lim_{t\to+\infty}\!\mu(t)=0\right\};\\
	\mathcal{KL}\! &=\! \{\beta\!:\!\mathbb{R}_{\ge0}\!\times\!\mathbb{R}_{\ge0}\to\mathbb{R}_{\ge0} \!\mid\!
	\beta(\cdot,t)\in\mathcal{K}, \forall t\in\mathbb{R}_{\ge0},~\text{and}~
	\beta(r,\cdot)\in\mathcal{L}, \forall r\in\mathbb{R}_{>0}\}.
\end{align*}
\section{Problem formulation and main results}\label{sec:3}In this section, we first present the equation for the considered Timoshenko system incorporating infinite history memory and distributed in-domain disturbances. Then, we state the main results of this paper, namely, the PiISS, as well as EiISS, for the considered system.
\subsection{Problem formulation}We begin by extending the classical Timoshenko equations to include in-domain disturbances and infinite history memory, which gives rise to the original system. To facilitate the subsequent well-posedness and stability analysis, we then perform an equivalent reformulation by introducing history memory variables. This leads to the system that serves as the focus of this paper.
\par The Timoshenko equation under consideration is given as follows:
\begin{subequations}\label{eq:ssys}
	\begin{align}
		&\rho_1\phi_{tt}(x, t) \!-\! k\left(\phi_x(x, t) \!+\! \psi(x, t)\right)_x \!= \! f(x, t),\quad(x, t)\in (0,1)\times {{\mathbb{R}}_{>0}},\\
		&\rho_2\psi_{tt}(x, t) \!-\! b\psi_{xx}(x, t) \!+ \!\int_{0}^{\infty}\!\! g(s)\psi_{xx}(x, t \!-\! s)\mathrm{d}s \!+\! k(\phi_{x}(x, t) \!+\! \psi(x, t))\!=\! h(x, t),\quad(x, t)\in (0,1)\times {{\mathbb{R}}_{>0}},\\
		&\phi(0, t)\!=\!\phi(1, t)\!=\!\psi(0, t)\!=\!\psi(1, t)\!=\!0, \quad t\in {{\mathbb{R}}_{\geq 0}},\\
		&\phi(x, 0)\!=\!\phi_0(x),\quad \phi_t(x, 0)\!=\!\phi_1(x),\quad x \in (0, 1),\\
		&\psi(x, -t)\!=\!\psi_0(x, t),\quad \psi_t(x, 0)\!=\!\psi_1(x),\quad x \in (0, 1), t\in {{\mathbb{R}}_{\geq 0}},
	\end{align}
\end{subequations}
where $\rho_1$, $\rho_2$, $k$, and $b$ are positive constants, $\phi(x, t)$ denotes the transverse displacement, $\psi(x, t)$ denotes the rotation angle of the beam fibers, $g(s)$ is the relaxation function, $f(x, t)$ and $h(x, t)$ are in-domain disturbances, and $\phi_{0}(x), \phi_{1}(x), \psi_{0}(x, t),$ and $\psi_{1}(x)$ are given initial data. To facilitate the subsequent analysis of well-posedness and stability of solutions to this system, we introduce the history variable $\eta^t(x,s)$, which is defined by
\begin{align}\label{eq:his}
	\eta^{t}(x, s)&=\psi(x, t)-\psi(x, t-s),\quad \forall x \in (0, 1), s, t\in {{\mathbb{R}}_{\geq 0}}.
\end{align}
\par Consequently, system \eqref{eq:ssys} can be writen as
\begin{subequations}\label{eq:sys}
	\begin{align}
		& \rho_1\phi_{tt}(x, t) \!-\! k\left(\phi_x(x, t) \!+\! \psi(x, t)\right)_x \!=\! f(x, t),\quad(x, t)\in (0,1)\times {{\mathbb{R}}_{>0}}, \label{eq:sys01}\\
		& \rho_2\psi_{tt}(x, t) \!-\! \hat{b}\psi_{xx}(x, t) \!-\! \int_{0}^{\infty}\!\! g(s)\eta_{xx}^{t}(x, s)\mathrm{d}s \!+\! k(\phi_{x}(x, t)\! +\! \psi(x, t)) \!=\! h(x, t),\quad(x, t)\in (0,1)\times {{\mathbb{R}}_{>0}}, \label{eq:sys02}\\
		& \eta_t^{t}(x, s) \!+\! \eta_s^{t}(x, s) \!-\! \psi_{t}(x, t) \!=\! 0,\quad(x, t)\in (0, 1)\times {{\mathbb{R}}_{>0}}, s\in {{\mathbb{R}}_{>0}},\label{eq:sys03}\\
		& \phi(0, t)\!=\!\phi(1, t)\!=\!\psi(0, t)\!=\!\psi(1, t)\!=\!\eta^{t}(0, s)\!=\!\eta^{t}(1, s)\!=\!0,\quad s, t\in {{\mathbb{R}}_{\geq 0}},\label{eq:ib01}\\
		& \phi(x, 0)\!=\!\phi_0(x),\quad \phi_t(x, 0)\!=\!\phi_1(x),\quad x \in (0, 1),\label{eq:ib02}\\
		& \psi(x, -t)\!=\!\psi_0(x, t),\quad \psi_t(x, 0)\!=\!\psi_1(x),\quad x \in (0, 1), t\in {{\mathbb{R}}_{\geq 0}}, \label{eq:ib03}\\
		& \eta^0(x, s)\!=\!\eta_0(x, s),\quad x \in (0, 1), s\in {{\mathbb{R}}_{\geq 0}},
	\end{align}
\end{subequations}
where $\hat{b} = b -\int_{0}^{\infty}g(s) \mathrm{d}s$ is a constant, and $\eta_{0}(x, s)=\psi_{0}(x, 0)-\psi_{0}(x, s)$ represents the given initial data. Therefore, in the sequel, we consider system \eqref{eq:sys}, which integrates the history variable with in-domain disturbances, as the object of our investigation.
\par Throughout this paper, we always propose the following assumptions on the system coefficients, relaxation function, initial data, and external disturbances.
\begin{assumption}\label{ass:Assumption3}
	The equal wave speed ratio condition holds, namely~$\frac{{\rho }_{1}}{k}=\frac{{\rho }_{2}}{b}$.
\end{assumption}
\begin{assumption}\label{ass:Assumption2}
	The relaxation function $g\in C^{0}({{\mathbb{R}}_{\geq 0}}; {{\mathbb{R}}_{>0}}) \cap C^{1}({{\mathbb{R}}_{\geq 0}}; {{\mathbb{R}_{\leq0}}})$ satisfies
	\begin{align*}
		g_{0}=\int_{0}^{\infty}g(s)\mathrm{d}s<b,
	\end{align*}
	and there exists a function $H\in C^{0}({{\mathbb{R}}_{\geq0}}; {{\mathbb{R}}_{>0}})\cap C^{1}({{\mathbb{R}}_{>0}}; {{\mathbb{R}}_{>0}})$ satisfying $H(0)=0$ and the Tolksdorf's condition:
	\begin{equation}
		1\leq \delta_{0}\leq \frac{H'(\tau)\tau}{H(\tau)}\leq \delta_{1}<\frac{3}{2},\quad\forall \tau\in {{\mathbb{R}}_{> 0}},\label{eq:2.4}
	\end{equation}
	such that
	\begin{equation*}
		g'(t)\leq-k_{0}H(g(t)),\quad\forall t\in {{\mathbb{R}}_{> 0}},
	\end{equation*}
	where $\delta_0, \delta_1,$~and $k_0$ are positive constants.
\end{assumption}
\begin{assumption}\label{ass:Assumption1}
	The initial data $\phi_0$ and $\psi_0(\cdot, 0)$ belong to $H_{0}^{1}(0, 1)$ with $\underset{s\geq 0}{\mathop{\sup}}\|\psi_{0x}(\cdot, s)\|_{L^2(0, 1)}<+\infty$, $\phi_1$ and $\psi_1$ belong to $L^{2}(0,1)$, and $\eta_0$ belongs to $L_{g}^{2}(\mathbb{R}_{\geq0}; H_{0}^{1}(0, 1));$ the in-domain disturbances $f$ and $h$ belong to $L^{2}\left(\mathbb{R}_{\geq0}; L^{2}(0, 1)\right)$.
\end{assumption}

\begin{remark}\label{rem:Remark1}
	The structural condition~\eqref{eq:2.4}, which are much weaker than the one proposed in \cite{Messaoudi2009, Rivera2008}, was first introduced by Tolksdorf for studying the regularity of solutions to a class of elliptic equations; see \cite{Tolksdorf:1983}. Functions satisfying this structural condition include not only the classical exponential type, but also the logarithmic, the variable exponent, and other types. Below we present several representative examples (see \cite{Zheng:2021,Zheng:2022} for more)$:$ 
	\begin{enumerate}[(i)]
		\item $H(\tau )={{\tau }^{p}}, \forall p\ge 1;$
		\item $H(\tau )={{\tau }^{a}}{{\log }_{c}}(b\tau +d)$, which satisfies \eqref{eq:2.4} with ${\delta_{0}}=a$ and ${\delta_{1}}=a+\frac{1}{\ln d}$, where $b>0$ and $c, d>1$ are constants;
		\item $H(\tau )= \left\{ \begin{array}{ll} a\tau^p, & 0 \leq \tau < \tau_0 \\ b\tau^{q(\tau)-1}, & \tau \geq \tau_0 \end{array} \right.$, where $\tau_0 > 1$ and $a, b, p > 0$ are constants, and $q \in C^1([\tau_0, +\infty); \mathbb{R})$ satisfies 
		$\begin{cases}
			\delta_{0} \leq q'(\tau)\tau \ln \tau + q(\tau) - 1 \leq \delta_{1}, & \forall \tau \geq \tau_0,\\
			p = q'(\tau_0)\tau_0 \ln \tau_0 + q(\tau_0) - 1, \\
			a = b\tau_0^{q(\tau_0)-1-p}.
		\end{cases}$
	\end{enumerate}
	\par Note that condition~\eqref{eq:2.4} implies the following properties of the function~$H$ (see \cite{Zheng:2021,Lieberman:1991})$:$
	\begin{equation}
		\min\left\{s^{\delta_0}, s^{\delta_{1}}\right\}H(\tau)\leq H(s\tau)\leq \max\left\{s^{\delta_0}, s^{\delta_{1}}\right\}H(\tau),\quad\forall s, \tau\in {\mathbb{R}}_{\geq0}.\label{eq:3.5}
	\end{equation}
\end{remark}
\subsection{Main results}In this subsection, we first specify the concept of PiISS in a rigorous manner, then state the main results of this paper. Let
\begin{align*}
	\mathcal{H}=\left\{(u, v, w, z, l)\in H_0^1(0, 1)\times L^2(0, 1)\times H_0^1(0, 1)\times L^2(0, 1)\times L_g^2(\mathbb{R}_{\geq0}; H_0^1(0, 1))\right\},
\end{align*}
which is a Hilbert space endowed with the inner product $\langle \cdot, \cdot \rangle_\mathrm{H}$ and the norm $\|\cdot\|_\mathrm{H}:$
\begin{align*}
	\langle X_{1}, X_{2}\rangle_\mathrm{H}=\!\int_{0}^{1}\!\left(\rho_{1}v_1v_2+\hat{b}w_{1x}w_{2x}+\rho_{2}z_1z_2+k(u_{1x}+w_{1})(u_{2x}+w_{2})+\!\int_{0}^{\infty}\! g(s)l_{1x}l_{2x}\mathrm{d}s\right)\!\mathrm{d}x
\end{align*}
and
\begin{align*}
	\|X\|_\mathrm{H}&=\left(k\|u_{x}+w\|_{L^2(0, 1)}^{2}+\rho_{1}\|v\|_{L^2(0, 1)}^{2}+\hat{b}\|w_{x}\|_{L^2(0, 1)}^{2}+\rho_{2}\|z\|_{L^2(0, 1)}^{2}+\|l\|_{L_{g}^{2}(\mathbb{R}_{\geq0}; H_{0}^{1}(0, 1))}^{2}\right)^{\frac{1}{2}},
\end{align*}
respectively, for all $X_{1}=(u_1, v_1, w_1, z_1, l_1)\in \mathcal{H}$, $X_{2}=(u_2, v_2, w_2, z_2, l_2)\in\mathcal{H}$, and $X=(u, v, w, z, l)\in \mathcal{H}$. To simplify calculations and clearly characterize each state variable, we introduce a norm $\|\cdot\|_{\mathcal{H}}$ defined as follows:
\begin{align*}
	\|X\|_{\mathcal{H}}&=\left(\|u_x\|_{L^2(0, 1)}^{2}+\|v\|_{L^2(0, 1)}^{2}+\|w_x\|_{L^2(0, 1)}^{2}+\|z\|_{L^2(0, 1)}^{2}+\|l\|_{L_{g}^{2}(\mathbb{R}_{\geq0}; H_{0}^{1}(0, 1))}^{2}\right)^{\frac{1}{2}}.
\end{align*}
The proof of the equivalence of $\|\cdot\|_\mathrm{H}$ and $\|\cdot\|_{\mathcal{H}}$ is provided in Section \ref{sec:6}. Since these two norms are equivalent, all subsequent stability analysis will be carried out with respect to $\|\cdot\|_{\mathcal{H}}$. Naturally, all conclusions also hold for $\|\cdot\|_\mathrm{H}$.
\par We now present the definition of PiISS for system \eqref{eq:sys}.
\begin{definition}[\cite{Wakaiki:2022}]\label{def:Def 1}
	System \eqref{eq:sys} is said to be polynomially integral input-to-state state (PiISS) with respect to in-domain disturbances $f$ and $h$ if there exist $\theta_{1},\theta_{2}\in \mathcal{K}_{\infty}$, $\mu_1,\mu_2\in \mathcal{K}$, and $\beta\in \mathcal{K}\mathcal{L}$ satisfying
	\begin{align*}
		\beta(r,t)=O(t^{-m}) \quad \text{as}~t\to +\infty, \quad \forall r\in \mathbb{R}_{>0},
	\end{align*}
	where $m\in \mathbb{R}_{>0}$ is a constant, such that for all $t\in \mathbb{R}_{\geq0}$, the state $X(t)=(\phi, \phi_t, \psi, \psi_t, \eta^t)\in \mathcal{H}$ admits the following estimate$:$
	\begin{align}
		\|X(t)\|_{\mathcal{H}} &\leq \beta\left(\|X_0\|_{\mathcal{H}}+\underset{s\geq 0}{\mathop{\sup}} \left\|\psi_{0x}(\cdot, s) \right\|_{L^2(0, 1)}, t\right) + \theta_1\left(\int_0^t \mu_1\left(\|f(\cdot, \tau)\|_{L^{2}(0, 1)}\right)\mathrm{d}\tau\right)\nonumber\\
		&\quad  + \theta_2\left(\int_0^t \mu_2\left(\|h(\cdot, \tau)\|_{L^{2}(0, 1)}\right)\mathrm{d}\tau\right).\label{eq:3.06}
	\end{align}
\end{definition}
\begin{remark}
	Within the semi-uniform framework, Wakaiki introduced the notion of PiISS in \cite{Wakaiki:2022} for a class of bilinear systems. Note that since a graph norm $\|\cdot\|_{\mathcal{A}}$ of the initial state, namely, $\|X_{0}\|_{\mathcal{A}}=\|X_{0}\|_{\mathcal{H}}+\|\mathcal{A}X_{0}\|_{\mathcal{H}}$, $X_{0}\in D(\mathcal{A})$ is adopted in the right hand-side of \eqref{eq:3.06}, the notion used in \cite{Wakaiki:2022} is inconsistent with the classical iISS definition for infinite-dimensional systems (see \cite{Karafyllis2018}), where $\mathcal{A}$ represents the $C_0$ semigroup generator for the considered system under an abstract form and $D(\mathcal{A})$ is the domain of $\mathcal{A}$. In contrast, the same norm for the initial state and the system state is used in \eqref{eq:3.06}, thereby providing a rigorous definition of PiISS for PDEs.
\end{remark}
\begin{remark}\label{rem:remark2}
	System \eqref{eq:sys} is said to be exponentially input-to-state stable (EiISS) if $\beta\in \mathcal{KL}$ in \eqref{eq:3.06} is replaced by
	\begin{equation*}
		\beta(r, t) = \theta_{3}(r)\exp(-mt),\quad \forall r\in \mathbb{R}_{> 0}, t \in {\mathbb{R}}_{\geq0},
	\end{equation*}
	where $\theta_3\in \mathcal{K}_{\infty}$ and $m\in \mathbb{R}_{>0}$ is a constant; see \cite{Zheng2018}.
\end{remark}
\par The following theorem is the main result of this paper, which establishes a PiISS estimate for system \eqref{eq:sys}. The detailed proof is presented in Section \ref{subsec:7}.
\begin{theorem}\label{thm:main result} If $\delta_{1}\in \left(1, \frac{3}{2}\right)$, system \eqref{eq:sys} is PiISS, having the following estimate$:$
	\begin{align*}
		\|X(t)\|_{\mathcal{H}}&\leq C\left(\|X_0\|_{\mathcal{H}}+\underset{s\geq0}{\mathop{\sup}} \left\|\psi_{0x}(\cdot, s) \right\|_{L^2(0, 1)}\right)(1+t)^{-\frac{1}{4(\delta_{1}-1)}}+C\left(\!\int_{0}^{t}\!\!{\left\| f(\cdot, \tau ) \right\|_{{{L}^{2}(0, 1)}}^{2}\mathrm{d}\tau }\right)^{\frac{1}{2}}\nonumber\\
		&\quad+C\left(\!\int_{0}^{t}\!\!{\left\| h(\cdot, \tau ) \right\|_{{{L}^{2}(0, 1)}}^{2}\mathrm{d}\tau }\right)^{\frac{1}{2}},\quad \forall t\in \mathbb{R}_{\geq0},\label{eq:2.5}
	\end{align*}
	where $C$ is a positive constant, depending only on $\rho_{1}, \rho_{2}, k, b, k_{0}, \delta_{0}, \delta_{1}$, and the functions $g$ and $H$. In particular, when $f\equiv h\equiv0$, system \eqref{eq:sys} is polynomially stable, having the following estimate$:$
	\begin{equation*}
		\|X(t)\|_{\mathcal{H}}\leq C\left(\|X_0\|_{\mathcal{H}}+\underset{s\geq 0}{\mathop{\sup}} \left\|\psi_{0x}(\cdot, s) \right\|_{L^2(0, 1)}\right)(1+t)^{-\frac{1}{2(\delta_{1}-1)}},\quad \forall t\in \mathbb{R}_{\geq0}.
	\end{equation*}
	Furthermore, if $\delta_{1}=1$, system \eqref{eq:sys} is EiISS, having the following estimate$:$
	\begin{align*}
		\|X(t)\|_{\mathcal{H}}&\leq C\|X_0\|_{\mathcal{H}}\exp\left(-\frac{M}{2}t\right)+C\left(\!\int_{0}^{t}\!\!{\left\| f(\cdot, \tau ) \right\|_{{{L}^{2}(0, 1)}}^{2}\mathrm{d}\tau }\right)^{\frac{1}{2}}+C\left(\!\int_{0}^{t}\!\!{\left\| h(\cdot, \tau ) \right\|_{{{L}^{2}(0, 1)}}^{2}\mathrm{d}\tau }\right)^{\frac{1}{2}},\quad \forall t\in \mathbb{R}_{\geq0},\label{eq:2.7}
	\end{align*}
	where $C$ and $M$ are positive constants, depending only on $\rho_{1}, \rho_{2}, k, b, k_{0}, \delta_{0}, \delta_{1}$, and the functions $g$ and $H$. In particular, when $f \equiv h\equiv0$, system \eqref{eq:sys} is exponentially stable, having the following estimate$:$
	\begin{equation*}
		\|X(t)\|_{\mathcal{H}}\leq C\|X_0\|_{\mathcal{H}}\exp\left(-\frac{M}{2}t\right),\quad \forall t\in \mathbb{R}_{\geq0}.
	\end{equation*}
\end{theorem}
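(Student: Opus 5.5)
The plan is to build a perturbed-energy Lyapunov functional in the spirit of Messaoudi--Said-Houari and Guesmia, with the disturbances handled by Young's inequality. Then a nonlinear differential inequality is integrated, using the Tolksdorf condition \eqref{eq:2.4} and its consequence \eqref{eq:3.5} to absorb the memory term.

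\textbf{Energy identity.} Let
\[
E(t)=\tfrac12\|X(t)\|_{\mathrm{H}}^2,
\]
which is equivalent to $\|X(t)\|_{\mathcal{H}}^2$. Multiplying \eqref{eq:sys01} by $\phi_t$ and \eqref{eq:sys02} by $\psi_t$, using \eqref{eq:sys03} and integrating by parts in $s$ gives
\[
E'(t)=\tfrac12\int_0^1\!\int_0^\infty g'(s)|\eta^t_x|^2\,\mathrm{d}s\,\mathrm{d}x+\int_0^1(f\phi_t+h\psi_t)\,\mathrm{d}x .
\]
This is first done for strong solutions, which exist by the well-posedness result of Section~\ref{sec:4}, and then extended by density.

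\textbf{Auxiliary functionals.} I would introduce the standard multipliers:
\begin{itemize}
\item $I_1=-\rho_2\int_0^1\psi_t\int_0^\infty g(s)\eta^t\,\mathrm{d}s\,\mathrm{d}x$;
\item $I_2=-\int_0^1(\rho_1\phi\phi_t+\rho_2\psi\psi_t)\,\mathrm{d}x$;
\item $I_3=\rho_2\int_0^1\psi_t(\phi_x+\psi)\,\mathrm{d}x+\rho_1\int_0^1\phi_t\int_0^x\psi_t\,\mathrm{d}y\,\mathrm{d}x$ (or the analogous $w=-\int_0^x\psi$ multiplier).
\end{itemize}
Assumption~\ref{ass:Assumption3} is what cancels the bad cross term $\psi_{xt}\phi_x$ in $I_3'$. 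Each derivative is estimated with Young's inequality, and the disturbance terms are bounded by $\varepsilon E+C_\varepsilon(\|f\|^2+\|h\|^2)$. Set
\[
L=NE+N_1I_1+N_2I_2+I_3,
\]
with $N\gg N_1\gg N_2$ chosen so that $L\sim E$. One then expects
\[
L'(t)\le -c\,E(t)+C\int_0^1\!\int_0^\infty g(s)|\eta^t_x|^2\,\mathrm{d}s\,\mathrm{d}x+C\big(\|f\|^2+\|h\|^2\big)+\tfrac{N}{2}\int\!\!\int g'|\eta_x|^2 .
\]

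\textbf{Case $\delta_1=1$.} By \eqref{eq:3.5}, $H(\tau)\ge H(1)\tau$ for $\tau\le1$, so $g'\le -c\,g$ on the range of $g$ (which is bounded). The memory term is then absorbed by the $g'$ term, leaving $L'\le -ML+C(\|f\|^2+\|h\|^2)$. Gronwall's inequality gives $E(t)\le Ce^{-Mt}E(0)+C\int_0^t(\|f\|^2+\|h\|^2)$, and taking square roots yields the EiISS bound.

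\textbf{Case $\delta_1\in(1,\tfrac32)$ (main obstacle).} Now $\int\!\!\int g|\eta_x|^2$ cannot be absorbed directly. The plan is to split $g=g^{1-\alpha}g^{\alpha}$ with $\alpha=\tfrac{1}{\delta_1}$ and use $g'\le -k_0H(g)\le -c\,g^{\delta_1}$ (from \eqref{eq:3.5}, since $g\le g(0)$). H\"older's inequality then gives
\[
\int\!\!\int g|\eta_x|^2\le\Big(\int\!\!\int g^{\delta_1}|\eta_x|^2\Big)^{1/\delta_1}\Big(\int\!\!\int|\eta_x|^2\Big)^{1-1/\delta_1}.
\]
This is where $\sup_s\|\psi_{0x}(\cdot,s)\|$ enters. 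Since $\eta^t(s)=\psi(t)-\psi(t-s)$, the unweighted norm $\|\eta^t_x(\cdot,s)\|$ is bounded by $C\big(\sup_{\tau\le t}E(\tau)^{1/2}+\sup_s\|\psi_{0x}(\cdot,s)\|\big)$. One also needs $\int_0^\infty g^{\,\cdot}$ integrability for the weights; I would arrange the H\"older exponents so that only $\sup_s$ bounds appear, which is presumably why $\delta_1<\tfrac32$ is needed.

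Writing $K$ for the a priori bound $\sup E+\sup_s\|\psi_{0x}\|^2+\int_0^\infty(\|f\|^2+\|h\|^2)$, this yields $\int\!\!\int g|\eta_x|^2\le C K^{1-1/\delta_1}(-E'+\text{input})^{1/\delta_1}$. Multiplying the inequality for $L'$ by $E^{\delta_1-1}$, applying Young's inequality, and using $E^{\delta_1-1}$ monotonicity leads to a functional $\mathcal F\sim E$ satisfying
\[
\mathcal F'\le -c\,\mathcal F^{\delta_1}/K^{\delta_1-1}+C(\|f\|^2+\|h\|^2).
\]
Comparison with $y'=-cy^{\delta_1}$ gives decay $(1+t)^{-1/(\delta_1-1)}$ for $E$, that is $(1+t)^{-1/(2(\delta_1-1))}$ for $\|X\|_{\mathcal H}$ when $f=h=0$.

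With inputs, the squared-norm bookkeeping in the Young step costs a square root, giving the stated exponent $\tfrac{1}{4(\delta_1-1)}$ plus the $L^2$-in-time input terms. This decay/input splitting in the nonlinear ODE comparison is the step I expect to be most delicate.
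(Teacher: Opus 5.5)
Your proposal has a genuine gap in the case $\delta_1\in(1,\frac32)$, at the interpolation step. The unweighted quantity $\int_0^\infty\!\int_0^1|\eta^t_x(x,s)|^2\,\mathrm{d}x\,\mathrm{d}s$ is in general infinite. A bound on $\|\eta^t_x(\cdot,s)\|_{L^2(0,1)}$ that is uniform in $s$ does not make its integral over $s\in(0,\infty)$ finite. Indeed, for $s>t$ one has $\eta^t_x(\cdot,s)=\psi_x(\cdot,t)-\psi_{0x}(\cdot,s-t)$, which does not decay in $s$ (take a constant history). So your H\"older split with the unweighted factor $\bigl(\int_0^1\!\int_0^\infty|\eta^t_x|^2\bigr)^{1-1/\delta_1}$ is unavailable, and with it your direct comparison with $y'=-cy^{\delta_1}$. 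You need a weight $g^\theta$ with $\int_0^\infty g^\theta\,\mathrm{d}s<\infty$. Since $g(s)\lesssim(1+s)^{-1/(\delta_1-1)}$ (Lemma \ref{lem:Lemma3-8}), the paper takes $\theta=\frac12$, which is integrable because $\frac{1}{2(\delta_1-1)}>1$, i.e. $\delta_1<\frac32$. It bounds $\int_0^1\!\int_0^\infty g^{1/2}|\eta^t_x|^2\le\alpha_3(M(t)+m_0^2)$, where $m_0=\sup_s\|\psi_{0x}(\cdot,s)\|_{L^2(0,1)}$ and $M(t)=L(0)+C_3\int_0^t\|f\|^2+C_4\int_0^t\|h\|^2\ge L(t)$. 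It then interpolates with H\"older exponents $\frac{2(\delta_1-1)}{2\delta_1-1}$ and $\frac{1}{2\delta_1-1}$ to get $e_2^{2\delta_1-1}\le\bigl(\alpha_3(M(t)+m_0^2)\bigr)^{2(\delta_1-1)}\int_0^1\!\int_0^\infty g^{\delta_1}|\eta^t_x|^2$. The resulting differential inequality has power $2\delta_1-1$, not $\delta_1$. That, and not a square root lost in a Young step, is the source of the rate $(1+t)^{-1/(2(\delta_1-1))}$ for $E$ and hence of the exponent $\frac{1}{4(\delta_1-1)}$ in the PiISS bound. A related defect: your inequality for $L'$ retains $+C\int_0^1\!\int_0^\infty g|\eta^t_x|^2$. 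For $\delta_1>1$ this cannot in general be dominated by $-g'$, which is only known to satisfy $-g'\gtrsim g^{\delta_1}$. The paper never produces this term. Every memory cross term is estimated through Lemma \ref{lem:Lemma3-6}, namely $\bigl(\int_0^\infty g\,\eta^t_x\,\mathrm{d}s\bigr)^2\le G_0\int_0^\infty g^{\delta_1}|\eta^t_x|^2\,\mathrm{d}s$ with $G_0=\int_0^\infty g^{2-\delta_1}<\infty$ (again using $\delta_1<\frac32$). So for $N$ large, $L'\le-\sigma_1\bigl(e_1+\int_0^1\!\int_0^\infty g^{\delta_1}|\eta^t_x|^2\bigr)+C_3\|f\|^2+C_4\|h\|^2$, with purely input-dependent remainders.

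Your treatment of the disturbances is the second gap. Multiplying by $E^{\delta_1-1}$ and using monotonicity of $E$ needs $E'\le0$, which fails once $f,h\not\equiv0$. Moreover, the state-dependent term $\int_0^1(f\phi_t+h\psi_t)\,\mathrm{d}x$ hidden in your ``input'' can only be bounded by $\varepsilon E+C_\varepsilon(\|f\|^2+\|h\|^2)$, and $\varepsilon E$ cannot be absorbed by $-cE^{\delta_1}$ when $E$ is small. Putting $\int_0^\infty(\|f\|^2+\|h\|^2)$ into $K$ also makes the decaying term depend on the inputs, including future ones, which is not of the form \eqref{eq:3.06}. The paper never multiplies by a power of $E$. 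From $L\le M$ and the interpolation above it gets $L'\le-Q(t)L^{2\delta_1-1}+C_3\|f\|^2+C_4\|h\|^2$ with $Q(t)\gtrsim(M(t)+m_0^2)^{-2(\delta_1-1)}$. It applies the Bihari-type Lemma \ref{lem:Lemma 6.1} to obtain $L(t)\le\beta(L(0),t)+2C_3\int_0^t\|f\|^2+2C_4\int_0^t\|h\|^2$. It then removes the input dependence from $\beta$ by a case split on $[0,T]$, according to whether $L(0)+m_0^2$ dominates $C_3\int_0^T\|f\|^2+C_4\int_0^T\|h\|^2$.

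There are two further omissions. First, with the Dirichlet conditions \eqref{eq:ib01} on both $\phi$ and $\psi$, your $I_3$ leaves the boundary term $\bigl[\phi_x\bigl(\hat b\psi_x+\int_0^\infty g\,\eta^t_x\,\mathrm{d}s\bigr)\bigr]_{x=0}^{x=1}$. The paper absorbs it with the extra multipliers $\rho_1q\phi_t\phi_x$ and $\rho_2q\psi_t\bigl(\hat b\psi_x+\int_0^\infty g\,\eta^t_x\,\mathrm{d}s\bigr)$, $q(x)=2-4x$ (Lemma \ref{lem:Lemma4-4}); these are missing from your $L$. Second, the paper does not obtain the undisturbed rate $(1+t)^{-1/(\delta_1-1)}$ for $E$ by a direct $\delta_1$-power comparison. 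It first proves the $(2\delta_1-1)$-rate, then bootstraps it into a bound on the unweighted history integral before applying your $1/\delta_1$ split. As written, that bootstrap only controls $s\in(0,t)$, so the tail $s>t$ identified above must be handled in any version of that step. Your $\delta_1=1$ case coincides with the paper's Gronwall argument once a correct estimate for $L'$ is in place.
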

\section{Well-posedness analysis}\label{sec:4}
In this section, we employ the semigroup theory for linear operators and the Lax-Milgram theorem, as well as the $L^2$ theory, for elliptic equations to prove the well-posedness of system \eqref{eq:sys}.
We first rewrite the system as an abstract evolution equation $\dot{U}=\mathcal{A}U+F$ in the Hilbert space $\mathcal{H}$, where $\mathcal{A}: D(\mathcal{A})\subset\mathcal{H}\to\mathcal{H}$ is a linear operator, which generates a contraction semigroup, $D(\mathcal{A})$ is the domain of $\mathcal{A}$, and $F$ represents the inhomogeneous term. To verify the conditions of the Lumer-Phillips theorem, the dissipativity of $\mathcal{A}$ is proved by using the system structure conditions and the inner product on $\mathcal{H}$. For surjectivity, that is, $R(I-\mathcal{A})=\mathcal{H}$, we prove it by employing the Lax-Milgram theorem and the $L^2$ theory for elliptic equations.
\par We first define the state space $\mathcal{H}$, the operator $\mathcal{A}$, and its domain $D(\mathcal{A})$ as follows:
\begin{align*}
	\mathcal{H}&=H_0^1(0, 1)\times L^2(0, 1)\times H_0^1(0, 1)\times L^2(0, 1)\times L_g^2(\mathbb{R}_{\geq0}; H_0^1(0, 1)),\\
	\mathcal{A}&=\begin{pmatrix}
		0 & 1 & 0 & 0 & 0 \\
		\frac{k}{\rho_1} \partial_x^2(\cdot) & 0 & \frac{k}{\rho_1} \partial_x(\cdot) & 0 & 0 \\
		0 & 0 & 0 & 1 & 0 \\
		-\frac{k}{\rho_2} \partial_x(\cdot) & 0 & \left(\frac{\hat{b}}{\rho_2} \partial_x^2 - \frac{k}{\rho_2} \right)(\cdot) & 0 & \frac{1}{\rho_2} \int_0^\infty g(s) \partial_x^2(\cdot) \, \mathrm{d}s \\
		0 & 0 & 0 & 1 & -\partial_s(\cdot)
	\end{pmatrix},\\
	D(\mathcal{A})&=\bigg\{(u, v, w, z, l)\in \mathcal{H} \mid u\in H^2(0, 1), \partial_x^{2}\left(\hat{b}w+\int_0^{\infty}g(s)l(s)\mathrm{d}s\right)\in L^2(0, 1), v, z\in H_0^1(0, 1),\\
	&\quad \partial_{s}l\in L_g^2(\mathbb{R}_{\geq0}; H_0^1(0, 1)), l(0)=0\bigg\}.
\end{align*}
\par With the above definitions, we present the result on the well-posedness of system \eqref{eq:sys}.
\begin{proposition}\label{pro:Proposition1}
	System \eqref{eq:sys} admits a unique mild solution $(\phi, \phi_t, \psi, \psi_t, \eta^t)\in C^{0}(\mathbb{R}_{\geq0}; \mathcal{H})$. Furthermore, if $(\phi_0, \phi_1, \psi_0(\cdot, 0), \psi_1, \eta_0) \in D(\mathcal{A})$ and $f, h\in C^{1}(\mathbb{R}_{\geq0}; L^{2}(0, 1))$, then system \eqref{eq:sys} admits a unique strong solution $(\phi, \phi_t, \psi,$ $\psi_t, \eta^t)\in C^{0}(\mathbb{R}_{\geq0}; D(\mathcal{A}))\cap C^{1}(\mathbb{R}_{\geq0}; \mathcal{H})$.
\end{proposition}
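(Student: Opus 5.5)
We will write system \eqref{eq:sys} as the abstract Cauchy problem $\dot U=\mathcal{A}U+F(t)$, $U(0)=U_0$, on $\mathcal{H}$ with the inner product $\langle\cdot,\cdot\rangle_\mathrm{H}$. Here $U=(\phi,\phi_t,\psi,\psi_t,\eta^t)$, $U_0=(\phi_0,\phi_1,\psi_0(\cdot,0),\psi_1,\eta_0)$ and $F=(0,f/\rho_1,0,h/\rho_2,0)$. Assumption~\ref{ass:Assumption1} gives $F\in L^2(\mathbb{R}_{\ge0};\mathcal{H})\subset L^1_{\rm loc}(\mathbb{R}_{\ge0};\mathcal{H})$. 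Once we show that $\mathcal{A}$ generates a $C_0$ contraction semigroup $S(t)$, the mild solution is $U(t)=S(t)U_0+\int_0^tS(t-\tau)F(\tau)\,\mathrm{d}\tau\in C^0(\mathbb{R}_{\ge0};\mathcal{H})$, and it is unique. For the strong solution we invoke the standard result (Pazy, Cor.~4.2.5): if $U_0\in D(\mathcal{A})$ and $F\in C^1(\mathbb{R}_{\ge0};\mathcal{H})$, which follows from $f,h\in C^1(\mathbb{R}_{\ge0};L^2(0,1))$, then $U\in C^0(\mathbb{R}_{\ge0};D(\mathcal{A}))\cap C^1(\mathbb{R}_{\ge0};\mathcal{H})$. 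Assumption~\ref{ass:Assumption2} gives $\hat b>0$, so $\langle\cdot,\cdot\rangle_\mathrm{H}$ is a genuine inner product equivalent to the product norm; the Poincaré inequality handles the $u$ component through $u_x+w$. By the Lumer--Phillips theorem, generation reduces to dissipativity together with $R(I-\mathcal{A})=\mathcal{H}$. Density of $D(\mathcal{A})$ then follows automatically in a Hilbert space.

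\emph{Dissipativity.} For $U=(u,v,w,z,l)\in D(\mathcal{A})$ we compute $\langle\mathcal{A}U,U\rangle_\mathrm{H}$ and integrate by parts in $x$ using the homogeneous Dirichlet conditions. The coupling terms involving $k(u_x+w)$, $\hat b w_x$ and $\int_0^\infty g l_x\,\mathrm{d}s$ all cancel, including the cross term between $z$ and $\int g l_{xx}$ and the term $\int g z_x l_x$. What remains is $-\int_0^1\int_0^\infty g(s)l_{sx}l_x\,\mathrm{d}s\,\mathrm{d}x$. Integrating by parts in $s$ with $l(0)=0$ turns this into $\frac12\int_0^1\int_0^\infty g'(s)|l_x|^2\,\mathrm{d}s\,\mathrm{d}x\le0$, since $g'\le0$. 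The boundary term at $s=\infty$ vanishes: $g|l_x|^2$ and its $s$-derivative are integrable, so a subsequence of $g(s)\|l_x(s)\|^2$ tends to zero. We can make this rigorous by truncating at $s=S_n\to\infty$.

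\emph{Range condition (main obstacle).} Given $(f_1,\dots,f_5)\in\mathcal{H}$, we must solve $U-\mathcal{A}U=(f_1,\dots,f_5)$. Set $v=u-f_1$ and $z=w-f_3$. The $l$-equation $l+l_s=z+f_5$ with $l(0)=0$ gives explicitly
\[
l(s)=(1-e^{-s})z+\int_0^s e^{\tau-s}f_5(\tau)\,\mathrm{d}\tau .
\]
We must check $l,\partial_s l\in L^2_g$. For the integral term this uses Cauchy--Schwarz and Fubini together with $g$ being nonincreasing: $\int_0^\infty g(s)\bigl|\int_0^s e^{\tau-s}f_{5x}(\tau)\,\mathrm{d}\tau\bigr|^2\mathrm{d}s\le\int_0^\infty g(\tau)|f_{5x}(\tau)|^2\,\mathrm{d}\tau$, since $g(s)\le g(\tau)$ for $s\ge\tau$. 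Substituting into the second and fourth rows gives an elliptic system for $(u,w)\in H_0^1\times H_0^1$:
\[
\rho_1u-k(u_x+w)_x=G_1,\qquad \rho_2 w-\tilde b\,w_{xx}+k(u_x+w)=G_2 ,
\]
where $\tilde b=\hat b+\int_0^\infty g(s)(1-e^{-s})\,\mathrm{d}s>0$ and $G_1,G_2\in H^{-1}(0,1)$ collect the data, including the term $\partial_x^2\int g(s)\int_0^se^{\tau-s}f_5\,\mathrm{d}\tau\,\mathrm{d}s$. The associated bilinear form
\[
a((u,w),(\tilde u,\tilde w))=\int_0^1\bigl(\rho_1u\tilde u+\rho_2w\tilde w+k(u_x+w)(\tilde u_x+\tilde w)+\tilde b\,w_x\tilde w_x\bigr)\,\mathrm{d}x
\]
is continuous and coercive on $H_0^1\times H_0^1$, so Lax--Milgram yields a unique weak solution. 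Elliptic $L^2$ regularity then gives $u\in H^2(0,1)$ from the first equation, because $k u_{xx}=\rho_1u-G_1-kw_x\in L^2$. From the second equation we obtain $\partial_x^2(\hat b w+\int_0^\infty g\,l\,\mathrm{d}s)\in L^2(0,1)$. Finally $v,z\in H_0^1$ and $l(0)=0$, so $U\in D(\mathcal{A})$.

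The delicate part is this range step. There are two things to verify carefully. First, the history integral term is only a distribution, so regularity must be asserted for the combination $\hat b w+\int g\,l\,\mathrm{d}s$ and not for $w$ alone; this is exactly why $D(\mathcal{A})$ is defined that way. Second, the $L^2_g$ bounds on $l$ and $l_s$ rely on the monotonicity of $g$ from Assumption~\ref{ass:Assumption2}.
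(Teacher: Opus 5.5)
Your proposal is correct and follows essentially the same route as the paper. Generation comes from Lumer--Phillips: dissipativity in $\langle\cdot,\cdot\rangle_\mathrm{H}$, which reduces to $\frac12\int_0^1\int_0^\infty g'|l_x|^2\,\mathrm{d}s\,\mathrm{d}x\le 0$, and the range condition. For the range condition you use the explicit formulas for $v,z,l$, Lax--Milgram with the same bilinear form (your $\tilde b$ is the paper's $C_g$), and $L^2$ regularity for $u$ and for $\hat b w+\int_0^\infty g\,l\,\mathrm{d}s$. Pazy's results then give the mild and strong solutions. Your version is in fact slightly more careful than the paper's on three points the paper leaves implicit: the boundary term at $s=\infty$, the density of $D(\mathcal{A})$, and the membership $l,\partial_s l\in L^2_g$.
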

\begin{pf}
	Set $U=(\phi, \phi_t, \psi, \psi_t, \eta^t)^{\top}$, $U_0=(\phi_0, \phi_1, \psi_0(\cdot, 0), \psi_1, \eta_0)^{\top}$, $F=\left(0, \frac{f}{\rho_1}, 0, \frac{h}{\rho_2}, 0\right)^{\top}$.
	Then system~\eqref{eq:sys} can be written in an abstract form:
	\begin{equation}
		\dot{U}=\mathcal{A}U+F,\quad U(0)=U_0.\label{eq:U0}
	\end{equation}
	\par Next, we show in two steps that $\mathcal{A}$ is an infinitesimal generator of a contraction semigroup.
	\par \textbf{Step 1: prove the dissipativity of the operator $\mathcal{A}$.} By Assumption \ref{ass:Assumption2}, for any $U\in D(\mathcal{A})$, we have
	\begin{align*}
		\mathrm{Re}\langle\mathcal{A}U, U\rangle_\mathrm{H}
		&=\int_{0}^{1}{\left({{\rho }_{1}}{{\phi }_{t}(x, t)}{{\phi }_{tt}(x, t)}+{{\rho }_{2}}{{\psi }_{t}(x, t)}{{\psi }_{tt}(x, t)}+{\hat{b}{{\psi }_{x}(x, t)}{{\psi }_{xt}(x, t)}}\right)}\mathrm{d}x\nonumber\\
		&\quad+\int_{0}^{1}\left(k({{\phi }_{x}(x, t)}+\psi(x, t) ){{({{\phi }_{x}(x, t)}+\psi(x, t) )}_{t}}+{\int_{0}^{\infty }{g(s)}}\eta _{x}^{t}(x, s)\eta _{xt}^{t}(x, s)\mathrm{d}s\right)\mathrm{d}x\\
		& =k\int_{0}^{1}{\left({{({{\phi }_{x}(x, t)}+\psi(x, t) )}_{x}}{{\phi }_{t}(x, t)}\right)}\mathrm{d}x+\int_{0}^{1}{{{\psi }_{t}(x, t)}\int_{0}^{\infty }{g(s)\eta _{xx}^{t}(x, s)}}\mathrm{d}s\mathrm{d}x\nonumber\\
		&\quad +\hat{b}\int_{0}^{1}{\left({{\psi }_{xx}(x, t)}{{\psi }_{t}(x, t)}+{{\psi }_{x}(x, t)}{{\psi }_{xt}(x, t)}\right)}\mathrm{d}x+\int_{0}^{1}{\int_{0}^{\infty }{g(s)\eta _{x}^{t}(x, s)\eta _{xt}^{t}(x, s)\mathrm{d}s\mathrm{d}x}}\nonumber\\
		&\quad+k\int_{0}^{1}({{\phi }_{x}(x, t)}+\psi(x, t) )\left({{({{\phi }_{x}(x, t)}+\psi (x, t))}_{t}}-\psi_{t}(x, t)\right)\mathrm{d}x\nonumber\\  
		&=-\int_{0}^{\infty }{g(s)}\int_{0}^{1}{\eta _{xs}^{t}(x, s)\eta _{x}^{t}(x, s)\mathrm{d}x\mathrm{d}s}\nonumber\\ 
		&=\frac{1}{2}{{\int_{0}^{1}{\int_{0}^{\infty }{{g}'(s)\left| \eta _{x}^{t}(x, s) \right|^{2}}}}}\mathrm{d}s\mathrm{d}x\nonumber\\
		&\leq-\frac{k_0}{2}{{\int_{0}^{1}{\int_{0}^{\infty }{H(g(s))\left| \eta _{x}^{t}(x, s) \right|^{2}}}}}\mathrm{d}s\mathrm{d}x\nonumber\\
		&\leq0.
	\end{align*}
	Thus, $\mathcal{A}$  is a dissipative operator.
	\par \textbf{Step 2: prove the surjectivity of the operator $I-\mathcal{A}$.} For any $\tilde{F}=(f_1, f_2, f_3, f_4, f_5)^{\top}\in\mathcal{H}$, consider the following equation:
	\begin{align}
		(I-\mathcal{A})\tilde{U}=\tilde{F},\label{eq:U2}
	\end{align}
	where $\tilde{U}=(u, v, w, z, l)^{\top}$. By the definition of the operator $\mathcal{A}$, we obtain
	\begin{subequations}\label{eq:6.4all}
		\begin{align}
			u-v=f_1,\label{eq:6.4a}\\
			-\frac{k}{\rho_{1}}u_{xx}+v-\frac{k}{\rho_{1}}w_{x}=f_2,\label{eq:6.4b}\\
			w-z=f_3,\label{eq:6.4c}\\
			z-\frac{\hat{b}}{\rho_{2}}w_{xx}-\frac{1}{\rho_{2}}\int_{0}^{\infty}{g(s)l_{xx}(s)}\mathrm{d}s+\frac{k}{\rho_{2}}(u_x+w)=f_4,\label{eq:6.4d}\\
			-z+l+l_s=f_5.\label{eq:6.4e}
		\end{align}
	\end{subequations}
	From \eqref{eq:6.4a}, \eqref{eq:6.4c}, \eqref{eq:6.4e}, and $l(x, 0)=0$, it follows that
	\begin{align}
		v&=u-f_1,\label{eq:6.5a}\\
		z&=w-f_3,\label{eq:6.5b}\\
		l&=\left(1-e^{-s}\right)z+\int_{0}^{s}{e^{\tau-s}f_5(\tau)}\mathrm{d}\tau.\label{eq:6.5c}
	\end{align}
	For any $u_1, u_2 \in L^2(0,1)$, we define the inner product $\langle u_1, u_2 \rangle = \int_{0}^{1} u_1 u_2 \mathrm{d}x$. Substituting \eqref{eq:6.5a}, \eqref{eq:6.5b}, and \eqref{eq:6.5c} into \eqref{eq:6.4b} and \eqref{eq:6.4d}, we consider the following weak formulation for \eqref{eq:6.4b} and \eqref{eq:6.4d}, namely, for any test functions $\tilde{u}, \tilde{w} \in H_0^1(0,1)$, we consider
	\begin{subequations}\label{eq:6.7a}
		\begin{align}
			k\langle u_x, \tilde{u}_{x}\rangle+\rho_{1}\langle u, \tilde{u}\rangle+k\langle w, \tilde{u}_x\rangle&=\rho_{1}\langle f_1+f_2, \tilde{u}\rangle,\label{eq:6.7aa}\\
			k\langle u_x, \tilde{w}\rangle+k\langle w, \tilde{w}\rangle+C_{g}\langle w_x, \tilde{w}_x\rangle+\rho_{2}\langle w, \tilde{w}\rangle&=-\left\langle \int_{0}^{\infty}g(s)\int_{0}^{s}e^{\tau-s}f_{5x}(\tau)\mathrm{d}\tau\mathrm{d}s, \tilde{w}_{x}\right\rangle+\rho_{2}\langle f_3+f_4, \tilde{w}\rangle\nonumber\\
			&\quad+\int_{0}^{\infty}g(s)\left(1-e^{-s}\right)\mathrm{d}s\langle f_{3x}, \tilde{w}_{x}\rangle,\label{eq:6.7ab}
		\end{align}
	\end{subequations}
	where, in view of Assumption \ref{ass:Assumption2}, $C_{g}=\hat{b}+\int_{0}^{\infty}g(s)\left(1-e^{-s}\right)\mathrm{d}s$ is a positive constant.
	Set $V=H_0^1(0, 1)\times H_0^1(0, 1)$. We define the bilinear functional on $V$ as follows:
	\begin{align*}
		a\left(Z_{1}, Z_{2}\right)=\rho_{1}\langle u, \tilde{u}\rangle+\rho_{2}\langle w, \tilde{w}\rangle+C_{g}\langle w_x, \tilde{w}_x\rangle+k\langle u_{x}+w, \tilde{u}_{x}+\tilde{w}\rangle,
	\end{align*}
	where $Z_{1}=(u, w)$ and $Z_{2}=(\tilde{u}, \tilde{w})$. Then, \eqref{eq:6.7aa} and \eqref{eq:6.7ab} can be written as
	\begin{align*}
		a\left(Z_{1}, Z_{2}\right)=\mathcal{F}(Z_{2}),
	\end{align*}
	where 
	\begin{align*}
		\mathcal{F}(Z_{2})\!=\!\rho_{1}\langle f_1\!+\! f_2, \tilde{u}\rangle\!+\!\rho_{2}\langle f_3\!+\!f_4, \tilde{w}\rangle\!+\!\int_{0}^{\infty}\!\!\! g(s)\left(1\!-\!e^{-s}\right)\mathrm{d}s\langle f_{3x}, \tilde{w}_{x}\rangle\!-\!\left\langle \int_{0}^{\infty}\!\!\! g(s)\int_{0}^{s}\!\!\! e^{\tau\!-\! s}f_{5x}(\tau)\mathrm{d}\tau\mathrm{d}s, \tilde{w}_{x}\right\rangle.
	\end{align*}
	By the H\"{o}lder inequality, it follows that
	\begin{align*}
		|\mathcal{F}(Z_{2})|&\leq \rho_{1}\|f_1+f_2\|_{L^{2}(0, 1)}\|\tilde{u}\|_{L^{2}(0, 1)}+\rho_{2}\|f_3+f_4\|_{L^{2}(0, 1)}\|\tilde{w}\|_{L^{2}(0, 1)} +g_{0}\|f_3\|_{H_0^1(0, 1)}\|\tilde{w}\|_{H_0^1(0, 1)}\\
		&\quad +\int_{0}^{\infty}g(s)\int_{0}^{s}e^{\tau-s}\|f_{5x}(\tau)\|_{L^{2}(0, 1)}\mathrm{d}\tau\mathrm{d}s\|\tilde{w}_{x}\|_{L^{2}(0, 1)}.
	\end{align*}
	Note that
	\begin{align}
		\int_{0}^{\infty}g(s)\int_{0}^{s}e^{\tau-s}\|f_{5x}(\tau)\|_{L^{2}(0, 1)}\mathrm{d}\tau\mathrm{d}s
		&=\int_{0}^{\infty}\int_{\tau}^{\infty}g(s)e^{\tau-s}\mathrm{d}s\|f_{5x}(\tau)\|_{L^{2}(0, 1)}\mathrm{d}\tau\nonumber\\
		&=\int_{0}^{\infty}\int_{\tau}^{\infty}g(s)e^{\tau-s}\mathrm{d}s\left(g^{-\frac{1}{2}}(\tau)g^{\frac{1}{2}}(\tau)\right)\|f_{5x}(\tau)\|_{L^{2}(0, 1)}\mathrm{d}\tau\nonumber\\
		&\leq \left(\int_{0}^{\infty}g^{-1}(\tau)\left(\int_{\tau}^{\infty}g(s)e^{\tau-s}\mathrm{d}s\right)^{2}\mathrm{d}\tau\right)^{\frac{1}{2}}\left(\int_{0}^{\infty}g(\tau)\|f_{5x}(\tau)\|_{L^{2}(0, 1)}^{2}\mathrm{d}\tau\right)^{\frac{1}{2}}\nonumber\\
		&\leq \left(\int_{0}^{\infty}g^{-1}(\tau)\left(g(\tau)\int_{\tau}^{\infty}e^{\tau-s}\mathrm{d}s\right)^{2}\mathrm{d}\tau\right)^{\frac{1}{2}}\|f_{5}\|_{L_{g}^{2}(\mathbb{R}_{\geq0}; H_0^1(0, 1))}\nonumber\\
		&\leq \sqrt{g_0}\|f_{5}\|_{L_{g}^{2}(\mathbb{R}_{\geq0}; H_0^1(0, 1))}.\label{eq:4.8a}
	\end{align}
	Setting $\xi_{1}=\max\left\{\rho_{1}, \rho_{2}, g_0, \sqrt{g_0}\right\}$, we obtain
	\begin{align*}
		|\mathcal{F}(Z_{2})|&\leq \xi_{1}\left(\|f_1+f_2\|_{L^{2}(0, 1)}+\|f_3+f_4\|_{L^{2}(0, 1)}+\|f_3\|_{H_0^1(0, 1)}+\|f_{5}\|_{L_{g}^{2}(\mathbb{R}_{\geq0}; H_0^1(0, 1))}\right)\|Z_{2}\|_{V}.
	\end{align*}
	We next prove the boundedness and coercivity of the bilinear functional $a\left(Z_{1}, Z_{2}\right)$.
	\begin{enumerate}[(i)]
		\item Boundedness of the bilinear functional $a\left(Z_{1}, Z_{2}\right)$.
		\begin{align*}
			|a\left(\! Z_{1}, Z_{2}\!\right)|&\leq \rho_{1}\|u\|_{L^{2}(0, 1)}\|\tilde{u}\|_{L^{2}(0, 1)}\!+\!\rho_{2}\|w\|_{L^{2}(0, 1)}\|\tilde{w}\|_{L^{2}(0, 1)}\!+\! C_{g}\|w_{x}\|_{L^{2}(0, 1)}\|\tilde{w}_{x}\|_{L^{2}(0, 1)}\\
			&\quad+\! k\|u_{x}\!+\! w\|_{L^{2}(0, 1)}\|\tilde{u}_{x}\!+\!\tilde{w}\|_{L^{2}(0, 1)}\\
			&\leq \left(\!\rho_{1}\!+\!\rho_{2}\!+\! C_{g}\!\right)\|Z_{1}\|_{V}\|Z_{2}\|_{V} \!+\!2k\left(\!\sqrt{\|u_{x}\|_{L^2(0, 1)}^{2}\!+\!\|w\|_{L^2(0, 1)}^{2}}\!\right)\left(\!\sqrt{\|\tilde{u}_{x}\|_{L^2(0, 1)}^{2}\!+\!\|\tilde{w}\|_{L^2(0, 1)}^{2}}\!\right)\\
			&\leq \left(\!\rho_{1}\!+\!\rho_{2}\!+\! C_{g}\!+\!2k\!\right)\|Z_{1}\|_{V}\|Z_{2}\|_{V}.
		\end{align*}
		\item Coercivity of the bilinear functional $a\left(Z_{1}, Z_{2}\right)$. Set $\xi_{2}=\min\left\{\rho_{1}, \rho_{2}, C_{g}, k\right\}$. It follows that
		\begin{align*}
			a\left(Z_{1}, Z_{1}\right)&=\rho_{1}\|u\|_{L^{2}(0, 1)}^{2}+\rho_{2}\|w\|_{L^{2}(0, 1)}^{2}+C_{g}\|w_{x}\|_{L^{2}(0, 1)}^{2}+k\|u_{x}+w\|_{L^{2}(0, 1)}^{2}\\
			&\geq \xi_{2}\left(\|u\|_{L^{2}(0, 1)}^{2}+2\|w\|_{L^{2}(0, 1)}^{2}+\|w_{x}\|_{L^{2}(0, 1)}^{2}+\|u_{x}\|_{L^{2}(0, 1)}^{2}+2\int_{0}^{1}u_{x}w\mathrm{d}x\right).
		\end{align*}
		By Young's inequality, we obtain
		\begin{align*}
			2\int_{0}^{1}|u_{x}w|\mathrm{d}x\leq \frac{2}{3}\int_{0}^{1}|u_{x}|^{2}\mathrm{d}x+\frac{3}{2}\int_{0}^{1}w^{2}\mathrm{d}x,
		\end{align*}
		which yields
		\begin{align*}
			2\int_{0}^{1}u_{x}w\mathrm{d}x\geq -\frac{2}{3}\int_{0}^{1}|u_{x}|^{2}\mathrm{d}x-\frac{3}{2}\int_{0}^{1}w^{2}\mathrm{d}x.
		\end{align*}
		Thus, it follows that
		\begin{align*}
			a\left(Z_{1}, Z_{1}\right)&\geq \xi_{2}\left(\|u\|_{L^{2}(0, 1)}^{2}+\frac{1}{2}\|w\|_{L^{2}(0, 1)}^{2}+\|w_{x}\|_{L^{2}(0, 1)}^{2}+\frac{1}{3}\|u_{x}\|_{L^{2}(0, 1)}^{2}\right)\\
			&\geq \frac{1}{3}\xi_{2}\left(\|u\|_{L^{2}(0, 1)}^{2}+\|w\|_{L^{2}(0, 1)}^{2}+\|w_{x}\|_{L^{2}(0, 1)}^{2}+\|u_{x}\|_{L^{2}(0, 1)}^{2}\right)\\
			&\geq  \frac{2}{3}\xi_{2}\|Z_{1}\|_{V}^{2}.
		\end{align*}
	\end{enumerate}
	Then, by the Lax-Milgram theorem \cite[p.315, Theorem 1]{Evans:2010}, equation \eqref{eq:6.7a} admits a unique weak solution $(u,w)\in H_0^1(0, 1)\times H_0^1(0, 1)$. Combining \eqref{eq:6.5a}, \eqref{eq:6.5b}, \eqref{eq:6.5c}, and \eqref{eq:4.8a}, we deduce that \eqref{eq:6.4all} admits a unique weak solution $\tilde{U}=(w, v, w, z, l)\in H_0^1(0, 1) \times H_0^1(0, 1) \times H_0^1(0, 1) \times H_0^1(0, 1) \times L_{g}^{2}(\mathbb{R}_{\geq0}; H_0^1(0, 1)))$. To ensure that the solution $\tilde{U}\in D(\mathcal{A})$, we need to improve the regularity of the weak solution. Indeed, \eqref{eq:6.4e} implies that $l_{s}\in L_{g}^{2}(\mathbb{R}_{\geq0}; H_0^1(0, 1))$. Let $\mathcal{G}=\hat{b}w+\int_{0}^{\infty}g(s)l(s)\mathrm{d}s$. Then from \eqref{eq:6.4b} and \eqref{eq:6.4d}, we obtain
	\begin{align}
		u_{xx}&=\frac{\rho_{1}}{k}v-w_{x}-\frac{\rho_{1}}{k}f_2,\label{eq:4.8}\\
		\mathcal{G}_{xx}&=ku_x+kw+\rho_{2}z-\rho_{2}f_4.\label{eq:4.9}
	\end{align}
	Moreover, the right-hand sides of \eqref{eq:4.8} and \eqref{eq:4.9} belong to $L^{2}(0, 1)$. By the $L^2$-regularity theory for elliptic equations \cite[Chapter 6.3]{Evans:2010}, we have $u, \mathcal{G}\in H^{2}(0, 1)$. Thus, $\tilde{U}=(u, v, w, z, l)\in D(\mathcal{A})$, which implies that the operator $I-\mathcal{A}$ is surjective. By the Lumer-Phillips theorem \cite[p.14, Theorem 4.3]{Pazy1983}, $\mathcal{A}$ is an infinitesimal generator of a contraction semigroup on the Hilbert space $\mathcal{H}$. 
	\par 
	According to the semigroup theory (see, e.g., \cite[p.106]{Pazy1983}), we conclude that for $U_0\in \mathcal{H}$ and $F\in L^{2}(\mathbb{R}_{\geq0}; \mathcal{H})$, equation \eqref{eq:U0} admits a unique mild solution $U\in C^{0}(\mathbb{R}_{\geq0}; \mathcal{H})$. Furthermore, $U_0\in D(\mathcal{A})$ and $F\in C^{1}(\mathbb{R}_{\geq0}; \mathcal{H})$, then equation \eqref{eq:U0} admits a unique strong solution $U\in C^{0}(\mathbb{R}_{\geq0}; D(\mathcal{A}))\cap C^{1}(\mathbb{R}_{\geq0}; \mathcal{H})$; see \cite[p.107 Corollary 2.5]{Pazy1983}. This completes the proof of Proposition\ref{pro:Proposition1}.$\hfill\square$
\end{pf}
\section{Construction of auxiliary functionals}\label{sec:5}In this section, we construct several auxiliary functionals to deal with the product terms of $ \phi, \psi$, and $\eta^{t}$, as well as their derivatives, and establish several \textit{a priori} estimates for them. Following the method in~\cite{Messaoudi2009,Rivera2002}, these functionals will be used to facilitate the construction of a Lyapunov functional.
\par For all $t\in \mathbb{R}_{\geq0}$ and states $(\phi, \phi_{t}, \psi, \psi_{t}, {\eta}^{t})$ of system \eqref{eq:sys}, we consider the following auxiliary functionals,
\begin{align}
	{{I}_{1}}(t)&\!=\!\int_{0}^{1}\!\!\!{\left(\!{{\rho }_{2}}{{\psi }_{t}(x, t)}\psi(x, t) \!+\!{{\rho }_{1}}{{\phi }_{t}(x, t)}w(x, t)\!\right)}\mathrm{d}x,\label{eq:13}\\
	{{I}_{2}}(t)&\!=\!-\!{{\rho }_{2}}\int_{0}^{1}\!\!\!{{{\psi }_{t}}(x, t)\int_{0}^{\infty }\!\!\!\!{g(s){{\eta }^{t}}(x, s)\mathrm{d}s\mathrm{d}x}},\label{eq:14}\\
	J(t)&\!=\!{{\rho }_{2}}\int_{0}^{1}\!\!\!{{{\psi }_{t}(x, t)}\left(\!{{\phi }_{x}(x, t)}\!+\!\psi(x, t) \!\right)}\mathrm{d}x \! +\!\frac{{{\rho }_{1}}\hat{b}}{k}\int_{0}^{1}\!\!\!{{{\psi }_{x}(x, t)}{{\phi }_{t}(x, t)}\mathrm{d}x}\!+\!\frac{{{\rho }_{1}}}{k}\int_{0}^{1}\!\!\!{{{\phi }_{t}}(x, t)\int_{0}^{\infty }\!\!\!\!{g(s)\eta _{x}^{t}(x, s)\mathrm{d}s\mathrm{d}x}},\label{eq:15}\\
	K(t)&\!=\!-{{\rho }_{1}}\int_{0}^{1}\!\!\!{\phi(x, t) {{\phi }_{t}(x, t)}\mathrm{d}x}\!-\!{{\rho }_{2}}\int_{0}^{1}\!\!\!{\psi(x, t) {{\psi }_{t}(x, t)}\mathrm{d}x},\label{eq:16}
\end{align}
where $w$ is the solution to the equation
\begin{align*}
	-{{w}_{xx}}&={{\psi }_{x}},\\
	w(0)&=w(1)=0.
\end{align*}
\par We first establish two lemmas, which will facilitate the subsequent derivations.
\begin{lemma}\label{lem:Lemma3-8} For any $\delta_{1}\in \big[1, \frac{3}{2}\big)$, the functions $g(t), g^{\frac{1}{2}}(t)$ ,and $g^{2-\delta_{1}}(t)$ are integrable over $(0, \infty)$.
\end{lemma}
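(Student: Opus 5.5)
Integrability of $g$ is immediate from Assumption \ref{ass:Assumption2}, since $g_0=\int_0^\infty g(s)\,\mathrm{d}s<b<\infty$ and $g>0$ is continuous. The real content is the decay rate of $g$, which we extract from the differential inequality $g'\le -k_0H(g)$ together with the lower bound in \eqref{eq:3.5}. Since $g$ is positive and nonincreasing, $0<g(t)\le g(0)$ for all $t\ge0$.

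Put $\tau=g(0)$ and $s=g(t)/g(0)\in(0,1]$ in \eqref{eq:3.5}. Because $\delta_1\ge\delta_0$ and $s\le1$, we have $\min\{s^{\delta_0},s^{\delta_1}\}=s^{\delta_1}$. Hence
\begin{equation*}
H(g(t))\ge \Big(\tfrac{g(t)}{g(0)}\Big)^{\delta_1}H(g(0))=c_1\,g^{\delta_1}(t),\qquad c_1=H(g(0))g(0)^{-\delta_1}>0,
\end{equation*}
and therefore $g'(t)\le -k_0c_1g^{\delta_1}(t)$ for $t>0$. We then split into two cases.

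\emph{Case $\delta_1=1$.} Gronwall's inequality gives $g(t)\le g(0)e^{-k_0c_1t}$. Then $g^{1/2}$ and $g^{2-\delta_1}=g$ decay exponentially, so both are integrable.

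\emph{Case $\delta_1\in(1,\tfrac32)$.} Divide by $g^{\delta_1}>0$ and integrate $\frac{\mathrm{d}}{\mathrm{d}t}g^{1-\delta_1}=(1-\delta_1)g^{-\delta_1}g'\ge(\delta_1-1)k_0c_1$. This yields $g^{1-\delta_1}(t)\ge g^{1-\delta_1}(0)+(\delta_1-1)k_0c_1t$, that is,
\begin{equation*}
g(t)\le C(1+t)^{-\frac{1}{\delta_1-1}}.
\end{equation*}
Since $\delta_1<\tfrac32$, we have $\frac{1}{\delta_1-1}>2$. Consequently $g^{1/2}(t)\le C(1+t)^{-\frac{1}{2(\delta_1-1)}}$ with exponent $>1$, so $g^{1/2}$ is integrable. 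Moreover $2-\delta_1\in(\tfrac12,1)$, so $g^{2-\delta_1}(t)\le C(1+t)^{-\frac{2-\delta_1}{\delta_1-1}}$ with exponent $>\tfrac{1/2}{1/2}=1$, and $g^{2-\delta_1}$ is integrable as well. Both functions are also continuous on $[0,\infty)$, so no local integrability issue arises.

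The main point to check is the direction of the inequality when \eqref{eq:3.5} is applied with $s\le1$: we need the $\delta_1$ power, not the $\delta_0$ power, and this is exactly what makes the threshold $\delta_1<\tfrac32$ the right one for integrability of $g^{1/2}$. Everything else is routine ODE comparison.
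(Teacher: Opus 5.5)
Your proposal is correct and follows the paper's proof essentially step by step. Both arguments derive $H(g(t))\ge (g(t)/g(0))^{\delta_1}H(g(0))$ from \eqref{eq:3.5} and obtain $g'\le -c\,g^{\delta_1}$. For $\delta_1=1$ this gives exponential decay, and for $\delta_1\in(1,\frac32)$ it gives the bound $g(t)\le C(1+t)^{-1/(\delta_1-1)}$, with integrability read off from the exponents. The only cosmetic difference is that you take $\int_0^\infty g<\infty$ directly from $g_0<b$ in Assumption~\ref{ass:Assumption2}, whereas the paper re-derives it from the decay estimate.
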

\begin{pf}
	From \eqref{eq:3.5}, it follows that for any $t \in {\mathbb{R}}_{\geq0}$,
	\begin{align*}
		\min\left\{\left(\frac{g(t)}{g(0)}\right)^{\delta_0}, \left(\frac{g(t)}{g(0)}\right)^{\delta_{1}}\right\}H(g(0))&\leq H(g(t))\leq \max\left\{\left(\frac{g(t)}{g(0)}\right)^{\delta_0}, \left(\frac{g(t)}{g(0)}\right)^{\delta_{1}}\right\}H(g(0)).
	\end{align*}
	Since $g\in C^{0}({{\mathbb{R}}_{\geq 0}}; {{\mathbb{R}}_{>0}}) \cap C^{1}({{\mathbb{R}}_{\geq 0}}; {{\mathbb{R}_{\leq0}}})$, we have
	\begin{equation}
		H(g(t))\geq \left(\frac{g(t)}{g(0)}\right)^{\delta_1}H(g(0)),\quad\forall t\in {\mathbb{R}}_{\geq0}.\label{eq:2.6}
	\end{equation}
	Let $k_1=k_0\frac{H(g(0))}{g^{\delta_{1}}(0)}$. Then we obtain
	\begin{align*}
		g'(t)\leq -k_0H(g(t))\leq -k_1g^{\delta_{1}}(t), \quad\forall t\in \mathbb{R}_{>0}.
	\end{align*}
	When $\delta_{1}=1$, it follows from Gronwall's inequality that
	\begin{equation*}
		g(t)\leq g(0)\exp(-k_{1}t), \quad\forall t\in \mathbb{R}_{\geq0}.
	\end{equation*}
	Thus, we have
	\begin{align*}
		\int_{0}^{\infty}{g(t)}\mathrm{d}t&\leq\int_{0}^{\infty}{g(0)\exp(-k_1t)}\mathrm{d}t=\frac{g(0)}{k_1}<+\infty,\\
		\int_{0}^{\infty}{g^{\frac{1}{2}}(t)}\mathrm{d}t&\leq\int_{0}^{\infty}{g^{\frac{1}{2}}(0)\exp\left(-\frac{k_1}{2}t\right)}\mathrm{d}t=\frac{2g^{\frac{1}{2}}(0)}{k_1}<+\infty,\\
		\int_{0}^{\infty}{g^{2-\delta_{1}}(t)}\mathrm{d}t&=\int_{0}^{\infty}{g(t)}\mathrm{d}t<+\infty.
	\end{align*}
	For $\delta_{1}\in \left(1, \frac{3}{2}\right)$, set $k_2=\min\left\{g^{-(\delta_{1}-1)}(0), k_1(\delta_{1}-1)\right\}$. Then we have
	\begin{align*}
		g(t)\leq \left(g^{-(\delta_{1}-1)}(0)+k_1(\delta_{1}-1)t\right)^{-\frac{1}{\delta_{1}-1}}\leq k_2^{-\frac{1}{\delta_{1}-1}}(1+t)^{-\frac{1}{\delta_{1}-1}}, \quad t\in \mathbb{R}_{\geq0}.
	\end{align*}
	Since $\delta_{1}\in \left(1, \frac{3}{2}\right)$, it follows that $\frac{1}{\delta_{1}-1}>1, \frac{1}{2(\delta_{1}-1)}>1$ and $\frac{2-\delta_{1}}{\delta_{1}-1}>1$. Therefore,
	\begin{align*}
		\int_{0}^{\infty}{g(t)}\mathrm{d}t&\leq k_2^{-\frac{1}{\delta_{1}-1}}\int_{0}^{\infty}(1+t)^{-\frac{1}{\delta_{1}-1}}\mathrm{d}t<+\infty,\\
		\int_{0}^{\infty}{g^{\frac{1}{2}}(t)}\mathrm{d}t&\leq k_2^{-\frac{1}{2(\delta_{1}-1)}}\int_{0}^{\infty}(1+t)^{-\frac{1}{2(\delta_{1}-1)}}\mathrm{d}t<+\infty,\\
		\int_{0}^{\infty}{g^{2-\delta_{1}}(t)}\mathrm{d}t&\leq k_2^{-\frac{2-\delta_{1}}{\delta_{1}-1}}\int_{0}^{\infty}(1+t)^{-\frac{2-\delta_{1}}{\delta_{1}-1}}\mathrm{d}t<+\infty.
	\end{align*}
	This completes the proof of Lemma \ref{lem:Lemma3-8}.$\hfill\square$
\end{pf}
\begin{lemma}\label{lem:Lemma3-6}Set~$G_{0}=\int_{0}^{\infty }{{{g}^{2-{\delta_{1}}}}(s)\mathrm{d}s}$. For $\delta_{1}\in \big[1, \frac{3}{2}\big)$, it holds that
	\begin{align}
		\int_{0}^{1}{{{\left(\int_{0}^{\infty }{g(s)\eta _{x}^{t}}(x, s)\mathrm{d}s\right)^{2}}}\mathrm{d}x\le }{G_{0}}\int_{0}^{1}{\int_{0}^{\infty }{{{g}^{{\delta_{1}}}}(s){{\left| \eta _{x}^{t}(x, s) \right|}^{2}}\mathrm{d}s\mathrm{d}x}},\quad \forall t\in \mathbb{R}_{\geq0},\label{eq:8}\\
		\int_{0}^{1}{{{\left(\int_{0}^{\infty }{g(s){{\eta }^{t}}}(x, s)\mathrm{d}s\right)^{2}}}\mathrm{d}x\le }{G_{0}}\int_{0}^{1}{\int_{0}^{\infty }{{{g}^{{\delta_{1}}}}(s){{\left| \eta _{x}^{t}(x, s) \right|}^{2}}\mathrm{d}s\mathrm{d}x}},\quad \forall t\in \mathbb{R}_{\geq0}.\label{eq:9}
	\end{align}
\end{lemma}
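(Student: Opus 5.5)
The plan is a weighted Cauchy--Schwarz inequality in the variable $s$, followed by an application of Poincar\'e's inequality for the second estimate. The finiteness of $G_0$ is supplied by Lemma \ref{lem:Lemma3-8}.

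For \eqref{eq:8}, fix $t$ and $x$ and split the weight as $g(s)=g^{\frac{2-\delta_1}{2}}(s)\,g^{\frac{\delta_1}{2}}(s)$. The Cauchy--Schwarz inequality in $s$ then gives
\begin{align*}
\left(\int_0^\infty g(s)\eta_x^t(x,s)\,\mathrm{d}s\right)^2\le \int_0^\infty g^{2-\delta_1}(s)\,\mathrm{d}s\int_0^\infty g^{\delta_1}(s)\left|\eta_x^t(x,s)\right|^2\mathrm{d}s = G_0\int_0^\infty g^{\delta_1}(s)\left|\eta_x^t(x,s)\right|^2\mathrm{d}s .
\end{align*}
By Lemma \ref{lem:Lemma3-8}, $g^{2-\delta_1}$ is integrable on $(0,\infty)$ for every $\delta_1\in[1,\frac32)$, so $G_0<+\infty$. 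Integrating over $x\in(0,1)$ yields \eqref{eq:8}.

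For \eqref{eq:9}, apply the same splitting to $\int_0^\infty g(s)\eta^t(x,s)\,\mathrm{d}s$, which gives the bound $G_0\int_0^\infty g^{\delta_1}(s)|\eta^t(x,s)|^2\,\mathrm{d}s$. Integrate over $x$ and use Fubini to exchange the order of integration. Since $\eta^t(\cdot,s)\in H_0^1(0,1)$ by \eqref{eq:ib01}, the one-dimensional Poincar\'e inequality $\int_0^1|\eta^t(x,s)|^2\,\mathrm{d}x\le \int_0^1|\eta^t_x(x,s)|^2\,\mathrm{d}x$ holds for each $s$. The sharp constant on $(0,1)$ is $1/\pi^2\le 1$, so no extra constant appears. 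Integrating this against $g^{\delta_1}(s)\,\mathrm{d}s$ gives \eqref{eq:9}.

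The only step that needs care is justifying Fubini and the finiteness of the right-hand sides. All integrands are nonnegative, so Tonelli applies. If the right-hand side is infinite, the inequality is trivial. For states in $\mathcal{H}$ it is in fact finite: because $g$ is nonincreasing, $g^{\delta_1}\le g^{\delta_1-1}(0)\,g$, so the right-hand side is controlled by $\|\eta^t\|^2_{L_g^2(\mathbb{R}_{\geq0};H_0^1(0,1))}$. I do not anticipate any genuine obstacle; the main point is the choice of exponents in the splitting so that the first factor is exactly $G_0$.
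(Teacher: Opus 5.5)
Your proof is correct and follows essentially the same route as the paper: you split $g=g^{1-\delta_1/2}g^{\delta_1/2}$, apply Cauchy--Schwarz in $s$ and integrate in $x$, and for the second estimate you add the Friedrichs (Poincar\'e) inequality in $x$. Your remarks on the finiteness of $G_0$ via Lemma~\ref{lem:Lemma3-8} and on using Tonelli are valid justifications that the paper leaves implicit.
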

\begin{pf}By H\"{o}lder's inequality, for all $t\in \mathbb{R}_{\geq0}$, we obtain
	\begin{align*} 
		\int_{0}^{\infty }{g(s)\eta _{x}^{t}}(x, s)\mathrm{d}s&=\int_{0}^{\infty }{{{g}^{1-\frac{{\delta_{1}}}{2}}}(s){{g}^{\frac{{\delta_{1}}}{2}}}(s)}\eta _{x}^{t}(x, s)\mathrm{d}s\\
		&\le {{\left(\int_{0}^{\infty }{{{g}^{2-{\delta_{1}}}}(s)\mathrm{d}s}\right)}^{\frac{1}{2}}}{{\left(\int_{0}^{\infty }{{{g}^{{\delta_{1}}}}(s){{\left| \eta _{x}^{t}(x, s) \right|}^{2}}\mathrm{d}s}\right)}^{\frac{1}{2}}},
	\end{align*}
	which implies that
	\begin{align}
		\int_{0}^{1}{{{\left(\int_{0}^{\infty }{g(s)\eta _{x}^{t}}(x, s)\mathrm{d}s\right)^{2}}}\mathrm{d}x }
		&\le {G_{0}}\int_{0}^{1}{\int_{0}^{\infty }{{{g}^{{\delta_{1}}}}(s){{\left| \eta _{x}^{t}(x, s) \right|}^{2}}\mathrm{d}s\mathrm{d}x}}.\label{eq:10}
	\end{align}
	Similarly, by H\"{o}lder's inequality, for all $t\in \mathbb{R}_{\geq0}$, we obtain
	\begin{equation}
		{{\left(\int_{0}^{\infty }{g(s)\eta ^{t}}(x, s)\mathrm{d}s\right)^{2}}}\le {G_{0}}\int_{0}^{\infty }{{{g}^{{\delta_{1}}}}(s){{\left| \eta ^{t}(x, s) \right|}^{2}}\mathrm{d}s}.\label{eq:11}
	\end{equation}
	By Friedrichs' inequality (\cite[p. 262]{Mironchenko2023pre}), for all $t\in \mathbb{R}_{\geq0}$, it holds that
	\begin{equation}
		\int_{0}^{1}\int_{0}^{\infty }{{{g}^{{\delta_{1}}}}(s){{\left| \eta ^{t}(x, s) \right|}^{2}}\mathrm{d}s}\mathrm{d}x\leq \int_{0}^{1}\int_{0}^{\infty }{{{g}^{{\delta_{1}}}}(s){{\left| \eta_{x} ^{t}(x, s) \right|}^{2}}\mathrm{d}s}\mathrm{d}x.\label{eq:12}
	\end{equation}
	Combining \eqref{eq:10}, \eqref{eq:11}, and \eqref{eq:12}, we get \eqref{eq:8} and \eqref{eq:9}.$\hfill\square$
\end{pf}
\par Next, we estimate the derivative of ${{I}_{1}}(t)$.
\begin{lemma}\label{lem:Lemma4-1} Let~$(\phi, \phi_t, \psi, \psi_t, \eta^t)$ be a solution to system~\eqref{eq:sys}. Then, for any~${{\varepsilon }_{1}}\in {\mathbb{R}}_{>0}$ and ${{\lambda }_{1}}\in {\mathbb{R}}_{>0}$, it holds that
	\begin{align}
		\frac{\mathrm{d}}{\mathrm{d}t}{{I}_{1}}(t)&\le \!\left(\!-\hat{b}+3{{\lambda }_{1}}\!\right)\!\int_{0}^{1}\!{\psi _{x}^{2}(x, t)}\mathrm{d}x+{{\varepsilon }_{1}}{{\rho }_{1}}\!\int_{0}^{1}\!{\phi _{t}^{2}(x, t)}\mathrm{d}x+\!\left(\!{{\rho }_{2}}+\frac{{{\rho }_{1}}}{4{{\varepsilon }_{1}}}\!\right)\!\int_{0}^{1}\!{\psi _{t}^{2}(x, t)}\mathrm{d}x\nonumber\\
		&\quad+\frac{{G_{0}}}{4{{\lambda }_{1}}}\!\int_{0}^{1}\!{\int_{0}^{\infty }\!{{{g}^{{\delta_{1}}}}(s){{\left| \eta _{x}^{t}(x, s) \right|}^{2}}}}\mathrm{d}s\mathrm{d}x+\frac{1}{4{{\lambda }_{1}}}\!\int_{0}^{1}\!\left(\!{f}^{2}(x, t)+{h}^{2}(x, t)\!\right)\!\mathrm{d}x, \quad\forall t\in \mathbb{R}_{>0},\label{eq:17}
	\end{align}
	where $G_{0}$ is defined as in Lemma \ref{lem:Lemma3-6}.
\end{lemma}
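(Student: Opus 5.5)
The plan is to differentiate $I_1$ directly along strong solutions, substitute the equations \eqref{eq:sys01}--\eqref{eq:sys02}, and estimate term by term. The bound for general mild solutions then follows by density, using Proposition~\ref{pro:Proposition1} and continuous dependence (integrated in time). Write $\int$ for $\int_0^1\cdot\,\mathrm{d}x$. The product rule gives
\[
\tfrac{\mathrm{d}}{\mathrm{d}t}I_1=\rho_2\!\int\!\psi_t^2+\int\!\psi\,(\rho_2\psi_{tt})+\int\! w\,(\rho_1\phi_{tt})+\rho_1\!\int\!\phi_t w_t .
\]
In the second term I substitute $\rho_2\psi_{tt}=\hat b\psi_{xx}+\int_0^\infty g\eta^t_{xx}\,\mathrm{d}s-k(\phi_x+\psi)+h$ and integrate by parts using \eqref{eq:ib01}. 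This yields
\[
-\hat b\int\psi_x^2-\int\psi_x\!\int_0^\infty\! g\eta^t_x\,\mathrm{d}s-k\int(\phi_x+\psi)\psi+\int h\psi .
\]
In the third term I substitute $\rho_1\phi_{tt}=k(\phi_x+\psi)_x+f$, which gives $-k\int(\phi_x+\psi)w_x+\int fw$.

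The key step, and the only real obstacle, is to see that the two coupling terms cancel. Integrating $-w_{xx}=\psi_x$ once gives $w_x=-\psi+c(t)$. Since $w(0)=w(1)=0$, we have $\int w_x=0$, so $c(t)=\int\psi$. Since $\phi(0)=\phi(1)=0$, we have $\int\phi_x=0$. Hence
\[
-k\int(\phi_x+\psi)w_x=k\int(\phi_x+\psi)\psi-k\Big(\int\psi\Big)^2 .
\]
The first piece cancels $-k\int(\phi_x+\psi)\psi$ exactly, and the remainder $-k(\int\psi)^2\le0$ is simply dropped.

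For the term $\rho_1\int\phi_t w_t$, I need an elliptic estimate on $w$. Differentiating the defining problem in $t$ gives $-w_{txx}=\psi_{tx}$ with $w_t(0)=w_t(1)=0$. Testing against $w_t$ gives $\|w_{tx}\|^2=-\int\psi_t w_{tx}$, hence $\|w_{tx}\|\le\|\psi_t\|$, and Poincaré/Friedrichs then gives $\|w_t\|\le\|\psi_t\|$. The same argument applied to $w$ itself gives $\|w\|\le\|w_x\|\le\|\psi\|\le\|\psi_x\|$. Young's inequality then yields
\[
\rho_1\int\phi_t w_t\le\varepsilon_1\rho_1\int\phi_t^2+\frac{\rho_1}{4\varepsilon_1}\int\psi_t^2 .
\]

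The remaining three terms are each handled by Young's inequality with weight $\lambda_1$:
\begin{itemize}
\item For the memory term, Young's inequality followed by \eqref{eq:8} of Lemma~\ref{lem:Lemma3-6} gives
\[
-\int\psi_x\!\int_0^\infty\! g\eta^t_x\,\mathrm{d}s\le\lambda_1\int\psi_x^2+\frac{G_0}{4\lambda_1}\int\!\!\int_0^\infty g^{\delta_1}|\eta^t_x|^2\,\mathrm{d}s\,\mathrm{d}x .
\]
\item For the $h$-term, $\int h\psi\le\lambda_1\int\psi^2+\frac1{4\lambda_1}\int h^2\le\lambda_1\int\psi_x^2+\frac1{4\lambda_1}\int h^2$ by Poincaré.
\item For the $f$-term, $\int fw\le\lambda_1\int w^2+\frac1{4\lambda_1}\int f^2\le\lambda_1\int\psi_x^2+\frac1{4\lambda_1}\int f^2$ by the bound on $\|w\|$ above.
\end{itemize}
Collecting all contributions gives the coefficient $-\hat b+3\lambda_1$ on $\int\psi_x^2$ and $\rho_2+\rho_1/(4\varepsilon_1)$ on $\int\psi_t^2$. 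The remaining coefficients are exactly as in \eqref{eq:17}.
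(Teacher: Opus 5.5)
Your proposal is correct and follows essentially the same route as the paper: differentiate $I_1$, substitute the two equations, integrate by parts, use the elliptic bounds $\|w\|\le\|w_x\|\le\|\psi\|\le\|\psi_x\|$ and $\|w_t\|\le\|\psi_t\|$, and apply Young's inequality together with Lemma \ref{lem:Lemma3-6}. The only cosmetic difference is in the coupling terms. The paper leaves the remainder as $-k\int_0^1\psi^2\,\mathrm{d}x+k\int_0^1 w_x^2\,\mathrm{d}x$ and discards it via $\|w_x\|\le\|\psi\|$. You instead identify it explicitly as $-k\bigl(\int_0^1\psi\,\mathrm{d}x\bigr)^2$, which is the same quantity.
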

\begin{pf}Combining \eqref{eq:sys} and \eqref{eq:13}, for all~$t\in \mathbb{R}_{>0}$, it follows from integration by parts that
	\begin{align} 
		\frac{\mathrm{d}}{\mathrm{d}t}{{I}_{1}}(t)&=\int_{0}^{1}\!\!{\psi(x, t) \left(\hat{b}{{\psi }_{xx}(x, t)}+\int_{0}^{\infty }\!\!{g(s)}\eta _{xx}^{t}(x, s)\mathrm{d}s-k({{\phi }_{x}(x, t)}+\psi(x, t) )\right)}\mathrm{d}x+\int_{0}^{1}\!\!{{{\rho }_{2}}\psi _{t}^{2}(x, t)\mathrm{d}x}\nonumber\\
		&\quad+\int_{0}^{1}\!\!{k{{({{\phi }_{x}(x, t)}+\psi(x, t))}_{x}}w(x, t)\mathrm{d}x}+\int_{0}^{1}\!\!\psi(x, t)h(x, t)\mathrm{d}x+\int_{0}^{1}\!\!{f(x, t)w(x, t)\mathrm{d}x}+\int_{0}^{1}\!\!{{{\rho }_{1}}{{\phi }_{t}(x, t)}{{w}_{t}(x, t)}\mathrm{d}x}\nonumber \\ 
		& =-\hat{b}\int_{0}^{1}\!\!{\psi _{x}^{2}(x, t)\mathrm{d}x}-\int_{0}^{1}\!\!{{{\psi }_{x}}(x, t)\int_{0}^{\infty }\!\!{g(s)\eta _{x}^{t}(x, s)\mathrm{d}s\mathrm{d}x}}-k\int_{0}^{1}\!\!{{{\psi }^{2}(x, t)}\mathrm{d}x}+k\int_{0}^{1}\!\!{w_{x}^{2}(x, t)\mathrm{d}x}\nonumber\\
		&\quad+\int_{0}^{1}\!\!{h(x, t)\psi(x, t) \mathrm{d}x}+{{\rho }_{2}}\int_{0}^{1}\!\!{\psi _{t}^{2}(x, t)\mathrm{d}x}+\int_{0}^{1}\!\!{f(x, t)w(x, t)\mathrm{d}x}+{{\rho }_{1}}\int_{0}^{1}\!\!{{{\phi }_{t}(x, t)}{{w}_{t}(x, t)}\mathrm{d}x}.\label{eq:18}
	\end{align}
	Next, we estimate each term on the right-hand side of \eqref{eq:18}. By Friedrichs' inequality, for all $t \in {\mathbb{R}}_{>0}$, we obtain
	\begin{align}
		&\int_{0}^{1}{w^{2}(x, t)\mathrm{d}x}\leq\int_{0}^{1}{w_{x}^{2}(x, t)\mathrm{d}x}\le \int_{0}^{1}{{{\psi }^{2}(x, t)}}\mathrm{d}x\le \int_{0}^{1}{\psi _{x}^{2}(x, t)}\mathrm{d}x,\label{eq:19} \\ 
		&\int_{0}^{1}{w_{t}^{2}(x, t)\mathrm{d}x}\le \int_{0}^{1}{w_{tx}^{2}(x, t)\mathrm{d}x}\le \int_{0}^{1}{\psi _{t}^{2}(x, t)}\mathrm{d}x.\label{eq:20} 
	\end{align}
	By Young's inequality, \eqref{eq:19}, and \eqref{eq:20}, for any~${{\varepsilon }_{1}}\in {\mathbb{R}}_{>0}, {{\lambda }_{1}} \in {\mathbb{R}}_{>0}$, and $t \in {\mathbb{R}}_{>0}$, we obtain
	\begin{align*}
		{{\rho }_{1}}\int_{0}^{1}{{{\phi }_{t}(x, t)}{{w}_{t}(x, t)}\mathrm{d}x}&\le {{\varepsilon }_{1}}{{\rho }_{1}}\int_{0}^{1}{\phi _{t}^{2}(x, t)}\mathrm{d}x+\frac{{{\rho }_{1}}}{4{{\varepsilon }_{1}}}\int_{0}^{1}{\psi _{t}^{2}(x, t)}\mathrm{d}x,\\ 
		\int_{0}^{1}{h(x, t)\psi(x, t) }\mathrm{d}x&\le {{\lambda }_{1}}\int_{0}^{1}{\psi _{x}^{2}(x, t)}\mathrm{d}x+\frac{1}{4{{\lambda }_{1}}}\int_{0}^{1}{{{h}^{2}}(x, t)\mathrm{d}x}, \\ 
		\int_{0}^{1}{f(x, t)w(x, t) }\mathrm{d}x&\le {{\lambda }_{1}}\int_{0}^{1}{\psi _{x}^{2}(x, t)}\mathrm{d}x+\frac{1}{4{{\lambda }_{1}}}\int_{0}^{1}{{{f}^{2}}(x, t)\mathrm{d}x},
	\end{align*}
	and
	\begin{align*}
		-\int_{0}^{1}{{{\psi }_{x}}(x, t)\int_{0}^{\infty }{g(s)\eta _{x}^{t}(x, s)\mathrm{d}s\mathrm{d}x}}&\le \frac{1}{4{{\lambda }_{1}}}\int_{0}^{1}{{{\left(\int_{0}^{\infty }{g(s)\eta _{x}^{t}}(x, s)\mathrm{d}s\right)^{2}}}\mathrm{d}x}+{{\lambda }_{1}}\int_{0}^{1}{\psi _{x}^{2}(x, t)\mathrm{d}x}\\
		&\le\frac{{G_{0}}}{4{{\lambda }_{1}}}\int_{0}^{1}{\int_{0}^{\infty }{{{g}^{{\delta_{1}}}}(s){{\left| \eta _{x}^{t}(x, s) \right|}^{2}}\mathrm{d}s\mathrm{d}x}}+{{\lambda }_{1}}\int_{0}^{1}{\psi _{x}^{2}(x, t)\mathrm{d}x}.
	\end{align*}
	Substituting the above estimates into \eqref{eq:18}, we obtain \eqref{eq:17}.$\hfill\square$
\end{pf}
\par The following lemma provides a bound for the derivative of ${{I}_{2}}(t)$.
\begin{lemma}\label{lem:Lemma4-2} Let~$(\phi, \phi_t, \psi, \psi_t, \eta^t)$ be a solution to system \eqref{eq:sys}. Then, for any~${{\varepsilon }_{2}}\in {\mathbb{R}}_{>0}$ and $t\in \mathbb{R}_{>0}$, it holds that
	\begin{align}
		\frac{\mathrm{d}}{\mathrm{d}t}{{I}_{2}}(t)&\leq-\frac{{{\rho }_{2}}{{g}_{0}}}{2}\int_{0}^{1}\!\!{\psi _{t}^{2}(x, t)\mathrm{d}x}+{{\varepsilon }_{2}}{{{\hat{b}}}^{2}}\int_{0}^{1}\!\!{\psi _{x}^{2}(x, t)\mathrm{d}x}+\frac{1}{4{{\varepsilon }_{2}}}\int_{0}^{1}\!\!{{{h}^{2}}(x, t)\mathrm{d}x}+{{\varepsilon }_{2}}{{k}^{2}}\int_{0}^{1}\!\!{{{({{\phi }_{x}(x, t)}+\psi(x, t) )}^{2}}\mathrm{d}x}\nonumber\\
		&\quad-\frac{{{\rho }_{2}}g(0)}{2{{g}_{0}}}\int_{0}^{1}\!\!{{{\int_{0}^{\infty }\!\!{g'(s)\left| \eta _{x}^{t}(x, s) \right|^{2}}}}}\mathrm{d}s\mathrm{d}x+{G_{0}}\left(1+\frac{1}{2{{\varepsilon }_{2}}}+{{\varepsilon }_{2}}\right)\int_{0}^{1}\!\!{{{\int_{0}^{\infty }\!\!{{{g}^{{\delta_{1}}}}(s)\left| \eta _{x}^{t}(x, s) \right|^{2}}}}}\mathrm{d}s\mathrm{d}x,\label{eq:21} 
	\end{align}
	where $G_{0}$ is defined as in Lemma \ref{lem:Lemma3-6}.
\end{lemma}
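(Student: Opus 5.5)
We differentiate $I_2$ directly along (strong) solutions, substitute the equations \eqref{eq:sys02} and \eqref{eq:sys03}, and bound every resulting term by Young's inequality together with Lemma \ref{lem:Lemma3-6}. The general case then follows by density, using Proposition \ref{pro:Proposition1} to pass from strong solutions with smooth data to mild solutions. For $t>0$,
\begin{align*}
\frac{\mathrm{d}}{\mathrm{d}t}I_2(t)=-\int_0^1\rho_2\psi_{tt}\int_0^\infty g(s)\eta^t\,\mathrm{d}s\,\mathrm{d}x-\rho_2\int_0^1\psi_t\int_0^\infty g(s)\eta^t_t\,\mathrm{d}s\,\mathrm{d}x .
\end{align*}

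\textbf{Memory-transport term.} In the second term we use $\eta^t_t=\psi_t-\eta^t_s$ from \eqref{eq:sys03}. This produces the good term $-\rho_2 g_0\int_0^1\psi_t^2\,\mathrm{d}x$ plus $\rho_2\int_0^1\psi_t\int_0^\infty g\,\eta^t_s\,\mathrm{d}s\,\mathrm{d}x$. Integrating by parts in $s$, using $\eta^t(x,0)=0$ and the vanishing of $g(s)\eta^t(x,s)$ as $s\to\infty$, the latter equals $\rho_2\int_0^1\psi_t\int_0^\infty(-g'(s))\eta^t\,\mathrm{d}s\,\mathrm{d}x$.

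By Young's inequality this is at most
\begin{align*}
\frac{\rho_2g_0}{2}\int_0^1\psi_t^2\,\mathrm{d}x+\frac{\rho_2}{2g_0}\int_0^1\Big(\int_0^\infty(-g')\eta^t\,\mathrm{d}s\Big)^2\mathrm{d}x .
\end{align*}
Cauchy--Schwarz with weight $-g'\geq 0$, together with $\int_0^\infty(-g')\,\mathrm{d}s\le g(0)$, bounds the square by $g(0)\int_0^\infty(-g')|\eta^t|^2\,\mathrm{d}s$. Friedrichs' inequality in $x$ then turns this into the term $-\frac{\rho_2 g(0)}{2g_0}\int_0^1\int_0^\infty g'|\eta^t_x|^2\,\mathrm{d}s\,\mathrm{d}x$. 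What survives of the $\psi_t$ contribution is $-\frac{\rho_2g_0}{2}\int_0^1\psi_t^2\,\mathrm{d}x$.

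\textbf{Equation term.} In the first term we insert $\rho_2\psi_{tt}=\hat b\psi_{xx}+\int_0^\infty g\eta^t_{xx}\,\mathrm{d}s-k(\phi_x+\psi)+h$ from \eqref{eq:sys02}, and integrate by parts in $x$ using the Dirichlet conditions \eqref{eq:ib01}. This gives exactly four terms:
\begin{align*}
\hat b\int_0^1\psi_x\!\int_0^\infty\! g\eta^t_x\,\mathrm{d}s\,\mathrm{d}x+\int_0^1\Big(\int_0^\infty\! g\eta^t_x\,\mathrm{d}s\Big)^2\mathrm{d}x+k\int_0^1(\phi_x+\psi)\!\int_0^\infty\! g\eta^t\,\mathrm{d}s\,\mathrm{d}x-\int_0^1h\!\int_0^\infty\! g\eta^t\,\mathrm{d}s\,\mathrm{d}x .
\end{align*}
We apply Young's inequality to three of them:
\begin{itemize}
\item the $\hat b$-term is at most $\varepsilon_2\hat b^2\int_0^1\psi_x^2\,\mathrm{d}x+\frac{1}{4\varepsilon_2}\int_0^1(\int_0^\infty g\eta^t_x\,\mathrm{d}s)^2\,\mathrm{d}x$;
\item the $k$-term is at most $\varepsilon_2k^2\int_0^1(\phi_x+\psi)^2\,\mathrm{d}x+\frac{1}{4\varepsilon_2}\int_0^1(\int_0^\infty g\eta^t\,\mathrm{d}s)^2\,\mathrm{d}x$;
\item the $h$-term is at most $\frac{1}{4\varepsilon_2}\int_0^1h^2\,\mathrm{d}x+\varepsilon_2\int_0^1(\int_0^\infty g\eta^t\,\mathrm{d}s)^2\,\mathrm{d}x$.
\end{itemize}
Each squared memory integral, including the unweighted one in the second term, is then bounded by $G_0\int_0^1\int_0^\infty g^{\delta_1}|\eta^t_x|^2\,\mathrm{d}s\,\mathrm{d}x$ via \eqref{eq:8}--\eqref{eq:9}. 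Adding the coefficients gives $G_0\big(1+\frac{1}{2\varepsilon_2}+\varepsilon_2\big)$, which together with the transport estimate yields \eqref{eq:21}.

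\textbf{Main obstacle.} The delicate step is the integration by parts in $s$: one needs $\eta^t(\cdot,0)=0$ and the vanishing of the boundary term at $s=\infty$. For strong solutions this comes from $\partial_s\eta^t\in L^2_g(\mathbb R_{\ge 0};H_0^1(0,1))$ in $D(\mathcal A)$ and the monotone decay of $g$. If it is not available directly, one can truncate at $s=S$ and note that $-g(S)\int_0^1\psi_t\eta^t(x,S)\,\mathrm{d}x$ has the sign-indefinite but vanishing limit along a sequence $S\to\infty$. A smaller point is the bound $\int_0^\infty(-g')\,\mathrm{d}s\le g(0)$, which is what makes the coefficient $\frac{\rho_2 g(0)}{2g_0}$ appear.
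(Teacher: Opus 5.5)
Your proposal is correct and follows essentially the same route as the paper. It uses the same six-term expansion of $\frac{\mathrm{d}}{\mathrm{d}t}I_2$, the same Young and Lemma~\ref{lem:Lemma3-6} bounds with identical coefficients summing to $G_0\big(1+\frac{1}{2\varepsilon_2}+\varepsilon_2\big)$, and the same treatment of the transport term: integration by parts in $s$, Cauchy--Schwarz with weight $-g'$, the bound $\int_0^\infty(-g')\,\mathrm{d}s\le g(0)$, and Friedrichs' inequality. Your remarks on the vanishing boundary term at $s=\infty$ and on the density argument only make explicit justifications that the paper leaves implicit.
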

\begin{pf}Combining \eqref{eq:sys} and \eqref{eq:14}, for all $t\in \mathbb{R}_{>0}$, it follows from integration by parts that
	\begin{align}
		\frac{\mathrm{d}}{\mathrm{d}t}{{I}_{2}}(t)
		& =\hat{b}\int_{0}^{1}{{{\psi }_{x}(x, t)}}\int_{0}^{\infty }{g(s)\eta _{x}^{t}(x, s)\mathrm{d}s\mathrm{d}x}+\int_{0}^{1}{{{\left(\int_{0}^{\infty }{g(s)\eta _{x}^{t}(x, s)\mathrm{d}s}\right)^{2}}}\mathrm{d}x}-{{\rho }_{2}}{{g}_{0}}\int_{0}^{1}{\psi _{t}^{2}(x, t)}\mathrm{d}x\nonumber\\
		&\quad+k\int_{0}^{1}{({{\phi }_{x}(x, t)}+\psi(x, t) )\int_{0}^{\infty }{g(s){{\eta }^{t}}(x, s)\mathrm{d}s\mathrm{d}x}}+{{\rho }_{2}}\int_{0}^{1}{{{\psi }_{t}}(x, t)\int_{0}^{\infty }{g(s)\eta _{s}^{t}(x, s)\mathrm{d}s\mathrm{d}x}}\nonumber\\
		&\quad-\int_{0}^{1}{h(x, t)\int_{0}^{\infty }{g(s){{\eta }^{t}}(x, s)\mathrm{d}s\mathrm{d}x}}.  \label{eq:22}
	\end{align}
	Next, we estimate each term on the right-hand side of \eqref{eq:22}. By Young's inequality and Lemma \ref{lem:Lemma3-6}, for any~${{\varepsilon }_{2}}\in {\mathbb{R}}_{>0}$ and $ t \in {\mathbb{R}}_{>0}$, we obtain
	\begin{align}
		\!\int_{0}^{1}\!\!{{{\left(\!\int_{0}^{\infty }\!\!{g(s)\eta _{x}^{t}(x, s)\mathrm{d}s}\right)}^{2}}\mathrm{d}x}&\le {G_{0}}\!\int_{0}^{1}\!\!{\int_{0}^{\infty }\!\!{{{g}^{{\delta_{1}}}}(s){{\left| \eta _{x}^{t}(x, s) \right|}^{2}}\mathrm{d}s\mathrm{d}x}},\label{eq:5.7a}\\
		\hat{b}\!\int_{0}^{1}\!\!{{{\psi }_{x}(x, t)}}\!\int_{0}^{\infty }\!\!{g(s)\eta _{x}^{t}(x, s)\mathrm{d}s\mathrm{d}x}&\le {{\varepsilon }_{2}}{{{\hat{b}}}^{2}}\!\int_{0}^{1}\!\!{\psi _{x}^{2}(x, t)\mathrm{d}x}\!+\!\frac{{G_{0}}}{4{{\varepsilon }_{2}}}\!\int_{0}^{1}\!\!{\int_{0}^{\infty }\!\!{{{g}^{{\delta_{1}}}}(s){{\left| \eta _{x}^{t}(x, s) \right|}^{2}}\mathrm{d}s\mathrm{d}x}},\\ 
		-\!\int_{0}^{1}\!\!{h(x, t)\!\int_{0}^{\infty }\!\!{g(s){{\eta }^{t}}(x, s)\mathrm{d}s\mathrm{d}x}}&\le {{\varepsilon }_{2}}{G_{0}}\!\int_{0}^{1}\!\!{\int_{0}^{\infty }\!\!{{{g}^{{\delta_{1}}}}(s){{\left| \eta _{x}^{t}(x, s) \right|}^{2}}\mathrm{d}s\mathrm{d}x}}\!+\!\frac{1}{4{{\varepsilon }_{2}}}\!\int_{0}^{1}\!\!{{{h}^{2}}(x, t)\mathrm{d}x},
	\end{align}
	and 
	\begin{align}
		k \int_{0}^{1}{({{\phi }_{x}(x, t)}+\psi(x, t) )\int_{0}^{\infty }{g(s){{\eta }^{t}}(x, s)\mathrm{d}s\mathrm{d}x}}&\le {{\varepsilon }_{2}}{{k}^{2}}\int_{0}^{1}{{{({{\phi }_{x}(x, t)}+\psi(x, t) )}^{2}}\mathrm{d}x}\nonumber\\
		&\quad +\frac{{G_{0}}}{4{{\varepsilon }_{2}}}\int_{0}^{1}{\int_{0}^{\infty }{{{g}^{{\delta_{1}}}}(s){{\left| \eta _{x}^{t}(x, s) \right|}^{2}}\mathrm{d}s\mathrm{d}x}}.
	\end{align}
	By Young's inequality and  H\"{o}lder's inequality, we deduce that
	\begin{align}
		{{\rho }_{2}}\int_{0}^{1}\!\!{{{\psi }_{t}}(x, t)\!\int_{0}^{\infty }\!\!{g(s)\eta _{s}^{t}(x, s)\mathrm{d}s\mathrm{d}x}}
		&=-{{\rho }_{2}}\int_{0}^{1}\!\!{{{\psi }_{t}}(x, t)\!\int_{0}^{\infty }\!\!{g'(s){{\eta }^{t}}(x, s)\mathrm{d}s\mathrm{d}x}}\nonumber\\
		&\le \frac{{{g}_{0}}{{\rho }_{2}}}{2}\!\int_{0}^{1}\!\!{\psi _{t}^{2}(x, t)\mathrm{d}x}+\frac{{{\rho }_{2}}}{2{{g}_{0}}}\!\int_{0}^{1}\!\!{\left(\int_{0}^{\infty }\!\!{g'(s)\mathrm{d}s}\right)\left(\int_{0}^{\infty }\!\!{g'(s){{\left| {{\eta }^{t}}(x, s) \right|}^{2}}\mathrm{d}s}\right)\mathrm{d}x}\nonumber\\
		& \le \frac{{{g}_{0}}{{\rho }_{2}}}{2}\!\int_{0}^{1}\!\!{\psi _{t}^{2}(x, t)\mathrm{d}x}-\frac{{{\rho }_{2}}g(0)}{2{{g}_{0}}}\!\int_{0}^{1}\!\!{\int_{0}^{\infty }\!\!{g'(s){{\left| \eta _{x}^{t}(x, s) \right|}^{2}}\mathrm{d}s\mathrm{d}x}},\quad\forall t \in {\mathbb{R}}_{>0}. \label{eq:5.7e}
	\end{align}
	Substituting the above estimates \eqref{eq:5.7a}-\eqref{eq:5.7e} into \eqref{eq:22}, we obtain \eqref{eq:21}.$\hfill\square$
\end{pf}
\par The following lemma provides a bound for the derivative of ${J}(t)$.
\begin{lemma}\label{lem:Lemma4-3} Let~$(\phi, \phi_t, \psi, \psi_t, \eta^t)$ be a solution to system \eqref{eq:sys}. Then, for any~${{\varepsilon }_{3}}\in {\mathbb{R}}_{>0}$, it holds that for all $t\in \mathbb{R}_{>0},$
	\begin{align}
		\frac{\mathrm{d}}{\mathrm{d}t}J(t)&\le \left[{{\phi }_{x}(x, t)}\left(\hat{b}{{\psi }_{x}(x, t)}+\int_{0}^{\infty }\!\!{g(s)\eta _{x}^{t}(x, s)\mathrm{d}s}\right)\right]_{x=0}^{x=1}-k\int_{0}^{1}\!\!{{{\left({{\phi }_{x}(x, t)}+\psi(x, t) \right)}^{2}}\mathrm{d}x}+{{\rho }_{2}}\int_{0}^{1}\!\!{\psi _{t}^{2}(x, t)\mathrm{d}x}\nonumber\\
		&\quad+{{\varepsilon }_{3}}\int_{0}^{1}\!\!{\phi _{t}^{2}(x, t)\mathrm{d}x}+{{\varepsilon }_{3}}\int_{0}^{1}\!\!{{{\left({{\phi }_{x}(x, t)}+\psi(x, t) \right)}^{2}}\mathrm{d}x}+\frac{{\hat{b}}}{k}{{\varepsilon }_{3}}\int_{0}^{1}\!\!{\psi _{x}^{2}(x, t)\mathrm{d}x}+\left(\frac{1}{4k{{\varepsilon }_{3}}}+\frac{{\hat{b}}}{4k{{\varepsilon }_{3}}}\right)\int_{0}^{1}\!\!{{{f}^{2}}(x, t)\mathrm{d}x}\nonumber\\
		&\quad+\frac{1}{4{\varepsilon }_{3}}\int_{0}^{1}\!\!{{{h}^{2}}(x, t)\mathrm{d}x}-g(0)\frac{\rho _{1}^{2}}{4{{\varepsilon }_{3}}{{k}^{2}}}\int_{0}^{1}\!\!{\int_{0}^{\infty }\!\!{g'(s){{\left| \eta _{x}^{t}(x, s) \right|}^{2}}\mathrm{d}s\mathrm{d}x}}+\frac{1}{k}{{\varepsilon }_{3}}{G_{0}}\int_{0}^{1}\!\!{\int_{0}^{\infty }\!\!{{{g}^{{\delta_{1}}}}(s){{\left| \eta _{x}^{t}(x, s) \right|}^{2}}\mathrm{d}s\mathrm{d}x}}, \label{eq:23}
	\end{align}
	where $G_{0}$ is defined as in Lemma \ref{lem:Lemma3-6}.
\end{lemma}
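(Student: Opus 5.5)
Differentiate $J(t)$ directly, substitute the equations of system \eqref{eq:sys}, and then dispose of the resulting terms one by one with integration by parts, the wave-speed identity of Assumption~\ref{ass:Assumption3}, Young's inequality and Lemma~\ref{lem:Lemma3-6}.

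\textbf{Step 1 (differentiate).} Write $J=J_1+J_2+J_3$ following \eqref{eq:15}. For $J_1$ the product rule gives
\[
J_1'=\rho_2\int_0^1\psi_{tt}(\phi_x+\psi)\,\mathrm{d}x+\rho_2\int_0^1\psi_t(\phi_{xt}+\psi_t)\,\mathrm{d}x .
\]
Substituting \eqref{eq:sys02}, $\rho_2\psi_{tt}=\hat b\psi_{xx}+\int_0^\infty g\eta^t_{xx}\,\mathrm{d}s-k(\phi_x+\psi)+h$, produces the good term $-k\int_0^1(\phi_x+\psi)^2\,\mathrm{d}x$ and the term $\rho_2\int_0^1\psi_t^2\,\mathrm{d}x$. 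It also produces $\int_0^1(\phi_x+\psi)\big(\hat b\psi_{xx}+\int_0^\infty g\eta^t_{xx}\,\mathrm{d}s\big)\mathrm{d}x$, which we integrate by parts in $x$. This creates the boundary term $\big[\phi_x(\hat b\psi_x+\int_0^\infty g\eta^t_x\,\mathrm{d}s)\big]_0^1$; the contribution from $\psi$ vanishes because $\psi(0)=\psi(1)=0$. What remains is $-\int_0^1\phi_{xx}\big(\hat b\psi_x+\int_0^\infty g\eta_x^t\big)$ together with $-\int_0^1\psi_x(\hat b\psi_x+\int_0^\infty g\eta^t_x)$. The latter term I expect to cancel, possibly only partially, against the $\hat b\psi_x^2$-type contributions arising elsewhere, or else to be absorbed. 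The term $\rho_2\int_0^1\psi_t\phi_{xt}\,\mathrm{d}x$ must be kept.

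For $J_2$ and $J_3$ we use $\rho_1\phi_{tt}=k(\phi_x+\psi)_x+f$ from \eqref{eq:sys01}. In $J_3$ we also use $\eta^t_t=\psi_t-\eta^t_s$ from \eqref{eq:sys03}.

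\textbf{Step 2 (cancellation via equal wave speeds).} The product $\frac{\rho_1}{k}\phi_{tt}\big(\hat b\psi_x+\int_0^\infty g\eta^t_x\big)$ equals $(\phi_{xx}+\psi_x)\big(\hat b\psi_x+\int g\eta^t_x\big)+\frac1k f(\cdots)$. Its $\phi_{xx}$ part cancels exactly the $-\int\phi_{xx}(\cdots)$ term from Step~1, and its $\psi_x$ part cancels $-\int\psi_x(\cdots)$.

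Next, $\frac{\rho_1\hat b}{k}\int_0^1\psi_{xt}\phi_t\,\mathrm{d}x=-\frac{\rho_1\hat b}{k}\int_0^1\psi_t\phi_{xt}\,\mathrm{d}x$. Here we need $\frac{\rho_1 \hat b}{k}$ against $\rho_2$. The identity $\rho_1/k=\rho_2/b$ gives $\rho_2-\frac{\rho_1\hat b}{k}=\frac{\rho_1}{k}g_0$. The same $\psi_t$ contribution appears in $J_3$: $\frac{\rho_1}{k}\int\phi_t\int g\,\psi_{xt}\,\mathrm{d}s=\frac{\rho_1 g_0}{k}\int\phi_t\psi_{xt}$, which after integration by parts is $-\frac{\rho_1 g_0}{k}\int\phi_{xt}\psi_t$. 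Together these cancel $\rho_2\int\psi_t\phi_{xt}$ exactly. This cancellation is precisely where Assumption~\ref{ass:Assumption3} enters, and I expect it to be the delicate bookkeeping step.

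\textbf{Step 3 (estimate the remainders).} The remaining term is
\[
-\frac{\rho_1}{k}\int_0^1\phi_t\int_0^\infty g\,\eta^t_{xs}\,\mathrm{d}s\,\mathrm{d}x=\frac{\rho_1}{k}\int_0^1\phi_t\int_0^\infty g'\eta^t_x\,\mathrm{d}s\,\mathrm{d}x,
\]
where we integrated by parts in $s$, used $\eta^t(\cdot,0)=0$, and used that $g\eta^t_x\to0$ as $s\to\infty$. By Young's and Cauchy--Schwarz inequalities, with $\int_0^\infty(-g')\,\mathrm{d}s\le g(0)$, it is bounded by
\[
\varepsilon_3\int_0^1\phi_t^2\,\mathrm{d}x-\frac{\rho_1^2 g(0)}{4\varepsilon_3k^2}\int_0^1\int_0^\infty g'|\eta^t_x|^2\,\mathrm{d}s\,\mathrm{d}x .
\]
The disturbance terms are handled as follows. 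The term $\int h(\phi_x+\psi)$ is bounded by $\varepsilon_3\int(\phi_x+\psi)^2+\frac1{4\varepsilon_3}\int h^2$. The term $\frac1k\int f\big(\hat b\psi_x+\int g\eta_x^t\big)$ is bounded by $\frac{\hat b}{k}\varepsilon_3\int\psi_x^2+\frac{\varepsilon_3}{k}G_0\int\!\int g^{\delta_1}|\eta^t_x|^2+\big(\frac{\hat b}{4k\varepsilon_3}+\frac1{4k\varepsilon_3}\big)\int f^2$, using Lemma~\ref{lem:Lemma3-6}. Collecting everything yields \eqref{eq:23}.

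The main obstacle is verifying the exact cancellations in Step~2 with the correct constants, and justifying the vanishing of the $s$-boundary terms (working with strong solutions from Proposition~\ref{pro:Proposition1} and then using density).
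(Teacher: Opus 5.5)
Your proposal is correct and follows essentially the paper's own argument. You differentiate $J$, substitute \eqref{eq:sys01}--\eqref{eq:sys03}, integrate by parts to obtain the boundary flux term and the exact cancellation of the $(\phi_{xx}+\psi_x)\bigl(\hat b\psi_x+\int_0^\infty g\eta^t_x\,\mathrm{d}s\bigr)$ terms, use $\rho_1 b/k=\rho_2$ to remove the $\phi_t\psi_{xt}$ terms, rewrite $\int_0^\infty g\eta^t_{xs}\,\mathrm{d}s$ as $-\int_0^\infty g'\eta^t_x\,\mathrm{d}s$, and close with Young's inequality and Lemma~\ref{lem:Lemma3-6}. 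The only difference is cosmetic, namely grouping $J_2+J_3=\frac{\rho_1}{k}\int_0^1\phi_t\bigl(\hat b\psi_x+\int_0^\infty g\eta^t_x\,\mathrm{d}s\bigr)\mathrm{d}x$, and as your Step~2 confirms, the hedge in Step~1 about the $\psi_x$ term is unnecessary because it cancels exactly.
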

\begin{pf}Combining \eqref{eq:sys} and \eqref{eq:15}, for all $t\in \mathbb{R}_{>0}$, it follows from integration by parts that
	\begin{align}
		\frac{\mathrm{d}}{\mathrm{d}t}J(t)&=\!\int_{0}^{1}\!\!\!{\left({{\phi }_{x}(x, t)}\!+\!\psi(x, t) \right)\left(\hat{b}{{\psi }_{xx}(x, t)}\!+\!\int_{0}^{\infty }\!\!\!{g(s)\eta _{xx}^{t}(x, s)\mathrm{d}s}\right)}\mathrm{d}x \!+\!\frac{{{\rho }_{1}}\hat{b}}{k}\!\int_{0}^{1}\!\!\!{{{\psi }_{xt}(x, t)}{{\phi }_{t}(x, t)}\mathrm{d}x}\nonumber\\
		&\quad-\!\int_{0}^{1}\!\!\!{\left({{\phi }_{x}(x, t)}+\psi(x, t) \right)}\left({k\left({{\phi }_{x}(x, t)}\!+\!\psi(x, t) \right)\!-\! h(x, t)}\right)\mathrm{d}x \!+\!{{\rho }_{2}}\!\int_{0}^{1}\!\!\!{{{\psi }_{t}(x, t)}{{\left({{\phi }_{x}(x, t)}\!+\!\psi(x, t) \right)}_{t}}}\mathrm{d}x\nonumber\\
		&\quad+\!\frac{{\hat{b}}}{k}\!\int_{0}^{1}\!\!\!{{{\psi }_{x}(x, t)}(k{{({{\phi }_{x}(x, t)}\!+\!\psi(x, t) )}_{x}}\!+\! f(x, t))\mathrm{d}x}\!+\!\int_{0}^{1}\!\!\!{{{\left({{\phi }_{x}(x, t)}\!+\!\psi(x, t) \right)}_{x}}\!\int_{0}^{\infty }\!\!\!{g(s)\eta _{x}^{t}(x, s)\mathrm{d}s\mathrm{d}x}}\nonumber\\
		&\quad +\!\frac{{{\rho }_{1}}}{k}\!\int_{0}^{1}\!\!\!{{{\phi }_{t}}(x, t)\!\int_{0}^{\infty }\!\!\!{g(s)\eta _{xt}^{t}(x, s)\mathrm{d}s\mathrm{d}x}}\!+\!\frac{1}{k}\!\int_{0}^{1}\!\!\!{f(x, t)\!\int_{0}^{\infty }\!\!\!{g(s)\eta _{x}^{t}(x, s)\mathrm{d}s\mathrm{d}x}}.\label{eq:24}
	\end{align}
	We first simplify each term on the right-hand side of \eqref{eq:24}. For all~$t\in \mathbb{R}_{>0}$, we obtain
	\begin{align}
		\hat{b}\!\int_{0}^{1}\!\!\left({{\phi }_{x}(x, t)}\!+\!\psi(x, t) \right){{\psi }_{xx}(x, t)}\mathrm{d}x \!&=\!\left[\hat{b}{\phi }_{x}(x, t){\psi }_{x}(x, t)\right]_{x=0}^{x=1} \!-\!\hat{b}\!\int_{0}^{1}\!\!{\left({{\phi }_{x}(x, t)}\!+\!\psi(x, t)\right)_{x}\psi_{x}(x, t)}\mathrm{d}x,\label{eq:5.24a}\\
		\frac{{{\rho }_{1}}}{k}\!\int_{0}^{1}\!\!{{{\phi }_{t}}(x, t)\!\!\int_{0}^{\infty }\!\!\!{g(s)\eta _{xt}^{t}(x, s)\mathrm{d}s\mathrm{d}x}}\!&=\!\frac{{{\rho }_{1}}}{k}g_{0}\!\int_{0}^{1}\!\!{{\phi }_{t}(x, t)\psi_{xt}(x, t)}\mathrm{d}x \!+\!\frac{{{\rho }_{1}}}{k}\!\int_{0}^{1}\!\!{{\phi }_{t}(x, t)\!\!\int_{0}^{\infty }\!\!\! g'(s)\eta _{x}^{t}(x, s)\mathrm{d}s}\mathrm{d}x,\label{eq:5.25a}\\
		{{\rho }_{2}}\!\int_{0}^{1}\!\!{{{\psi }_{t}(x, t)}{{\left({{\phi }_{x}(x, t)}\!+\!\psi(x, t) \right)}_{t}}}\mathrm{d}x \! &=\!{\rho }_{2}\!\int_{0}^{1}\!\!{{\psi }_{t}^{2}(x, t)}\mathrm{d}x \!-\!{\rho }_{2}\!\int_{0}^{1}\!\!{{\psi }_{xt}(x, t){\phi }_{t}(x, t)}\mathrm{d}x,
	\end{align}
	and
	\begin{align}
		\int_{0}^{1}{\left({{\phi }_{x}(x, t)}+\psi(x, t) \right)\int_{0}^{\infty }{g(s)\eta _{xx}^{t}(x, s)\mathrm{d}s}}\mathrm{d}x&=-\int_{0}^{\infty }{g(s)\int_{0}^{1}\left({{\phi }_{x}(x, t)}+\psi(x, t)\right)_{x}\eta _{x}^{t}(x, s)\mathrm{d}x\mathrm{d}s}\nonumber\\
		&\quad +\left[{\phi }_{x}(x, t)\int_{0}^{\infty }{g(s)\eta _{x}^{t}(x, s)\mathrm{d}s}\right]_{x=0}^{x=1}.
	\end{align}
	By Assumption \ref{ass:Assumption3}, it follows that
	\begin{align}
		\left(\frac{{{\rho }_{1}}\hat{b}}{k}+\frac{{{\rho }_{1}}}{k}g_{0}-{\rho }_{2}\right)\int_{0}^{1}{{\psi }_{xt}(x, t){\phi }_{t}(x, t)}\mathrm{d}x=0,\quad \forall t\in \mathbb{R}_{>0}.\label{eq:5.24e}
	\end{align}
	Substituting the above simplifications \eqref{eq:5.24a}-\eqref{eq:5.24e} into \eqref{eq:24}, for all $t\in \mathbb{R}_{>0}$, we obtain
	\begin{align}
		\frac{\mathrm{d}}{\mathrm{d}t}J(t)&=\left[\hat{b}{\phi }_{x}(x, t){\psi }_{x}(x, t)\right]_{x=0}^{x=1}\!+\left[{\phi }_{x}(x, t)\!\int_{0}^{\infty }\!{g(s)\eta _{x}^{t}(x, s)\mathrm{d}s}\right]_{x=0}^{x=1}\!+{{\rho }_{2}}\!\int_{0}^{1}\!{{\psi }_{t}^{2}(x, t)}\mathrm{d}x-k \!\int_{0}^{1}\!{\left({{\phi }_{x}(x, t)}+\psi(x, t)\right)^{2}}\mathrm{d}x\nonumber\\
		&\quad+\frac{{{\rho }_{1}}}{k}\!\int_{0}^{1}\!{{\phi }_{t}(x, t)\!\int_{0}^{\infty }\! g'(s)\eta _{x}^{t}(x, s)\mathrm{d}s}\mathrm{d}x+\!\int_{0}^{1}\!{\left({{\phi }_{x}(x, t)}+\psi(x, t)\right)h(x, t)}\mathrm{d}x+\frac{1}{k}\!\int_{0}^{1}\!{f(x, t)\!\int_{0}^{\infty }\!{g(s)\eta _{x}^{t}(x, s)\mathrm{d}s\mathrm{d}x}}\nonumber\\
		&\quad +\frac{{\hat{b}}}{k}\!\int_{0}^{1}\!{{{\psi }_{x}(x, t)}f(x, t)}\mathrm{d}x.\label{eq:25}
	\end{align}
	Subsequently, we estimate each term on the right-hand side of \eqref{eq:25}. By Young's inequality, H\"{o}lder's inequality, and Lemma \ref{lem:Lemma3-6}, for any~${{\varepsilon }_{3}} \in {\mathbb{R}}_{>0}$ and $t \in {\mathbb{R}}_{>0}$, we obtain
	\begin{align*}
		\frac{{\hat{b}}}{k}\int_{0}^{1}{{{\psi }_{x}(x, t)}f(x, t)}\mathrm{d}x&\leq\frac{{\hat{b}}}{k}{{\varepsilon }_{3}}\int_{0}^{1}{{{\psi }_{x}}^{2}(x, t)}\mathrm{d}x+\frac{{\hat{b}}}{4k{\varepsilon }_{3}}\int_{0}^{1}{f^{2}(x, t)}\mathrm{d}x,\\
		\int_{0}^{1}{\left({{\phi }_{x}(x, t)}+\psi(x, t)\right)h(x, t)}\mathrm{d}x &\leq{\varepsilon }_{3}\int_{0}^{1}\left({{\phi }_{x}(x, t)}+\psi(x, t)\right)^{2}\mathrm{d}x +\frac{1}{4{\varepsilon }_{3}}\int_{0}^{1} h^{2}(x, t)\mathrm{d}x,\\
		\frac{1}{k}\int_{0}^{1}{f(x, t)\int_{0}^{\infty }{g(s)\eta _{x}^{t}(x, s)\mathrm{d}s\mathrm{d}x}}&\leq\frac{{\varepsilon }_{3}}{k}G_{0}\int_{0}^{1}{\int_{0}^{\infty }{{{g}^{{\delta_{1}}}}(s){{\left| \eta _{x}^{t}(x, s) \right|}^{2}}\mathrm{d}s\mathrm{d}x}}+\frac{1}{4k{\varepsilon }_{3}}\int_{0}^{1}{f^{2}(x, t)}\mathrm{d}x,
	\end{align*}
	and
	\begin{align*}
		\frac{{{\rho }_{1}}}{k}\int_{0}^{1}{{\phi }_{t}(x, t)\int_{0}^{\infty }g'(s)\eta _{x}^{t}(x, s)\mathrm{d}s}\mathrm{d}x&\leq-g(0)\frac{{\rho }_{1}^{2}}{4{\varepsilon }_{3}k^{2}}\int_{0}^{1}{\int_{0}^{\infty }g'(s){\left| \eta _{x}^{t}(x, s) \right|}^{2}\mathrm{d}s}\mathrm{d}x+{{\varepsilon }_{3}}\int_{0}^{1}{{\phi }_{t}^{2}(x, t)}\mathrm{d}x.
	\end{align*}
	Substituting the above estimates into \eqref{eq:25}, we obtain \eqref{eq:23}.$\hfill\square$
\end{pf}

\par The following lemma provides an estimate for the boundary terms appearing in inequality \eqref{eq:23}. To facilitate the handling of boundary terms, we introduce the linear function ~$q(x)=2-4x$ defined on~$[0,1]$, which takes values of equal magnitude and opposite signs at the two endpoints, providing a symmetric treatment of the boundary terms.
\begin{lemma}\label{lem:Lemma4-4} Let~$(\phi, \phi_t, \psi, \psi_t, \eta^t)$ be a solution to system \eqref{eq:sys}. Then, for any~${{\varepsilon }_{3}}$, ${{\lambda }_{2}}\in {\mathbb{R}}_{>0}$, it holds that for all $t\in \mathbb{R}_{>0}$,
	\begin{align}
		&\quad\left[{{\phi }_{x}(x, t)}\left(\hat{b}{{\psi }_{x}(x, t)}+\!\int_{0}^{\infty }\!{g(s)\eta _{x}^{t}}(x, s)\mathrm{d}s\right)\right]_{x=0}^{x=1}\nonumber\\
		&\le -\frac{{{\varepsilon }_{3}}}{k}\frac{\mathrm{d}}{\mathrm{d}t}\!\int_{0}^{1}\!{{{\rho }_{1}}q(x){{\phi }_{t}(x, t)}{{\phi }_{x}(x, t)}\mathrm{d}x}-\frac{g(0){{\rho }_{2}}}{4\varepsilon _{3}^{2}}\!\int_{0}^{1}\!{\int_{0}^{\infty }\!{g'(s){{\left| \eta _{x}^{t}(x, s) \right|}^{2}}\mathrm{d}s\mathrm{d}x}}+\frac{{{k}^{2}}{{\lambda }_{2}}}{{{\varepsilon }_{3}}}\!\int_{0}^{1}\!{{{({{\phi }_{x}(x, t)}+\psi(x, t) )}^{2}}\mathrm{d}x}\nonumber\\
		&\quad -\frac{1}{4{{\varepsilon }_{3}}}\frac{\mathrm{d}}{\mathrm{d}t}\!\int_{0}^{1}\!{{{\rho }_{2}}q(x){{\psi }_{t}(x, t)}\left(\hat{b}{{\psi }_{x}(x, t)}+\!\int_{0}^{\infty }\!{g(s)\eta _{x}^{t}}(x, s)\mathrm{d}s\right)\mathrm{d}x}+\left(\frac{{{\rho }_{2}}}{4}+\frac{{{\rho }_{2}}\left(\hat{b}+{{g}_{0}}\right)}{2{{\varepsilon }_{3}}}\right)\!\int_{0}^{1}\!{\psi _{t}^{2}(x, t)\mathrm{d}x}\nonumber\\
		&\quad +\frac{1}{{{\varepsilon }_{3}}}\left(\varepsilon _{3}^{2}+{{{\hat{b}}}^{2}}+\frac{{{{\hat{b}}}^{2}}}{8{{\lambda }_{2}}}+2{{{\hat{b}}}^{2}}{{\varepsilon }_{3}}\right)\!\int_{0}^{1}\!{\psi _{x}^{2}(x, t)\mathrm{d}x}+\frac{2{{\rho }_{1}}}{k}{{\varepsilon }_{3}}\!\int_{0}^{1}\!{\phi _{t}^{2}(x, t)\mathrm{d}x}+4{{\varepsilon }_{3}}\!\int_{0}^{1}\!{\phi _{x}^{2}(x, t)\mathrm{d}x}\nonumber\\
		&\quad+\left(\frac{{G_{0}}}{{{\varepsilon }_{3}}}+\frac{{G_{0}}}{8{{\varepsilon }_{3}}{{\lambda }_{2}}}+2{G_{0}}\right)\!\int_{0}^{1}\!{\int_{0}^{\infty }{{{g}^{{\delta_{1}}}}(s){{\left| \eta _{x}^{t}(x, s) \right|}^{2}}\mathrm{d}s\mathrm{d}x}}+\frac{1}{16{{\varepsilon }_{3}^{2}}}\!\int_{0}^{1}\!{{{h}^{2}}(x, t)\mathrm{d}x}+\frac{{{\varepsilon }_{3}}}{{{k}^{2}}}\!\int_{0}^{1}\!{{{f}^{2}}(x, t)\mathrm{d}x}, \label{eq:26}
	\end{align}
	where $G_{0}$ is defined as in Lemma \ref{lem:Lemma3-6}.
\end{lemma}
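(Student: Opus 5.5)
The boundary term is handled with the multiplier $q(x)=2-4x$, which satisfies $q(1)=-2$, $q(0)=2$ and $q_x=-4$. Set $S(x,t)=\hat{b}\psi_x(x,t)+\int_0^\infty g(s)\eta^t_x(x,s)\,\mathrm{d}s$. Young's inequality gives
\[
\big[\phi_x S\big]_{x=0}^{x=1}\le \varepsilon_3\big(\phi_x^2(1,t)+\phi_x^2(0,t)\big)+\frac{1}{4\varepsilon_3}\big(S^2(1,t)+S^2(0,t)\big).
\]
Each endpoint sum is then written as $-\frac12[q\,\cdot\,]_{0}^{1}$ and converted into interior integrals by the fundamental theorem of calculus, using the identity
\[
-\tfrac12[qY]_0^1=-\tfrac12\int_0^1 (qY)_x\,\mathrm{d}x.
\]

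For the $\phi_x^2$ part,
\[
\phi_x^2(1,t)+\phi_x^2(0,t)=-\tfrac12\int_0^1(q\phi_x^2)_x\,\mathrm{d}x=2\int_0^1\phi_x^2\,\mathrm{d}x-\int_0^1 q\phi_x\phi_{xx}\,\mathrm{d}x.
\]
Equation \eqref{eq:sys01} gives $k\phi_{xx}=\rho_1\phi_{tt}-k\psi_x-f$. The $\phi_{tt}$ piece becomes
\[
-\tfrac{\rho_1}{k}\frac{\mathrm{d}}{\mathrm{d}t}\int_0^1 q\phi_t\phi_x\,\mathrm{d}x+\tfrac{\rho_1}{k}\int_0^1 q\phi_t\phi_{xt}\,\mathrm{d}x.
\]
The last integral equals $\frac{\rho_1}{2k}\int_0^1 q(\phi_t^2)_x\,\mathrm{d}x=\frac{2\rho_1}{k}\int_0^1\phi_t^2\,\mathrm{d}x$, because $\phi_t$ vanishes at the boundary. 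The remaining terms $\int_0^1 q\phi_x\psi_x\,\mathrm{d}x$ and $\frac1k\int_0^1 q\phi_xf\,\mathrm{d}x$ are bounded by Young's inequality, with $|q|\le 2$, by multiples of $\int_0^1\phi_x^2$, $\int_0^1\psi_x^2$ and $\int_0^1 f^2$. Multiplying by $\varepsilon_3$ produces the terms $-\frac{\varepsilon_3}{k}\frac{\mathrm{d}}{\mathrm{d}t}\int_0^1\rho_1 q\phi_t\phi_x\,\mathrm{d}x$, $\frac{2\rho_1}{k}\varepsilon_3\int_0^1\phi_t^2\,\mathrm{d}x$, $4\varepsilon_3\int_0^1\phi_x^2\,\mathrm{d}x$, $\frac{\varepsilon_3}{k^2}\int_0^1 f^2\,\mathrm{d}x$ and a $\varepsilon_3\int_0^1\psi_x^2\,\mathrm{d}x$ contribution in \eqref{eq:26}. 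The split in Young's inequality should be chosen so that these coefficients match exactly.

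For the $S^2$ part,
\[
S^2(1,t)+S^2(0,t)=2\int_0^1 S^2\,\mathrm{d}x-\int_0^1 qSS_x\,\mathrm{d}x.
\]
Expanding $S^2$ and applying Lemma \ref{lem:Lemma3-6} gives the $\hat b^2\int_0^1\psi_x^2$ and $G_0\int_0^1\int_0^\infty g^{\delta_1}|\eta^t_x|^2$ contributions. Equation \eqref{eq:sys02} gives
\[
S_x=\rho_2\psi_{tt}+k(\phi_x+\psi)-h.
\]
The $\psi_{tt}$ piece yields $-\frac{\mathrm{d}}{\mathrm{d}t}\int_0^1\rho_2 q\psi_tS\,\mathrm{d}x+\rho_2\int_0^1 q\psi_tS_t\,\mathrm{d}x$, and the first of these supplies the second time-derivative term in \eqref{eq:26}. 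To evaluate $S_t$, I use \eqref{eq:sys03} in the form $\eta^t_{xt}=\psi_{xt}-\eta^t_{xs}$ and integrate by parts in $s$, using $\eta^t(\cdot,0)=0$. This gives
\[
S_t=(\hat b+g_0)\psi_{xt}+\int_0^\infty g'(s)\eta^t_x\,\mathrm{d}s.
\]
The term $(\hat b+g_0)\rho_2\int_0^1 q\psi_t\psi_{xt}\,\mathrm{d}x$ equals $2\rho_2(\hat b+g_0)\int_0^1\psi_t^2\,\mathrm{d}x$ by the same boundary argument as before. The memory term is handled by Cauchy--Schwarz together with $\int_0^\infty(-g')\,\mathrm{d}s=g(0)$. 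This produces $-g(0)\int_0^1\int_0^\infty g'|\eta^t_x|^2$ against $\int_0^1\psi_t^2$, with weight $\varepsilon_3$ arranged so that after dividing by $4\varepsilon_3$ the coefficients $\frac{\rho_2}{4}$ and $-\frac{g(0)\rho_2}{4\varepsilon_3^2}$ appear. The coupling $k\int_0^1 qS(\phi_x+\psi)\,\mathrm{d}x$ is split with parameter $\lambda_2$, giving $\frac{k^2\lambda_2}{\varepsilon_3}\int_0^1(\phi_x+\psi)^2\,\mathrm{d}x$ plus $\frac{1}{8\lambda_2}$ multiples of $\hat b^2\int_0^1\psi_x^2$ and of the $G_0$-weighted memory term. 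Finally, $\int_0^1 qSh\,\mathrm{d}x$ is split to give $\frac{1}{16\varepsilon_3^2}\int_0^1 h^2\,\mathrm{d}x$ plus a $\hat b^2\int_0^1\psi_x^2$ term and a $G_0$-weighted memory term.

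All of the steps above are routine. The main obstacle is the bookkeeping: the $\varepsilon$-splits in each Young inequality must be chosen so that every coefficient matches \eqref{eq:26} exactly. In particular, the weights in the $\int_0^1 q\psi_t\int_0^\infty g'\eta^t_x\,\mathrm{d}s\,\mathrm{d}x$ term and in the $S^2$ expansion have to land as $\frac{\rho_2}{4}$ and $\frac{1}{\varepsilon_3}(\varepsilon_3^2+\hat b^2+\frac{\hat b^2}{8\lambda_2}+2\hat b^2\varepsilon_3)$. If these constants do not match exactly, any admissible choice still gives an estimate of the same structure, and that is all the later Lyapunov argument requires.
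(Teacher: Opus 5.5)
Your proposal is correct and follows the paper's proof essentially step for step. Both use the same Young split of the boundary product and the same multiplier $q(x)=2-4x$; your fundamental-theorem-of-calculus identities are just the paper's computations of $\frac{\mathrm{d}}{\mathrm{d}t}\int_0^1\rho_2 q\psi_t S\,\mathrm{d}x$ and $\frac{\mathrm{d}}{\mathrm{d}t}\int_0^1\rho_1 q\phi_t\phi_x\,\mathrm{d}x$ rearranged, and the $s$-integration by parts for $S_t$ and the Cauchy--Schwarz bound with $\int_0^\infty(-g')\,\mathrm{d}s=g(0)$ are also the same. The natural splits ($2|\phi_x\psi_x|\le\phi_x^2+\psi_x^2$, $\frac{2}{k}|\phi_x f|\le\phi_x^2+\frac{1}{k^2}f^2$, $S^2\le 2\hat b^2\psi_x^2+2\bigl(\int_0^\infty g\,\eta^t_x\,\mathrm{d}s\bigr)^2$ with Lemma \ref{lem:Lemma3-6}) give exactly the constants in \eqref{eq:26}, so your closing hedge is unnecessary.
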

\begin{pf}By Young's inequality, for any~${{\varepsilon }_{3}}\in {\mathbb{R}}_{>0}$ and $t\in {\mathbb{R}}_{>0}$, we obtain:
	\begin{align}
		\left[{{\phi }_{x}(x, t)}\left(\hat{b}{{\psi }_{x}(x, t)}\!+\!\int_{0}^{\infty }\!\!\!{g(s)\eta _{x}^{t}}(x, s)\mathrm{d}s\right)\right]_{x=0}^{x=1}&\leq{{\varepsilon }_{3}}\left[{{\phi }_{x}^{2}(1, t)}\!+\!{{\phi }_{x}^{2}(0, t)}\right]\!+\!\frac{1}{4{{\varepsilon }_{3}}}{\left(\hat{b}{{\psi }_{x}(0, t)}\!+\!\int_{0}^{\infty }\!\!\!{g(s)\eta _{x}^{t}(0, s)}\mathrm{d}s\right)}^{2}\nonumber\\
		&\quad +\!\frac{1}{4{{\varepsilon }_{3}}}{\left(\hat{b}{{\psi }_{x}(1, t)}\!+\!\int_{0}^{\infty }\!\!\!{g(s)\eta _{x}^{t}(1, s)}\mathrm{d}s\right)}^{2}.\label{eq:32}
	\end{align}
	We define the following functionals:
	\begin{align}
		&J_1(t)=\int_{0}^{1}{{{\rho }_{2}}q(x){{\psi }_{t}(x, t)}\left(\hat{b}{{\psi }_{x}(x, t)}+\int_{0}^{\infty }{g(s)\eta _{x}^{t}}(x, s)\mathrm{d}s\right)\mathrm{d}x},\quad\forall t \in {\mathbb{R}}_{\geq0},\label{eq:27}\\
		&J_2(t)=\int_{0}^{1}{{{\rho }_{1}}q(x){{\phi }_{t}(x, t)}{{\phi }_{x}(x, t)}\mathrm{d}x},\quad\forall t \in {\mathbb{R}}_{\geq0}.\label{eq:28}
	\end{align}
	Combining \eqref{eq:sys} and \eqref{eq:27}, for all $t\in \mathbb{R}_{>0}$, it follows from integration by parts that
	\begin{align*}
		\frac{\mathrm{d}}{\mathrm{d}t}J_1(t)
		&=\int_{0}^{1}{q(x)\left(\hat{b}{{\psi }_{xx}(x, t)}+\int_{0}^{\infty }{g(s)\eta _{xx}^{t}}(x, s)\mathrm{d}s\right)\left(\hat{b}{{\psi }_{x}(x, t)}+\int_{0}^{\infty }{g(s)\eta _{x}^{t}}(x, s)\mathrm{d}s\right)\mathrm{d}x}\\
		&\quad-\int_{0}^{1}{q(x)\left(k\left({{\phi }_{x}(x, t)}+\psi(x, t) \right)- h(x, t)\right)\left(\hat{b}{{\psi }_{x}(x, t)}+\int_{0}^{\infty }{g(s)\eta _{x}^{t}}(x, s)\mathrm{d}s\right)\mathrm{d}x}\\
		&\quad+\int_{0}^{1}{{{\rho }_{2}}q(x){{\psi }_{t}(x, t)}\left(\hat{b}{{\psi }_{xt}(x, t)}+\int_{0}^{\infty }{g(s)\eta _{xt}^{t}}(x, s)\mathrm{d}s\right)\mathrm{d}x} \\ 
		& =\left[\frac{q(x)}{2}{{\left(\hat{b}{{\psi }_{x}(x, t)}+\int_{0}^{\infty }{g(s)\eta _{x}^{t}}(x, s)\mathrm{d}s\right)^{2}}}\right]_{x=0}^{x=1}+{{\rho }_{2}}\int_{0}^{1}{q(x){{\psi }_{t}(x, t)}\int_{0}^{\infty }{g'(s)\eta _{x}^{t}(x, s)\mathrm{d}s\mathrm{d}x}}\\
		&\quad+2\int_{0}^{1}{{{\left(\hat{b}{{\psi }_{x}(x, t)}+\int_{0}^{\infty }{g(s)\eta _{x}^{t}}(x, s)\mathrm{d}s\right)^{2}}}\mathrm{d}x}+2{{\rho }_{2}}\left(\hat{b}+{{g}_{0}}\right)\int_{0}^{1}{\psi _{t}^{2}(x, t)\mathrm{d}x}\\
		&\quad+\int_{0}^{1}{q(x)\left(h(x, t)- k\left({{\phi }_{x}(x, t)}+\psi(x, t) \right)\right)\left(\hat{b}{{\psi }_{x}(x, t)}+\int_{0}^{\infty }{g(s)\eta _{x}^{t}}(x, s)\mathrm{d}s\right)\mathrm{d}x}. 
	\end{align*}
	Then, for all~$t\in \mathbb{R}_{>0}$, it follows that
	\begin{align*}
		&\quad{\left(\hat{b}{{\psi }_{x}(0, t)}+\!\int_{0}^{\infty }\!\!{g(s)\eta _{x}^{t}(0, s)}\mathrm{d}s\right)}^{2}+{\left(\hat{b}{{\psi }_{x}(1, t)}+\!\int_{0}^{\infty }\!\!{g(s)\eta _{x}^{t}(1, s)}\mathrm{d}s\right)}^{2}\nonumber\\
		&=-\frac{\mathrm{d}}{\mathrm{d}t}\!\int_{0}^{1}\!\!{{{\rho }_{2}}q(x){{\psi }_{t}(x, t)}\left(\hat{b}{{\psi }_{x}(x, t)}+\!\int_{0}^{\infty }\!\!{g(s)\eta _{x}^{t}}(x, s)\mathrm{d}s\right)\mathrm{d}x}+2\!\int_{0}^{1}\!\!{{{\left(\hat{b}{{\psi }_{x}(x, t)}+\!\int_{0}^{\infty }\!\!{g(s)\eta _{x}^{t}}(x, s)\mathrm{d}s\right)^{2}}}\mathrm{d}x}\nonumber\\
		&\quad+\!\int_{0}^{1}\!\!{q(x)\left(h(x, t)-k\left({{\phi }_{x}(x, t)}+\psi(x, t) \right)\right)\left(\hat{b}{{\psi }_{x}(x, t)}+\!\int_{0}^{\infty }\!\!{g(s)\eta _{x}^{t}}(x, s)\mathrm{d}s\right)\mathrm{d}x}+2{{\rho }_{2}}\left(\hat{b}+{{g}_{0}}\right)\!\int_{0}^{1}\!\!{\psi _{t}^{2}(x, t)\mathrm{d}x}\nonumber\\
		&\quad+{{\rho }_{2}}\!\int_{0}^{1}\!\!{q(x){{\psi }_{t}(x, t)}\!\int_{0}^{\infty }\!\!{g'(s)\eta _{x}^{t}(x, s)\mathrm{d}s\mathrm{d}x}}.
	\end{align*}
	Next, we estimate each term on the right-hand side of the above equation. By Young's inequality and Lemma \ref{lem:Lemma3-6}, for any~${{\varepsilon }_{3}}, {{\lambda }_{2}}\in {\mathbb{R}}_{>0}$ and $t\in \mathbb{R}_{>0}$,  we obtain
	\begin{align*}
		2\int_{0}^{1}{{{\left(\hat{b}{{\psi }_{x}(x, t)}+\int_{0}^{\infty }{g(s)\eta _{x}^{t}}(x, s)\mathrm{d}s\right)^{2}}}\mathrm{d}x}&\leq 4G_{0}\int_{0}^{1}{\int_{0}^{\infty }{g^{\delta_{1}}(s){{\left| \eta _{x}^{t}(x, s) \right|}^{2}}}\mathrm{d}s}\mathrm{d}x+4\int_{0}^{1}{\hat{b}^{2}{\psi }_{x}^{2}(x, t)}\mathrm{d}x,
	\end{align*}
	\begin{align*}
		{{\rho }_{2}}\int_{0}^{1}{q(x){{\psi }_{t}(x, t)}\int_{0}^{\infty }{g'(s)\eta _{x}^{t}(x, s)\mathrm{d}s\mathrm{d}x}}&\leq {{\rho }_{2}}{{\varepsilon }_{3}}\int_{0}^{1}{{{\psi }_{t}}^{2}(x, t)}\mathrm{d}x-g(0)\frac{{\rho }_{2}}{{\varepsilon }_{3}}\int_{0}^{1}{\int_{0}^{\infty }{g'(s){{\left| \eta _{x}^{t}(x, s) \right|}^{2}}}\mathrm{d}s}\mathrm{d}x,
	\end{align*}
	\begin{align*}
		&\quad -k\int_{0}^{1}{q(x)\left({{\phi }_{x}(x, t)}+\psi(x, t) \right)\left(\hat{b}{{\psi }_{x}(x, t)}+\int_{0}^{\infty }{g(s)\eta _{x}^{t}}(x, s)\mathrm{d}s\right)\mathrm{d}x}\\
		&\leq 4k^{2}{{\lambda }_{2}}\int_{0}^{1}{\left({{\phi }_{x}(x, t)}+\psi(x, t) \right)^{2}}\mathrm{d}x +\frac{G_{0}}{2{{\lambda }_{2}}}\int_{0}^{1}{\int_{0}^{\infty }{g^{\delta_{1}}(s){{\left| \eta _{x}^{t}(x, s) \right|}^{2}}}\mathrm{d}s}\mathrm{d}x+\frac{\hat{b}^{2}}{2{{\lambda }_{2}}}\int_{0}^{1}{{{\psi }_{x}}^{2}(x, t)}\mathrm{d}x,
	\end{align*}
	and
	\begin{align*}
		\int_{0}^{1}{q(x)h(x, t)\left(\hat{b}{{\psi }_{x}(x, t)}+\int_{0}^{\infty }{g(s)\eta _{x}^{t}}(x, s)\mathrm{d}s\right)\mathrm{d}x}&\leq 8{\hat{b}}^{2}{{\varepsilon }_{3}}\int_{0}^{1}{{{\psi }_{x}}^{2}(x, t)}\mathrm{d}x+\frac{1}{4{{\varepsilon }_{3}}}\int_{0}^{1}{h^{2}(x, t)}\mathrm{d}x\\
		&\quad +8{{\varepsilon }_{3}}{G_{0}}\int_{0}^{1}{\int_{0}^{\infty }{g^{\delta_{1}}(s){{\left| \eta _{x}^{t}(x, s) \right|}^{2}}}\mathrm{d}s}\mathrm{d}x.
	\end{align*}
	Hence, for all~$t\in \mathbb{R}_{>0}$, we conclude that
	\begin{align}
		&\quad{\left(\hat{b}{{\psi }_{x}(0, t)}+\int_{0}^{\infty }{g(s)\eta _{x}^{t}(0, s)}\mathrm{d}s\right)}^{2}+{\left(\hat{b}{{\psi }_{x}(1, t)}+\int_{0}^{\infty }{g(s)\eta _{x}^{t}(1, s)}\mathrm{d}s\right)}^{2}\nonumber\\
		&\leq -\frac{\mathrm{d}}{\mathrm{d}t}\int_{0}^{1}{{{\rho }_{2}}q(x){{\psi }_{t}(x, t)}\left(\hat{b}{{\psi }_{x}(x, t)}+\int_{0}^{\infty }{g(s)\eta _{x}^{t}}(x, s)\mathrm{d}s\right)\mathrm{d}x}+\left({{\rho }_{2}}{{\varepsilon }_{3}}+2{{\rho }_{2}}\left(\hat{b}+g_{0}\right)\right)\int_{0}^{1}{{{\psi }_{t}}^{2}(x, t)}\mathrm{d}x\nonumber\\
		&\quad +\frac{1}{4{{\varepsilon }_{3}}}\int_{0}^{1}{h^{2}(x, t)}\mathrm{d}x+\left(4\hat{b}^{2}+\frac{\hat{b}^{2}}{2{{\lambda }_{2}}}+8{\hat{b}}^{2}{{\varepsilon }_{3}}\right)\int_{0}^{1}{{{\psi }_{x}}^{2}(x, t)}\mathrm{d}x+4k^{2}{{\lambda }_{2}}\int_{0}^{1}{\left({{\phi }_{x}(x, t)}+\psi(x, t) \right)^{2}}\mathrm{d}x\nonumber\\
		&\quad -g(0)\frac{{\rho }_{2}}{{\varepsilon }_{3}}\int_{0}^{1}{\int_{0}^{\infty }{g'(s){{\left| \eta _{x}^{t}(x, s) \right|}^{2}}}\mathrm{d}s}\mathrm{d}x+\left(4G_{0}+\frac{G_{0}}{2{{\lambda }_{2}}}+8{{\varepsilon }_{3}}{G_{0}}\right)\int_{0}^{1}{\int_{0}^{\infty }{g^{\delta_{1}}(s){{\left| \eta _{x}^{t}(x, s) \right|}^{2}}}\mathrm{d}s}\mathrm{d}x.\label{eq:30}
	\end{align}
	Similarly, combining \eqref{eq:sys}, \eqref{eq:28}, and Young's inequality, for all~$t\in \mathbb{R}_{>0}$, we obtain
	\begin{equation*}
		\begin{split}
			\frac{\mathrm{d}}{\mathrm{d}t}J_{2}(t)
			& \le \frac{1}{2}\left[kq(x)\phi _{x}^{2}(x, t)\right]_{x=0}^{x=1}+2k\int_{0}^{1}{\phi _{x}^{2}(x, t)\mathrm{d}x}+2\int_{0}^{1}{k\left| {{\phi }_{x}(x, t)}{{\psi }_{x}(x, t)} \right|\mathrm{d}x}+2{{\rho }_{1}}\int_{0}^{1}{\phi _{t}^{2}(x, t)\mathrm{d}x}\\
			&\quad+2\int_{0}^{1}{\left| {{\phi }_{x}(x, t)}f(x, t) \right|\mathrm{d}x}\\
			&\leq-k\left(\phi _{x}^{2}(1, t)+\phi _{x}^{2}(0, t)\right)+4k\int_{0}^{1}{\phi _{x}^{2}(x, t)\mathrm{d}x}+k\int_{0}^{1}{{\psi }_{x}^{2}(x, t)}\mathrm{d}x+2{{\rho }_{1}}\int_{0}^{1}{\phi _{t}^{2}(x, t)\mathrm{d}x}+\frac{1}{k}\int_{0}^{1}{f^{2}(x, t)}\mathrm{d}x,
		\end{split}
	\end{equation*}
	which implies that
	\begin{align}
		\left(\phi _{x}^{2}(1, t)+\phi _{x}^{2}(0, t)\right)&\leq -\frac{1}{k}\frac{\mathrm{d}}{\mathrm{d}t}\int_{0}^{1}{{{\rho }_{1}}q(x){{\phi }_{t}(x, t)}{{\phi }_{x}(x, t)}\mathrm{d}x}+4\int_{0}^{1}{\phi _{x}^{2}(x, t)\mathrm{d}x}+\int_{0}^{1}{{\psi }_{x}^{2}(x, t)}\mathrm{d}x+2\frac{{{\rho }_{1}}}{k}\int_{0}^{1}{\phi _{t}^{2}(x, t)\mathrm{d}x}\nonumber\\
		&\quad +\frac{1}{k^{2}}\int_{0}^{1}{f^{2}(x, t)}\mathrm{d}x, \quad \forall t\in \mathbb{R}_{>0}.\label{eq:31}
	\end{align}
	Combining \eqref{eq:32}, \eqref{eq:30}, and \eqref{eq:31}, we obtain \eqref{eq:26}.$\hfill\square$
\end{pf}
\par The following lemma provides a bound for the derivative of ${K}(t)$.
\begin{lemma}\label{lem:Lemma4-5} Let~$(\phi, \phi_t, \psi, \psi_t, \eta^t)$ be a solution to system \eqref{eq:sys}. Then, for any~${{\varepsilon }_{3}}\in {\mathbb{R}}_{>0}$, it holds that for all $t\in \mathbb{R}_{>0}$,
	\begin{align}
		\frac{\mathrm{d}}{\mathrm{d}t}K(t)&\le -{{\rho }_{1}}\int_{0}^{1}{\phi _{t}^{2}(x, t)\mathrm{d}x}-{{\rho }_{2}}\int_{0}^{1}{\psi _{t}^{2}(x, t)\mathrm{d}x}+\left(\hat{b}+2{{\varepsilon }_{3}}\right)\int_{0}^{1}{\psi _{x}^{2}(x, t)\mathrm{d}x}+k\int_{0}^{1}{{{\left({{\phi }_{x}(x, t)}+\psi(x, t) \right)}^{2}}\mathrm{d}x}\nonumber\\
		&\quad+\frac{{G_{0}}}{4{{\varepsilon }_{3}}}\int_{0}^{1}{\int_{0}^{\infty }{{{g}^{{\delta_{1}}}}(s){{\left| \eta _{x}^{t}(x, s) \right|}^{2}}\mathrm{d}s\mathrm{d}x}}+{{\varepsilon }_{3}}\int_{0}^{1}{\phi _{x}^{2}(x, t)\mathrm{d}x}+\frac{1}{4{{\varepsilon }_{3}}}\int_{0}^{1}{\left({{f}^{2}}(x, t)+{{h}^{2}}(x, t)\right)\mathrm{d}x},\label{eq:33}
	\end{align}
	where $G_{0}$ is defined as in Lemma\ref{lem:Lemma3-6}.
\end{lemma}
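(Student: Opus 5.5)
The estimate \eqref{eq:33} will follow from differentiating $K(t)$ along solutions, substituting the equations \eqref{eq:sys01}--\eqref{eq:sys02}, integrating by parts using the Dirichlet conditions \eqref{eq:ib01}, and then bounding the mixed terms with Young's inequality, Friedrichs' inequality, and Lemma \ref{lem:Lemma3-6}. As in the preceding lemmas, I would work with strong solutions from Proposition \ref{pro:Proposition1} and pass to mild solutions by density.

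\textbf{Step 1: differentiate and substitute.} For $t>0$,
\begin{align*}
\frac{\mathrm{d}}{\mathrm{d}t}K(t)=-\rho_1\int_0^1\phi_t^2\,\mathrm{d}x-\rho_2\int_0^1\psi_t^2\,\mathrm{d}x-\int_0^1\phi\,\rho_1\phi_{tt}\,\mathrm{d}x-\int_0^1\psi\,\rho_2\psi_{tt}\,\mathrm{d}x .
\end{align*}
I will replace $\rho_1\phi_{tt}=k(\phi_x+\psi)_x+f$ and $\rho_2\psi_{tt}=\hat b\psi_{xx}+\int_0^\infty g\,\eta^t_{xx}\,\mathrm{d}s-k(\phi_x+\psi)+h$. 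Integrating by parts (all boundary terms vanish because $\phi=\psi=0$ at $x=0,1$) should give
\begin{align*}
\frac{\mathrm{d}}{\mathrm{d}t}K(t)&=-\rho_1\|\phi_t\|^2-\rho_2\|\psi_t\|^2+k\int_0^1(\phi_x+\psi)\phi_x\,\mathrm{d}x+k\int_0^1(\phi_x+\psi)\psi\,\mathrm{d}x+\hat b\|\psi_x\|^2\\
&\quad+\int_0^1\psi_x\int_0^\infty g(s)\eta^t_x\,\mathrm{d}s\,\mathrm{d}x-\int_0^1(f\phi+h\psi)\,\mathrm{d}x .
\end{align*}
The two $k$-terms combine exactly into $k\|\phi_x+\psi\|^2$, which matches the target.

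\textbf{Step 2: bound the remaining terms.} By Young's inequality and \eqref{eq:8},
\[
\int_0^1\psi_x\int_0^\infty g\,\eta^t_x\,\mathrm{d}s\,\mathrm{d}x\le\varepsilon_3\|\psi_x\|^2+\frac{G_0}{4\varepsilon_3}\int_0^1\!\int_0^\infty g^{\delta_1}|\eta^t_x|^2\,\mathrm{d}s\,\mathrm{d}x .
\]
For the source terms I will use Friedrichs' inequality, $\|\phi\|\le\|\phi_x\|$ and $\|\psi\|\le\|\psi_x\|$, followed by Young's inequality:
\[
-\int_0^1 f\phi\,\mathrm{d}x\le\varepsilon_3\|\phi_x\|^2+\frac{1}{4\varepsilon_3}\|f\|^2,\qquad -\int_0^1 h\psi\,\mathrm{d}x\le\varepsilon_3\|\psi_x\|^2+\frac{1}{4\varepsilon_3}\|h\|^2 .
\]
Collecting the two $\varepsilon_3\|\psi_x\|^2$ contributions gives the coefficient $\hat b+2\varepsilon_3$ and yields \eqref{eq:33}.

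\textbf{Main obstacle.} This is a routine computation. The only delicate point is the sign bookkeeping in the integrations by parts, specifically verifying that the $k$-terms recombine exactly into $+k\|\phi_x+\psi\|^2$ with no leftover cross term. That requires care with $-\int\phi\,k(\phi_x+\psi)_x=k\int\phi_x(\phi_x+\psi)$ and $\int\psi\,k(\phi_x+\psi)=k\int\psi(\phi_x+\psi)$.
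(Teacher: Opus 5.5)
Your proposal is correct and follows the paper's proof essentially verbatim. Both differentiate $K$, substitute \eqref{eq:sys01}--\eqref{eq:sys02}, and integrate by parts so that the two $k$-terms recombine into $k\int_0^1(\phi_x+\psi)^2\,\mathrm{d}x$; the memory cross term is then bounded by Young's inequality with \eqref{eq:8}, and the two source terms by Friedrichs' plus Young's inequality, which gives the coefficient $\hat b+2\varepsilon_3$. Your remark about working with strong solutions and passing to mild ones by density is a harmless refinement that the paper leaves implicit.
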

\begin{pf}Combining \eqref{eq:sys} and \eqref{eq:16}, we obtain via integration by parts that
	\begin{align}
		\frac{\mathrm{d}}{\mathrm{d}t}K(t)& =-\!{{\rho }_{1}}\!\int_{0}^{1}\!{\phi _{t}^{2}(x, t)\mathrm{d}x}\!-\!{{\rho }_{2}}\!\int_{0}^{1}\!{\psi _{t}^{2}(x, t)\mathrm{d}x}\!+\! k \!\int_{0}^{1}\!{{{\left({{\phi }_{x}(x, t)}\!+\!\psi(x, t) \right)}^{2}}\mathrm{d}x}+\!\hat{b}\!\int_{0}^{1}\!{\psi _{x}^{2}(x, t)\mathrm{d}x}\!\nonumber\\
		&\quad+\!\int_{0}^{1}\!{{{\psi }_{x}(x, t)}\!\int_{0}^{\infty }\!{g(s)\eta _{x}^{t}(x, s)\mathrm{d}s\mathrm{d}x}}\!-\!\int_{0}^{1}\!{f(x, t)\phi(x, t) \mathrm{d}x}-\!\int_{0}^{1}\!{h(x, t)\psi(x, t) \mathrm{d}x},\quad \forall t\in \mathbb{R}_{>0}. \label{eq:34}		
	\end{align}
	By Young's inequality, Friedrichs' inequality, and Lemma \ref{lem:Lemma3-6}, for any ${{\varepsilon }_{3}}\in {\mathbb{R}}_{>0}$ and  $t\in {\mathbb{R}}_{\geq0}$, we obtain
	\begin{align*}
		\int_{0}^{1}{{{\psi }_{x}(x, t)}\int_{0}^{\infty }{g(s)\eta _{x}^{t}(x, s)\mathrm{d}s\mathrm{d}x}}&\leq\frac{G_{0}}{4{{\varepsilon }_{3}}}\int_{0}^{1}{\int_{0}^{\infty }{{{g}^{{\delta_{1}}}}(s){{\left| \eta _{x}^{t}(x, s) \right|}^{2}}\mathrm{d}s\mathrm{d}x}}+{{\varepsilon }_{3}}\int_{0}^{1}{\psi _{x}^{2}(x, t)\mathrm{d}x},\\
		-\int_{0}^{1}{f(x, t)\phi(x, t) \mathrm{d}x}&\leq{{\varepsilon }_{3}}\int_{0}^{1}{\phi _{x}^{2}(x, t)\mathrm{d}x}+\frac{1}{4{{\varepsilon }_{3}}}\int_{0}^{1}{f^{2}(x, t)}\mathrm{d}x,\\
		-\int_{0}^{1}{h(x, t)\psi(x, t) \mathrm{d}x}&\leq{{\varepsilon }_{3}}\int_{0}^{1}{\psi _{x}^{2}(x, t)\mathrm{d}x}+\frac{1}{4{{\varepsilon }_{3}}}\int_{0}^{1}{h^{2}(x, t)}\mathrm{d}x.
	\end{align*}
	Substituting the above estimates into \eqref{eq:34}, we obtain \eqref{eq:33}.$\hfill\square$
\end{pf}
\section{iISS assessment}\label{sec:6}This section is mainly devoted to the iISS assessment of system \eqref{eq:sys}. In the first subsection, we construct and estimate the Lyapunov functional based on the auxiliary functionals and related estimates established in Section \ref{sec:5}. In the second subsection, using the Lyapunov method, we perform the stability analysis of system \eqref{eq:sys} within the iISS framework and derive PiISS and EiISS estimates.
\par Before proceeding with the stability analysis, we first define the energy functional of system \eqref{eq:sys} as follows:
\begin{align}
	E(t)&=\frac{1}{2}\int_{0}^{1}\left(\rho_1\phi_{t}^{2}(x, t)+\rho_2\psi_{t}^{2}(x, t)+k(\phi_{x}(x, t)+\psi(x, t))^{2}+\hat{b}\psi_{x}^{2}(x, t)\right)\mathrm{d}x\nonumber\\
	&\quad+\frac{1}{2}\int_{0}^{1}\int_{0}^{\infty}g(s)|\eta_{x}^{t}(x, s)|^{2}\mathrm{d}s\mathrm{d}x,\quad\forall t\in {\mathbb{R}}_{\geq0}. \label{eq:5}
\end{align}
In addition, in the subsequent derivations, we let
\begin{align*}
	&e_{1}(t)=\int_{0}^{1}{\left(\phi _{t}^{2}(x, t)+\psi _{t}^{2}(x, t)+{{({{\phi }_{x}(x, t)}+\psi(x, t) )}^{2}}+\psi _{x}^{2}(x, t)\right)}\mathrm{d}x,\quad\forall t\in {\mathbb{R}}_{\geq0},
\end{align*}
and
\begin{align*}
	&e_{2}(t)=\int_{0}^{1}{\int_{0}^{\infty }{g(s){{\left| \eta _{x}^{t}(x, s) \right|}^{2}}}}\mathrm{d}s\mathrm{d}x,\quad\forall t\in {\mathbb{R}}_{\geq0}.
\end{align*}
\subsection{Construction of the Lyapunov functional}\label{subsec:6}In this subsection, combining the auxiliary functionals and their related estimates in Section \ref{sec:5}, we construct the Lyapunov functional $L(t)$. Afterwards, we estimate this Lyapunov functional and analyze the equivalence between $L(t)$ and $E(t)$.
Specifically, let
\begin{align}
	L(t)&=NE(t)+{{N}_{1}}{{I}_{1}}(t)+{{N}_{2}}{{I}_{2}}(t)+J(t)+\frac{{{\varepsilon }_{3}}}{k}\int_{0}^{1}{{{\rho }_{1}}q(x){{\phi }_{t}(x, t)}{{\phi }_{x}(x, t)}\mathrm{d}x}+\mu K(t)\nonumber\\
	&\quad+\frac{1}{4{{\varepsilon }_{3}}}\int_{0}^{1}{{{\rho }_{2}}q(x){{\psi }_{t}(x, t)}\left(\hat{b}{{\psi }_{x}(x, t)}+\int_{0}^{\infty }{g(s)\eta _{x}^{t}(x, s)\mathrm{d}s}\right)\mathrm{d}x},\quad \forall t\in \mathbb{R}_{\geq0},\label{eq:35}
\end{align}
where~$N, {{N}_{1}}, {{N}_{2}}, \mu$, and ${{\varepsilon }_{3}}$~are positive constants to be determined, and the auxiliary functionals~${{I}_{1}}(t), {{I}_{2}}(t), J(t)$, and $K(t)$ are defined by \eqref{eq:13}, \eqref{eq:14}, \eqref{eq:15}, and \eqref{eq:16}, respectively. 
\par To establish the iISS estimate for system \eqref{eq:sys}, we first estimate the derivative of $L(t)$, which is formalized in the proposition below.
\begin{proposition}
	There exist positive constants ${\sigma }_{1}, C_3$, and $C_4$ such that the Lyapunov functional $L(t)$ satisfies the following estimate:	
	\begin{align}
		\frac{\mathrm{d}}{\mathrm{d}t}L(t)&\le -{{\sigma }_{1}}\left(e_{1}(t)\!+\!\int_{0}^{1}\!\!{\int_{0}^{\infty }\!\!{{{g}^{{\delta_{1}}}}(s){{\left| \eta _{x}^{t}(x, s) \right|}^{2}}}}\mathrm{d}s\mathrm{d}x\right) \!+\!{C_3}\!\int_{0}^{1}\!\!{{{f}^{2}}(x, t)\mathrm{d}x}\!+\!{C_4}\!\int_{0}^{1}\!\!{{{h}^{2}}(x, t)\mathrm{d}x},\quad \forall t\in \mathbb{R}_{>0}. \label{eq:39}
	\end{align}
\end{proposition}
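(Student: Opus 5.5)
The plan is to differentiate $L(t)$ term by term and feed in the estimates of Lemmas \ref{lem:Lemma4-1}--\ref{lem:Lemma4-5}, then fix the free constants in a specific order. The argument is carried out for strong solutions, with $U_0\in D(\mathcal{A})$ and $f,h\in C^{1}(\mathbb{R}_{\geq0};L^2(0,1))$. All the integrations by parts, including those producing the boundary terms, are legitimate there, and the final estimate passes to mild solutions by density via Proposition \ref{pro:Proposition1}.

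\textbf{Step 1: energy identity.} Multiplying \eqref{eq:sys01} by $\phi_t$ and \eqref{eq:sys02} by $\psi_t$, and using \eqref{eq:sys03} exactly as in Step 1 of the proof of Proposition \ref{pro:Proposition1}, gives
\begin{align*}
E'(t)=\frac12\int_0^1\!\int_0^\infty g'(s)|\eta_x^t|^2\mathrm{d}s\mathrm{d}x+\int_0^1 (f\phi_t+h\psi_t)\mathrm{d}x .
\end{align*}
The key observation is that $g'\le -k_0H(g)\le -k_1 g^{\delta_1}$ by \eqref{eq:2.6}. Hence half of the dissipative term is bounded by $-\frac{Nk_1}{4}\int_0^1\int_0^\infty g^{\delta_1}|\eta_x^t|^2$, and the other half, $\frac N4\int\int g'|\eta^t_x|^2$, is kept to absorb all the $-g'$ terms appearing in Lemmas \ref{lem:Lemma4-2}--\ref{lem:Lemma4-4}. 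The disturbance terms are split by Young's inequality: $N\int f\phi_t\le \frac{\sigma}{4}\int\phi_t^2+\frac{N^2}{\sigma}\int f^2$, and similarly for $h\psi_t$. This only enlarges $C_3$ and $C_4$.

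\textbf{Step 2: summing the lemmas.} The boundary term in \eqref{eq:23} is replaced by the bound \eqref{eq:26}. Its two time-derivative terms cancel exactly against the derivatives of the two $q(x)$-weighted integrals built into \eqref{eq:35}; this is precisely why those terms appear in $L$. Summing $N E'+N_1I_1'+N_2I_2'+J'+\mu K'$ then leaves only quadratic terms in $\phi_t,\psi_t,\phi_x+\psi,\psi_x,\phi_x$, plus the $g^{\delta_1}$, $-g'$, $f^2$ and $h^2$ terms. The $\phi_x^2$ terms from \eqref{eq:26} and \eqref{eq:33} are handled with $\phi_x^2\le 2(\phi_x+\psi)^2+2\psi_x^2$ and Friedrichs' inequality.

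\textbf{Step 3: choice of constants, the main obstacle.} Each state quantity has one source of negativity: $(\phi_x+\psi)^2$ from $J$, $\phi_t^2$ from $\mu K$, $\psi_x^2$ from $N_1I_1$, $\psi_t^2$ from $N_2I_2$, and the memory terms from $NE$. The constants must be fixed in a strict hierarchy, each choice depending only on earlier ones:
\begin{enumerate}[(i)]
\item Choose $\varepsilon_3$ small and then $\lambda_2$ small (with $k^2\lambda_2/\varepsilon_3$ small), so that the total coefficient of $(\phi_x+\psi)^2$ from $J$ and \eqref{eq:26} is at most $-k/2$.
\item Choose $\mu$ small, so that the $\mu k(\phi_x+\psi)^2$ contribution of \eqref{eq:33} is at most $k/4$. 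After this, $\mu\rho_1$ is the fixed negative coefficient of $\phi_t^2$, which must dominate the $\varepsilon_3$-terms of \eqref{eq:23} and \eqref{eq:26}. This may force $\varepsilon_3$ to be taken smaller relative to $\mu$, so (i) and (ii) have to be made compatible by taking $\varepsilon_3\ll\mu$ and then $\lambda_2\ll\varepsilon_3$ after $\mu$ is fixed.
\item Choose $\lambda_1=\hat b/6$ and $N_1$ large, so that $-N_1\hat b/2$ dominates all $\psi_x^2$ coefficients (including $\hat b^2/(8\lambda_2\varepsilon_3^2)$ and $\mu(\hat b+2\varepsilon_3)$). Then take $\varepsilon_1$ with $N_1\varepsilon_1\le\mu/4$.
\item Choose $N_2$ large, so that $-N_2\rho_2g_0/2$ dominates the $\psi_t^2$ coefficients, among them $N_1(\rho_2+\rho_1/(4\varepsilon_1))$. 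Then take $\varepsilon_2$ with $N_2\varepsilon_2\hat b^2\le N_1\hat b/8$ and $N_2\varepsilon_2k^2\le k/8$.
\item Finally take $N$ large, so that $-\frac{Nk_1}{4}$ beats every $G_0(\cdot)$ coefficient of $\int\int g^{\delta_1}|\eta_x^t|^2$, and $\frac N4\int\int g'|\eta^t_x|^2$ beats the coefficients of $-\int\int g'|\eta^t_x|^2$ from \eqref{eq:21}, \eqref{eq:23} and \eqref{eq:26}.
\end{enumerate}
The delicate point is the coupling between $\varepsilon_3$, $\lambda_2$ and $\mu$ in (i)--(ii). I would resolve it by fixing $\mu$ first, using only the $k(\phi_x+\psi)^2$ constraint, and then shrinking $\varepsilon_3$ and $\lambda_2$; all later constants are then determined one by one. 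The result is \eqref{eq:39} with $\sigma_1$ the minimum of the remaining negative coefficients, and $C_3,C_4$ collecting all $f^2,h^2$ coefficients.
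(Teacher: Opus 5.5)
Your proposal is correct and follows the paper's proof essentially step for step. It uses the same cancellation of the $q(x)$-weighted time derivatives, the same $N$-independent Young weight on the disturbance terms of $NE'$ (the paper's $\varepsilon_4=\rho_1/(12N)$), the same conversion $g'\le -k_0H(g)\le -k_0g^{-\delta_1}(0)H(g(0))\,g^{\delta_1}$ for the memory terms, and the same order of constants: $\mu=1/4$ fixed first, then $\varepsilon_3$ small with $\lambda_2=\varepsilon_3^2$, then $N_1$, $\varepsilon_1$, $N_2$, $\varepsilon_2$, and finally $N$. Your density remark for passing from strong to mild solutions is a small piece of extra rigor that the paper leaves implicit.
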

\begin{pf}
	By Young's inequality, for any~${{\varepsilon }_{4}}\in {\mathbb{R}}_{>0}$, we obtain that for all $t\in \mathbb{R}_{>0}$,
	\begin{align}
		\frac{\mathrm{d}}{\mathrm{d}t}E(t)&=\frac{1}{2}{{\int_{0}^{1}{\int_{0}^{\infty }{{g}'(s)\left| \eta _{x}^{t}(x, s) \right|^{2}}}}}\mathrm{d}s\mathrm{d}x+\int_{0}^{1}{f(x, t){{\phi }_{t}(x, t)}}\mathrm{d}x+\int_{0}^{1}{h(x, t){{\psi }_{t}(x, t)}}\mathrm{d}x\label{eq:36a}\\
		&\leq\frac{{\varepsilon }_{4}}{2}\int_{0}^{1}\left({{\phi }_{t}^{2}(x, t)}+{{\psi }_{t}}^{2}(x, t)\right)\mathrm{d}x +\frac{1}{2{\varepsilon }_{4}}\int_{0}^{1}\left({h^{2}(x, t)+f^{2}(x, t)}\right)\mathrm{d}x.\label{eq:36}
	\end{align}
	By Young's inequality and Friedrichs' inequality, for any~$t\in {\mathbb{R}}_{\geq0}$, we obtain
	\begin{align}
		\int_{0}^{1}{\phi _{x}^{2}(x, t)\mathrm{d}x}&=\int_{0}^{1}{{{\left({{\phi }_{x}(x, t)}+\psi(x, t) -\psi(x, t) \right)}^{2}}\mathrm{d}x}\nonumber\\
		&\le 2\int_{0}^{1}{{{\left({{\phi }_{x}(x, t)}+\psi(x, t) \right)}^{2}}\mathrm{d}x}+2\int_{0}^{1}{\psi _{x}^{2}(x, t)}\mathrm{d}x.\label{eq:37}
	\end{align}
	According to the construction of the Lyapunov functional~$L(t)$, \eqref{eq:17}, \eqref{eq:21}, \eqref{eq:23}, \eqref{eq:26}, and \eqref{eq:33}, we deduce that for all $t\in \mathbb{R}_{>0}$,
	\begin{align}
		\frac{\mathrm{d}}{\mathrm{d}t}L(t)&\le -{{\Lambda }_{1}}\int_{0}^{1}{\psi _{x}^{2}(x, t)}\mathrm{d}x-{{\Lambda }_{2}}\int_{0}^{1}{\phi _{t}^{2}(x, t)}\mathrm{d}x-{{\Lambda }_{3}}\int_{0}^{1}{\psi _{t}^{2}(x, t)}\mathrm{d}x-{{\Lambda }_{4}}\int_{0}^{1}{{{\left({{\phi }_{x}(x, t)}+\psi(x, t) \right)}^{2}}}\mathrm{d}x\nonumber\\
		&\quad+\left(\frac{N}{2}-{{C}_{1}}\right)\int_{0}^{1}{\int_{0}^{\infty }{g'(s){{\left| \eta _{x}^{t}(x, s) \right|}^{2}}\mathrm{d}s\mathrm{d}x}}+{{C}_{2}}\int_{0}^{1}{\int_{0}^{\infty }{{{g}^{{\delta_{1}}}}(s){{\left| \eta _{x}^{t}(x, s) \right|}^{2}}\mathrm{d}s\mathrm{d}x}}\nonumber\\
		&\quad +{C_3}\int_{0}^{1}{{{f}^{2}}(x, t)}\mathrm{d}x+{C_4}\int_{0}^{1}{{{h}^{2}}(x, t)}\mathrm{d}x,\label{eq:38}
	\end{align}
	where
	\begin{align*}
		{{\Lambda }_{1}}&=-\left(\left(-\hat{b}+3{{\lambda }_{1}}\right){{N}_{1}}+{{\varepsilon }_{2}}{{{\hat{b}}}^{2}}{{N}_{2}}+\frac{{\hat{b}}}{k}{{\varepsilon }_{3}}+\frac{1}{{{\varepsilon }_{3}}}\left(\varepsilon _{3}^{2}+{{{\hat{b}}}^{2}}+\frac{{{{\hat{b}}}^{2}}}{8{{\lambda }_{2}}}+2{{{\hat{b}}}^{2}}{{\varepsilon }_{3}}\right)+\mu \left(\hat{b}+2{{\varepsilon }_{3}}\right)+2\left(4+\mu \right){{\varepsilon }_{3}}\right), \\ 
		{{\Lambda }_{2}}&=-\left(\frac{N}{2}{{\varepsilon }_{4}}+{{N}_{1}}{{\varepsilon }_{1}}{{\rho }_{1}}+{{\varepsilon }_{3}}+\frac{2{{\rho }_{1}}}{k}{{\varepsilon }_{3}}-\mu {{\rho }_{1}}\right), \\ 
		{{\Lambda }_{3}}&=-\left(\frac{N}{2}{{\varepsilon }_{4}}+{{N}_{1}}\left({{\rho }_{2}}+\frac{{{\rho }_{1}}}{4{{\varepsilon }_{1}}}\right)-\frac{{{N}_{2}}}{2}{{\rho }_{2}}{{g}_{0}}+{{\rho }_{2}}+\frac{{{\rho }_{2}}}{4}+\frac{{{\rho }_{2}}\left(\hat{b}+{{g}_{0}}\right)}{2{{\varepsilon }_{3}}}-\mu {{\rho }_{2}}\right), \\ 
		{{\Lambda }_{4}}&=-\left({{N}_{2}}{{\varepsilon }_{2}}{{k}^{2}}-k+{{\varepsilon }_{3}}+\frac{{{k}^{2}}{{\lambda }_{2}}}{{{\varepsilon }_{3}}}+2\mu k+2(4+\mu ){{\varepsilon }_{3}}\right),\\
		{{C}_{1}}&={{N}_{2}}\frac{{{\rho }_{2}}g(0)}{2{{g}_{0}}}+g(0)\frac{\rho _{1}^{2}}{4{{\varepsilon }_{3}}{{k}^{2}}}+\frac{g(0){{\rho }_{2}}}{4\varepsilon _{3}^{2}}, \\ 
		{{C}_{2}}&={{N}_{1}}\frac{{G_{0}}}{4{{\lambda }_{1}}}+{{N}_{2}}{G_{0}}\left(1+\frac{1}{2{{\varepsilon }_{2}}}+{{\varepsilon }_{2}}\right)+\frac{{{\varepsilon }_{3}}}{k}{G_{0}}+{G_{0}}\left(\frac{1}{{{\varepsilon }_{3}}}+\frac{1}{8{{\lambda }_{2}}{{\varepsilon }_{3}}}+2\right)+\frac{\mu }{4{{\varepsilon }_{3}}}{G_{0}},\\
		{C_3}&=\frac{N}{2{{\varepsilon }_{4}}}+\frac{{{N}_{1}}}{4{{\lambda }_{1}}}+\frac{1}{4k{{\varepsilon }_{3}}}+\frac{{\hat{b}}}{4k{{\varepsilon }_{3}}}+\frac{{{\varepsilon }_{3}}}{{{k}^{2}}}+\frac{\mu }{4{{\varepsilon }_{3}}}, \\ 
		{C_4}&=\frac{N}{2{{\varepsilon }_{4}}}+\frac{{{N}_{1}}}{4{{\lambda }_{1}}}+\frac{{{N}_{2}}}{4{{\varepsilon }_{2}}}+\frac{1}{4{{\varepsilon }_{3}}}+\frac{1}{16{{\varepsilon }_{3}^{2}}}+\frac{\mu }{4{{\varepsilon }_{3}}}.  
	\end{align*}
	\par At this point, we determine the values of the parameters $N, N_{i}, {{\lambda }_{i}}(i=1, 2), {{\varepsilon }_{j}}(j=1, 2, 3, 4)$, and $\mu$.
	First, we fix the parameters
	\begin{equation*}
		{{\lambda }_{1}}=\frac{{\hat{b}}}{6},\quad {{\lambda }_{2}}=\varepsilon _{3}^{2},\quad \mu =\frac{1}{4},\quad {\varepsilon }_{4}=\frac{\rho _{1}}{12N}.
	\end{equation*}
	Then, we choose
	\begin{equation*}
		{{\varepsilon }_{3}}<\min \left\{\frac{k}{4\left({{k}^{2}}+\frac{19}{2}\right)}, \frac{{{\rho }_{1}}k}{12\left(k+2{{\rho }_{1}}\right)}\right\}.
	\end{equation*}
	When~${{\varepsilon }_{3}}$ and~$\mu$ are fixed,~${{N}_{1}}$ can be chosen sufficiently large such that
	\begin{equation*}
		\frac{{\hat{b}}}{4}{{N}_{1}}>\frac{1}{{{\varepsilon }_{3}}}\left(\frac{{\hat{b}}}{k}\varepsilon _{3}^{2}+\varepsilon _{3}^{2}+{{{\hat{b}}}^{2}}+\frac{{{{\hat{b}}}^{2}}}{8{{\lambda }_{2}}}+2{{{\hat{b}}}^{2}}{{\varepsilon }_{3}}\right)+\mu \left(\hat{b}+2{{\varepsilon }_{3}}\right)+2\left(4+\mu \right){{\varepsilon }_{3}}.
	\end{equation*} 
	After that, we choose~${\varepsilon }_{1}\leq\frac{1}{24{{N}_{1}}}$. Since~${\varepsilon }_{4}=\frac{\rho _{1}}{12N}$, we select~${{{N}_{2}}}$ sufficiently large such that
	\begin{equation*}
		\begin{split}
			\frac{{{N}_{2}}}{2}{{\rho }_{2}}{{g}_{0}}&>\frac{N}{2}{{\varepsilon }_{4}}+{{N}_{1}}\left({{\rho }_{2}}+\frac{{{\rho }_{1}}}{4{{\varepsilon }_{1}}}\right)+{{\rho }_{2}}+\frac{{{\rho }_{2}}}{4}+\frac{{{\rho }_{2}}\left(\hat{b}+{{g}_{0}}\right)}{2{{\varepsilon }_{3}}}-\mu {{\rho }_{2}}\\
			&=\frac{{\rho }_{1}}{24}+{{N}_{1}}\left({{\rho }_{2}}+\frac{{{\rho }_{1}}}{4{{\varepsilon }_{1}}}\right)+{{\rho }_{2}}+\frac{{{\rho }_{2}}}{4}+\frac{{{\rho }_{2}}\left(\hat{b}+{{g}_{0}}\right)}{2{{\varepsilon }_{3}}}-\mu {{\rho }_{2}}.
		\end{split}
	\end{equation*}
	When~${{{N}_{1}}}$ and ${{{N}_{2}}}$ are fixed, we choose~${{\varepsilon }_{2}}$ sufficiently small such that
	\begin{equation*}
		\quad {{\varepsilon }_{2}}<\min \left\{\frac{1}{4k{{N}_{2}}},\quad \frac{{{N}_{1}}}{4{{N}_{2}}\hat{b}}\right\}.
	\end{equation*}
	Since $g\in C^{0}({{\mathbb{R}}_{\geq 0}}; {{\mathbb{R}}_{>0}}) \cap C^{1}({{\mathbb{R}}_{\geq 0}}; {{\mathbb{R}_{\leq0}}})$, from \eqref{eq:2.6} we obtain
	\begin{equation*}
		H(g(s))\geq \left(\frac{g(s)}{g(0)}\right)^{\delta_1}H(g(0)),\quad\forall s\in {\mathbb{R}}_{\geq0}.
	\end{equation*}
	Finally, we choose~$N$ sufficiently large such that~$N>2\left({{C}_{1}}+\frac{{{C}_{2}}{{(g(0))}^{{\delta_{1}}}}}{{{k}_{0}}H(g(0))}\right)$, and thus we obtain
	\begin{equation*}
		\begin{split}
			&\quad{{C}_{2}}\int_{0}^{1}{\int_{0}^{\infty }{{{g}^{{\delta_{1}}}}(s){{\left| \eta _{x}^{t}(x, s) \right|}^{2}}\mathrm{d}s\mathrm{d}x}}+\left(\frac{N}{2}-{{C}_{1}}\right)\int_{0}^{1}{\int_{0}^{\infty }{g'(s){{\left| \eta _{x}^{t}(x, s) \right|}^{2}}\mathrm{d}s\mathrm{d}x}}\\
			&\le\left({{k}_{0}}\left({{C}_{1}}-\frac{N}{2}\right){{g}^{-{\delta_{1}}}}(0)H\left(g(0)\right)\right)\int_{0}^{1}{\int_{0}^{\infty }{{{g}^{{\delta_{1}}}}(s){{\left| \eta _{x}^{t}(x, s) \right|}^{2}}\mathrm{d}s\mathrm{d}x}}+ {{C}_{2}}\int_{0}^{1}{\int_{0}^{\infty }{{{g}^{{\delta_{1}}}}(s){{\left| \eta _{x}^{t}(x, s) \right|}^{2}}\mathrm{d}s\mathrm{d}x}}\\ 
			&=\left({{C}_{2}}+{{k}_{0}}\left({{C}_{1}}-\frac{N}{2}\right){{g}^{-{\delta_{1}}}}(0)H\left(g(0)\right)\right)\int_{0}^{1}{\int_{0}^{\infty }{{{g}^{{\delta_{1}}}}(s){{\left| \eta _{x}^{t}(x, s) \right|}^{2}}\mathrm{d}s\mathrm{d}x}}\\
			&<0,\quad \forall t\in {\mathbb{R}}_{\geq0}.
		\end{split}
	\end{equation*}
	In summary,~${{\Lambda }_{1}}, {{\Lambda }_{2}}, {{\Lambda }_{3}}, {{\Lambda }_{4}}$, and $ {{k}_{0}}\left(\frac{N}{2}-{{C}_{1}}\right){{g}^{-{\delta_{1}}}}(0)H\left(g(0)\right)-{{C}_{2}}$ are all positive real numbers.
	We take~${{\sigma }_{1}}\in {\mathbb{R}}_{>0}$ such that 
	\begin{center}
		$-{{\sigma }_{1}}=\max \left\{-{{\Lambda }_{1}}, -{{\Lambda }_{2}}, -{{\Lambda }_{3}}, -{{\Lambda }_{4}}, -\left({{k}_{0}}\left(\frac{N}{2}-{{C}_{1}}\right){{g}^{-{\delta_{1}}}}(0)H\left(g(0)\right)-{{C}_{2}}\right)\right\}$,
	\end{center} 
	and then we obtain \eqref{eq:39}.$\hfill\square$
\end{pf}

\par Next, we prove the equivalence between $E(t)$ and $L(t)$, as formulated in the proposition below.
\begin{proposition}
	There exist ${{\gamma }_{1}},{{\gamma }_{2}}\in {\mathbb{R}}_{>0}$ such that
	\begin{equation}
		{{\gamma }_{2}}E(t)\le L(t)\le {{\gamma}_{1}}E(t),\quad \forall t\in {\mathbb{R}}_{\geq0}.\label{eq:40}
	\end{equation}
\end{proposition}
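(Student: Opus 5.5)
The plan is the standard perturbation argument. Write $L(t)-NE(t)=\mathcal{R}(t)$, where
\[
\mathcal{R}(t)=N_1I_1(t)+N_2I_2(t)+J(t)+\mu K(t)+\frac{\varepsilon_3}{k}J_2(t)+\frac{1}{4\varepsilon_3}J_1(t),
\]
with $J_1,J_2$ as in \eqref{eq:27}--\eqref{eq:28}. The aim is to show that $|\mathcal{R}(t)|\le C_5E(t)$ for a constant $C_5$ depending only on the fixed parameters $N_1,N_2,\mu,\varepsilon_3$, the system coefficients, and $g$. Importantly, $C_5$ must not depend on $N$. Then, enlarging $N$ if necessary (so that $N>2C_5$ and all earlier constraints on $N$ still hold), \eqref{eq:40} follows with
\[
\gamma_2=N-C_5\geq \frac{N}{2}, \qquad \gamma_1=N+C_5 .
\]

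Each term of $\mathcal{R}$ is a product of two factors, so I would bound it by the Cauchy--Schwarz and Young inequalities.

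\begin{itemize}
\item For $\phi_t$ and $\psi_t$ (multiplied by $\phi,\psi,w,\phi_x+\psi,\psi_x,\phi_x$), the velocities are controlled by $\rho_1\|\phi_t\|^2$ and $\rho_2\|\psi_t\|^2$. The displacement terms are handled as follows.
  \begin{itemize}
  \item $\|\psi\|\le\|\psi_x\|$ by Friedrichs' inequality.
  \item $\|w\|\le\|\psi_x\|$ by \eqref{eq:19}.
  \item $\|\phi_x\|^2\le 2\|\phi_x+\psi\|^2+2\|\psi_x\|^2$ by \eqref{eq:37}.
  \item $\|\phi\|\le\|\phi_x\|$ by Friedrichs' inequality.
  \item $|q|\le2$.
  \end{itemize}
  Hence every such product is at most a constant times $e_1(t)$, which is at most a constant times $E(t)$ because $\hat b>0$ by Assumption \ref{ass:Assumption2}.
\item For the memory terms in $I_2$, $J$ and $J_1$, use Cauchy--Schwarz in $s$:
  \[
  \Bigl(\int_0^\infty g\,\eta^t_x\,\mathrm{d}s\Bigr)^2\le g_0\int_0^\infty g\,|\eta^t_x|^2\,\mathrm{d}s ,
  \]
  and similarly for $\eta^t$ itself after applying Friedrichs' inequality in $x$ as in \eqref{eq:12}. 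Note that here I use the weight $g$ and the constant $g_0$, not Lemma \ref{lem:Lemma3-6}. This bounds these terms by a constant times $e_2(t)\le 2E(t)$.
\end{itemize}

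Summing gives $|\mathcal{R}(t)|\le C_5\bigl(e_1(t)+e_2(t)\bigr)\le C_5'E(t)$, using
\[
E(t)\ge \tfrac12\min\{\rho_1,\rho_2,k,\hat b,1\}\bigl(e_1(t)+e_2(t)\bigr).
\]

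The only delicate point is the bookkeeping of parameter dependence. The choices in the preceding proposition fix $\varepsilon_3,N_1,\varepsilon_1,N_2,\varepsilon_2$ before $N$. However, $\varepsilon_4=\rho_1/(12N)$ depends on $N$, and it does not appear in $\mathcal{R}$. Therefore $C_5$ is indeed independent of $N$, and the final choice of $N$ can simply be taken as the maximum of the earlier threshold and $2C_5$. This keeps \eqref{eq:39} valid while giving \eqref{eq:40}. As a by-product, the same computation shows that $\|\cdot\|_{\mathrm H}$ and $\|\cdot\|_{\mathcal H}$ are equivalent: $\|X\|_{\mathrm H}^2=2E$, and $e_1+e_2$ is comparable to $\|X\|_{\mathcal H}^2$ via \eqref{eq:37} and the bound $\|\phi_x+\psi\|\le\|\phi_x\|+\|\psi_x\|$.
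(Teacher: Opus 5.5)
Your proposal is correct and follows the paper's argument: bound $|L(t)-NE(t)|$ term by term via Young's and Friedrichs' inequalities and \eqref{eq:37} by a multiple of $E(t)$, then take $N$ large. The differences are minor. You bound the memory integrals directly with weight $g$ and constant $g_0$, whereas the paper uses Lemma \ref{lem:Lemma3-6} followed by $g^{\delta_1}\le g^{\delta_1-1}(0)\,g$. You also verify explicitly that the perturbation constant is independent of $N$ (since $\varepsilon_4$ does not enter $L-NE$), a point the paper leaves implicit; your verification is correct.
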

\begin{pf}
	From \eqref{eq:35} we deduce that, for all $t\in {\mathbb{R}}_{\geq0}$,
	\begin{align}
		\left| L(t)-NE(t) \right|&\le {{N}_{1}}\left| {{I}_{1}}(t) \right|+{{N}_{2}}\left| {{I}_{2}}(t) \right|+\left| J(t) \right| +\frac{2{{\varepsilon }_{3}}{{\rho }_{1}}}{k}\int_{0}^{1}{\left| {{\phi }_{t}(x, t)}{{\phi }_{x}(x, t)} \right|\mathrm{d}x}+\mu \left| K(t) \right|\nonumber\\
		&\quad +\frac{{{\rho }_{2}}}{2{{\varepsilon }_{3}}}\int_{0}^{1}{\left| {{\psi }_{t}(x, t)}\left(\hat{b}{{\psi }_{x}(x, t)}+\int_{0}^{\infty }{g(s)\eta _{x}^{t}}(x, s)\mathrm{d}s\right) \right|dx}.
	\end{align}
	Combining \eqref{eq:13}-\eqref{eq:16} with \eqref{eq:37}, Young's inequality, Friedrichs' inequality, and Lemma \ref{lem:Lemma3-6}, for all $t\in {\mathbb{R}}_{\geq0}$, we obtain
	\begin{align*}
		\left| {{I}_{1}}(t) \right|&\le \frac{{{\rho }_{2}}}{2}\int_{0}^{1}{\psi _{t}^{2}(x, t)\mathrm{d}x}+\frac{{{\rho }_{1}}}{2}\int_{0}^{1}{\phi _{t}^{2}(x, t)\mathrm{d}x}+\frac{{{\rho }_{1}}+{{\rho }_{2}}}{2}\int_{0}^{1}{\psi _{x}^{2}(x, t)\mathrm{d}x}, \\
		\left| {{I}_{2}}(t) \right|&\le \frac{{{\rho }_{2}}}{2}\int_{0}^{1}{\psi _{t}^{2}(x, t)\mathrm{d}x}+\frac{{{\rho }_{2}}}{2}{G_{0}}\int_{0}^{1}{\int_{0}^{\infty }{g^{{\delta_{1}}}(s){{\left| \eta _{x}^{t}(x, s) \right|}^{2}}\mathrm{d}s\mathrm{d}x}},\\
		\left| J(t) \right|&\le \frac{{{\rho }_{2}}}{2}\int_{0}^{1}{\psi _{t}^{2}(x, t)\mathrm{d}x}+\frac{{{\rho }_{2}}}{2}\int_{0}^{1}{{{({{\phi }_{x}(x, t)}+\psi(x, t) )}^{2}}\mathrm{d}x}+\frac{{{\rho }_{1}}\hat{b}}{2k}\int_{0}^{1}{\psi _{x}^{2}(x, t)\mathrm{d}x}+\frac{{{\rho }_{1}}\left(1+\hat{b}\right)}{2k}\int_{0}^{1}{\phi _{t}^{2}(x, t)\mathrm{d}x}\\
		&\quad+\frac{{{\rho }_{1}}}{2k}{G_{0}}\int_{0}^{1}{\int_{0}^{\infty }{g^{{\delta_{1}}}(s){{\left| \eta _{x}^{t}(x, s) \right|}^{2}}\mathrm{d}s\mathrm{d}x}},\\
		\left| K(t) \right|&\le {{\rho }_{1}}\int_{0}^{1}{{{({{\phi }_{x}(x, t)}+\psi(x, t) )}^{2}}\mathrm{d}x}+\left({{\rho }_{1}}+\frac{{{\rho }_{2}}}{2}\right)\int_{0}^{1}{\psi _{x}^{2}(x, t)\mathrm{d}x}+\frac{{{\rho }_{1}}}{2}\int_{0}^{1}{\phi _{t}^{2}(x, t)\mathrm{d}x}+\frac{{{\rho }_{2}}}{2}\int_{0}^{1}{\psi _{t}^{2}(x, t)\mathrm{d}x}.
	\end{align*}
	By the previously defined linear function~$q(x)=2-4x  (x\in [0, 1])$, together with \eqref{eq:37}, Young's inequality, Friedrichs' inequality, and Lemma \ref{lem:Lemma3-6}, we deduce that for all $t\in {\mathbb{R}}_{\geq0}$,
	\begin{align*}
		&\quad\frac{2{{\varepsilon }_{3}}{{\rho }_{1}}}{k}\int_{0}^{1}{\left| {{\phi }_{t}(x, t)}{{\phi }_{x}(x, t)} \right|\mathrm{d}x}\\
		&\le \frac{{{\varepsilon }_{3}}{{\rho }_{1}}}{k}\int_{0}^{1}{\phi _{t}^{2}(x, t)\mathrm{d}x}+\frac{2{{\varepsilon }_{3}}{{\rho }_{1}}}{k}\int_{0}^{1}{\psi _{x}^{2}(x, t)\mathrm{d}x}+\frac{2{{\varepsilon }_{3}}{{\rho }_{1}}}{k}\int_{0}^{1}{{{\left({{\phi }_{x}(x, t)}+\psi(x, t)\right)}^{2}}\mathrm{d}x},\\
		&\quad\frac{{{\rho }_{2}}}{2{{\varepsilon }_{3}}}\int_{0}^{1}{\left| {{\psi }_{t}(x, t)}\left(\hat{b}{{\psi }_{x}(x, t)}+\int_{0}^{\infty }{g(s)\eta _{x}^{t}}(x, s)\mathrm{d}s\right) \right|\mathrm{d}x}\\
		&\le \frac{{{\rho }_{2}}}{4{{\varepsilon }_{3}}}\int_{0}^{1}{\psi _{t}^{2}(x, t)\mathrm{d}x}+\frac{{{\rho }_{2}}{{{\hat{b}}}^{2}}}{2{{\varepsilon }_{3}}}\int_{0}^{1}{\psi _{x}^{2}(x, t)\mathrm{d}x}+\frac{{{\rho }_{2}}}{2{{\varepsilon }_{3}}}{G_{0}}\int_{0}^{1}{\int_{0}^{\infty }{g^{{\delta_{1}}}(s){{\left| \eta _{x}^{t}(x, s) \right|}^{2}}\mathrm{d}s\mathrm{d}x}}.
	\end{align*}
	Since ${\delta_{1}}\in \big[1, \frac{3}{2}\big)$ and $g\in C^{0}({{\mathbb{R}}_{\geq 0}}; {{\mathbb{R}}_{>0}}) \cap C^{1}({{\mathbb{R}}_{\geq 0}}; {{\mathbb{R}_{\leq0}}})$, it follows that
	\begin{equation*}
		\begin{split}
			\int_{0}^{1}{\int_{0}^{\infty }{g^{{\delta_{1}}}(s){{\left| \eta _{x}^{t}(x, s) \right|}^{2}}\mathrm{d}s\mathrm{d}x}}
			&\leq g^{{\delta_{1}}-1}(0)\int_{0}^{1}{\int_{0}^{\infty }{g(s){{\left| \eta _{x}^{t}(x, s) \right|}^{2}}\mathrm{d}s\mathrm{d}x}},\quad \forall t\in {\mathbb{R}}_{\geq0}.
		\end{split}
	\end{equation*}
	Thus we deduce that, for all $t\in {\mathbb{R}}_{\geq0}$,
	\begin{align}
		\left| L(t)-NE(t) \right|
		&\le \frac{{{\rho }_{2}}}{2}\left({{N}_{1}}+{{N}_{2}}+1+\mu +\frac{1}{2{{\varepsilon }_{3}}}\right)\int_{0}^{1}{\psi _{t}^{2}(x, t)\mathrm{d}x}+\frac{{{\rho }_{1}}}{2}\left({{N}_{1}}+\frac{\hat{b}+1+2{{\varepsilon }_{3}}}{k}+\mu \right)\int_{0}^{1}{\phi _{t}^{2}(x, t)\mathrm{d}x}\nonumber\\ 
		&\quad+\left(\frac{{{\rho }_{1}}+{{\rho }_{2}}}{2}{{N}_{1}}+\frac{{{\rho }_{1}}\hat{b}}{2k}+ \mu\left({{\rho }_{1}}+\frac{{{\rho }_{2}}}{2}\right)+\frac{2{{\rho }_{1}}{{\varepsilon }_{3}}}{k}+\frac{{{\rho }_{2}}{{{\hat{b}}}^{2}}}{2{{\varepsilon }_{3}}}\right)\int_{0}^{1}{\psi _{x}^{2}(x, t)\mathrm{d}x}\nonumber\\
		&\quad+\left(\frac{{{\rho }_{2}}}{2}+{{\rho }_{1}}\mu +\frac{2{{\rho }_{1}}{{\varepsilon }_{3}}}{k}\right)\int_{0}^{1}{{{\left({{\phi }_{x}(x, t)}+\psi(x, t) \right)}^{2}}\mathrm{d}x}\nonumber\\
		&\quad+\left(\frac{{{\rho }_{2}}}{2}{{N}_{2}}+\frac{{{\rho }_{1}}}{2k}+\frac{{{\rho }_{2}}}{2{{\varepsilon }_{3}}}\right)g^{{\delta_{1}}-1}(0){G_{0}}\int_{0}^{1}{\int_{0}^{\infty }{g(s){{\left| \eta _{x}^{t}(x, s) \right|}^{2}}\mathrm{d}s\mathrm{d}x}}.\label{eq:5.7}
	\end{align}
	According to the previously chosen values of $N_{i}, {{\lambda }_{i}}(i=1, 2), {{\varepsilon }_{j}}(j=1, 2, 3, 4)$, and $\mu$, together with \eqref{eq:5.7}, there exists~$\gamma\in {\mathbb{R}}_{>0}$ such that
	\begin{equation*}
		\left| L(t)-NE(t) \right|\leq {\gamma}E(t),\quad \forall t\in {\mathbb{R}}_{\geq0}.
	\end{equation*}
	That is, there exists a sufficiently large ${N}\in {\mathbb{R}}_{>0}$ and ${{\gamma }_{1}},{{\gamma }_{2}}\in {\mathbb{R}}_{>0}$ such that \eqref{eq:40} holds. $\hfill\square$
\end{pf}

\subsection{Stability analysis}\label{subsec:7}
In this subsection, we perform the stability analysis of system \eqref{eq:sys} by employing the Lyapunov method within the iISS framework, providing details for establishing the PiISS and EiISS estimates.
\par Before proceeding to the stability analysis, we first state the following two lemmas.
\begin{lemma}\label{lem:Lemma 6.1}Let $p\geq 1$,~$T\in{\mathbb{R}}_{>0}$, and $y_0\in {\mathbb{R}}_{\geq0}$. Let~$u\in C([0,T]; {\mathbb{R}}_{\geq0})$ with $\underset{s\to+\infty }{\mathop{\lim }}\,\int_{0}^{s}{u(\tau )\mathrm{d}\tau}=+\infty $ and $v\in L^{2}(0, T)$. Suppose that $y(t)$ is a nonnegative absolutely continuous function on~$[0, T]$ and satisfies the following differential inequality
	\begin{align*}
		\begin{cases}
			y'(t) \leq -u(t)y^{p}(t)+v(t) & \text{a.e. in } [0,T], \\
			y(0) = y_0,
		\end{cases}
	\end{align*}
	then
	\begin{equation*}
		y(t)\leq \beta (y_{0}, t)+2\int_{0}^{t}v(s)\mathrm{d}s,\quad \forall t\in [0, T],
	\end{equation*}
	where for any $t\in [0, T]$ the function $\beta(\cdot, t)$ is defined by
	\begin{align*}
		\beta (s, t)=
		\begin{cases}
			0,&\quad s=0, \\ 
			{{\left({{s}^{-\left({p}-1\right)}}+\left({p}-1\right)\int_{0}^{t}{u(\tau)\mathrm{d}\tau}\right)}^{-\frac{1}{{p}-1}}},&\quad s \in {\mathbb{R}}_{>0}. \\ 
		\end{cases} 
	\end{align*}
\end{lemma}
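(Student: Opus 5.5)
The plan is a comparison argument applied to the shifted quantity $z(t)=y(t)-V(t)$, where $V(t)=\int_0^t v(\tau)\mathrm{d}\tau$. I will assume $v\geq 0$ a.e. This is the situation in which the lemma is applied, since there $v$ is a nonnegative multiple of $\|f\|^2_{L^2}+\|h\|^2_{L^2}$. For $p=1$, $\beta(s,t)$ is read as its limit $s\exp\left(-\int_0^t u(\tau)\mathrm{d}\tau\right)$.

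Since $y$ is absolutely continuous and $v\in L^2(0,T)\subset L^1(0,T)$, $z$ is absolutely continuous, and
\begin{equation*}
z'(t)\leq -u(t)y^{p}(t)\quad\text{a.e. in }[0,T].
\end{equation*}
In particular $z$ is nonincreasing on $[0,T]$. The key observation is a lower bound valid wherever $y>2V$. On that set $z=y-V>y/2>0$ and $y\geq z$, so $y^p\geq z^p$. Since $u\geq0$, this gives the autonomous-type inequality
\begin{equation*}
z'(t)\leq -u(t)z^{p}(t).
\end{equation*}

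Fix $t\in[0,T]$. If $y(t)\leq 2V(t)$, the claim holds trivially. Otherwise, set $t_0=\sup\{s\in[0,t]: y(s)\leq 2V(s)\}$, with $t_0=0$ if this set is empty. By continuity, $y>2V$ on $(t_0,t]$, so $z>0$ there.

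For $p>1$ I apply the chain rule to $z^{-(p-1)}$. This is legitimate because $z$ is absolutely continuous and bounded away from $0$ on compact subintervals of $(t_0,t]$. It gives $\frac{\mathrm{d}}{\mathrm{d}s}z^{-(p-1)}\geq (p-1)u(s)$, and integrating yields
\begin{equation*}
z(t)\leq\left(z(t_0)^{-(p-1)}+(p-1)\int_{t_0}^{t}u(\tau)\mathrm{d}\tau\right)^{-\frac{1}{p-1}}.
\end{equation*}
For $p=1$ the same step is done with $\ln z$ and gives the exponential form. There are now two cases.
\begin{enumerate}[(i)]
\item If $t_0=0$ and $y>2V$ on $(0,t]$, then $z(0)=y_0$ and the bound above is exactly $\beta(y_0,t)$. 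Hence $y(t)=z(t)+V(t)\leq\beta(y_0,t)+V(t)$. When $y_0=0$ this case cannot actually occur: $z$ is nonincreasing with $z(0)=0$, so $z\leq0$, which contradicts $z>0$.
\item If $t_0>0$, continuity gives $y(t_0)=2V(t_0)$, so $z(t_0)=V(t_0)\leq V(t)$. Because $z$ is nonincreasing, $z(t)\leq V(t)$, and therefore $y(t)\leq 2V(t)$.
\end{enumerate}

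In every case $y(t)\leq\beta(y_0,t)+2\int_0^tv(\tau)\mathrm{d}\tau$, which is the claim. The hypothesis $\int_0^{s}u\to+\infty$ is not needed for the inequality itself. It only ensures that $\beta(y_0,\cdot)$ decays, so that $\beta\in\mathcal{KL}$.

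The main technical point is justifying the differentiation of $z^{-(p-1)}$ (or $\ln z$) for a merely absolutely continuous $z$ on the possibly half-open interval $(t_0,t]$. I would handle it by integrating over $[t_0+\epsilon,t]$, where $z\geq z(t)>0$ so the map $r\mapsto r^{-(p-1)}$ is Lipschitz on the range of $z$, and then letting $\epsilon\to0$ using the continuity of $z$.
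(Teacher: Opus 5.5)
The paper gives no proof of this lemma; it only cites it as a special case of \cite[Lemma 3]{Bi2025}. Your argument is therefore a self-contained alternative, and it is correct. Shifting by $V(t)=\int_0^t v(\tau)\mathrm{d}\tau$ makes $z=y-V$ nonincreasing. Comparing $z$ with the ODE $z'=-uz^p$ where $z$ is positive, and using monotonicity of $z$ elsewhere, gives the bound directly, without needing the machinery of the cited general result.

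Three remarks follow.

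\textbf{The hypothesis $v\geq 0$ is necessary.} Your added assumption $v\geq 0$ is not a convenience: without it the lemma as printed is false. Take $p=2$, $u\equiv 1$, $v\equiv -1$, $y_0=1$ and $y(t)=\tan(\pi/4-t)$ on $[0,T]$ with $T<\pi/4$. Then $y>0$ and $y'=-y^2+v$. However, $y(t)=1-2t+O(t^2)$, while $\beta(1,t)+2\int_0^t v(s)\mathrm{d}s=\frac{1}{1+t}-2t=1-3t+O(t^2)$, so the conclusion fails for small $t>0$. In the paper, $v=C_3\|f(\cdot,t)\|_{L^2(0,1)}^2+C_4\|h(\cdot,t)\|_{L^2(0,1)}^2\geq 0$, so nothing downstream is affected.

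\textbf{Integrability of $v$.} You only use $v\in L^1(0,T)$, and this is exactly what the application supplies. The standing assumption $f,h\in L^2(\mathbb{R}_{\geq0};L^2(0,1))$ makes $t\mapsto\|f(\cdot,t)\|^2_{L^2(0,1)}$ integrable but not necessarily square-integrable. So the $L^2$ hypothesis in the statement is not literally met where the lemma is used, whereas your version covers that case.

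\textbf{A shorter argument with a sharper constant.} Since $z$ is nonincreasing, $z(t)>0$ already forces $z\geq z(t)>0$ on all of $[0,t]$, including $z(0)=y_0>0$. Then $y=z+V\geq z>0$ gives $z'\leq -uz^p$ a.e.\ on $[0,t]$, with no reference to the threshold $2V$. Comparing from $s=0$ yields $y(t)\leq \beta(y_0,t)+\int_0^t v(s)\mathrm{d}s$ whenever $z(t)>0$. When $z(t)\leq 0$, one has trivially $y(t)\leq V(t)$. Hence the case split on $t_0$ and the $\epsilon$-regularization are unnecessary, and the factor $2$ in the statement is pure slack.
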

The proof of Lemma \ref{lem:Lemma 6.1} can be found in \cite{Bi2025}, as Lemma \ref{lem:Lemma 6.1} is a special case of \cite[Lemma 3]{Bi2025}.
\par In view of the definitions of $\|\cdot\|_{\mathcal{H}}$ and $\|\cdot\|_\mathrm{H}$, the following lemma establishes the equivalence of these two norms.
\begin{lemma}\label{rem:4.2}
	In the Hilbert space $\mathcal{H}$, $\|\cdot\|_{\mathcal{H}}$ and $\|\cdot\|_\mathrm{H}$ are equivalent norms. 
\end{lemma}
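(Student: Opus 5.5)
The plan is to prove two-sided bounds $c_1\|X\|_{\mathcal{H}}^2\le \|X\|_\mathrm{H}^2\le c_2\|X\|_{\mathcal{H}}^2$ for every $X=(u,v,w,z,l)\in\mathcal{H}$. The components $v$, $z$, $w_x$ and $l$ enter both norms with fixed positive weights $\rho_1,\rho_2,\hat b,1$. Here $\hat b=b-g_0>0$ by Assumption~\ref{ass:Assumption2}. So the only real work is comparing the coupled term $k\|u_x+w\|_{L^2(0,1)}^2$ with $\|u_x\|_{L^2(0,1)}^2$. Throughout, Friedrichs' inequality $\|w\|_{L^2(0,1)}\le\|w_x\|_{L^2(0,1)}$ for $w\in H_0^1(0,1)$ will be used, exactly as in \eqref{eq:19}.

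\textbf{Upper bound.} Expanding the square and using Friedrichs' inequality gives
\begin{align*}
\|u_x+w\|_{L^2(0,1)}^2\le 2\|u_x\|_{L^2(0,1)}^2+2\|w_x\|_{L^2(0,1)}^2 .
\end{align*}
Consequently $\|X\|_\mathrm{H}^2\le c_2\|X\|_{\mathcal{H}}^2$ with $c_2=\max\{2k,\rho_1,\rho_2,\hat b+2k,1\}$.

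\textbf{Lower bound.} I would argue as in \eqref{eq:37}. Friedrichs' inequality again gives
\begin{align*}
\|u_x\|_{L^2(0,1)}^2\le 2\|u_x+w\|_{L^2(0,1)}^2+2\|w_x\|_{L^2(0,1)}^2 .
\end{align*}
Hence
\begin{align*}
\|X\|_{\mathcal{H}}^2\le 2\|u_x+w\|_{L^2(0,1)}^2+3\|w_x\|_{L^2(0,1)}^2+\|v\|_{L^2(0,1)}^2+\|z\|_{L^2(0,1)}^2+\|l\|_{L_g^2(\mathbb{R}_{\geq0};H_0^1(0,1))}^2 .
\end{align*}
Each term on the right is bounded by a multiple of the corresponding term of $\|X\|_\mathrm{H}^2$. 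This yields $\|X\|_{\mathcal{H}}^2\le c_1^{-1}\|X\|_\mathrm{H}^2$ with $c_1^{-1}=\max\{2/k,\,3/\hat b,\,1/\rho_1,\,1/\rho_2,\,1\}$.

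There is no genuine obstacle; the lemma reduces to these two elementary estimates. The only point to watch is that the lower bound needs $\hat b>0$, which is where Assumption~\ref{ass:Assumption2} is used. It also depends on $w_x$ being present in $\|X\|_\mathrm{H}$ to absorb the lower-order part of the coupling $u_x+w$. Taking square roots gives the equivalence of the norms, with constants depending only on $\rho_1,\rho_2,k,b$ and $g_0$.
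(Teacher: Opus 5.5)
Your proof is correct. Your upper bound is the same as the paper's, but your lower bound takes a different and more elementary route.

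The paper's lower bound works as follows:
\begin{itemize}
\item It expands $\|u_x+w\|_{L^2(0,1)}^2=\|u_x\|_{L^2(0,1)}^2+2\int_0^1u_xw\,\mathrm{d}x+\|w\|_{L^2(0,1)}^2$.
\item It bounds the cross term by Young's inequality with a parameter $\varepsilon_0\in(\pi^{-2},1)$.
\item It then needs the sharp Poincar\'e constant, $\|w\|_{L^2(0,1)}^2\le\pi^{-2}\|w_x\|_{L^2(0,1)}^2$, so that both leftover coefficients $1-\varepsilon_0$ and $1-\frac{1}{\varepsilon_0\pi^2}$ are positive.
\item Finally it bounds all weights below by the single constant $\tilde\alpha_0=\min\{k,\rho_1,\rho_2,\hat b,1\}$.
\end{itemize}

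You instead use the triangle-type inequality in the reverse direction, $\|u_x\|_{L^2(0,1)}^2\le2\|u_x+w\|_{L^2(0,1)}^2+2\|w_x\|_{L^2(0,1)}^2$, which is exactly the manipulation in \eqref{eq:37}. You then compare the two norms component by component. This needs only the crude Friedrichs constant $1$ used elsewhere in the paper, and no tuning parameter, so your argument is simpler and self-contained.

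What the paper's route buys is a sharper factor on the coupled block, for example $1-\pi^{-1}$ at the optimal choice $\varepsilon_0=\pi^{-1}$. Your componentwise bookkeeping, by contrast, avoids collapsing all weights into one minimum. The resulting explicit constants differ, but that is immaterial for norm equivalence.
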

\begin{pf}For any $X=(u, v, w, z, l)\in \mathcal{H}$, by Young's inequality and Friedrichs' inequality, we obtain
	\begin{align}
		\int_{0}^{1}{\left(u_{x}+w\right)^{2}}\mathrm{d}x&\leq 2\int_{0}^{1}{u_{x}^{2}}\mathrm{d}x+2\int_{0}^{1}{w^{2}}\mathrm{d}x\nonumber\\
		&\leq 2\int_{0}^{1}{u_{x}^{2}}\mathrm{d}x+2\int_{0}^{1}{w_{x}^{2}}\mathrm{d}x,\quad \forall t\in {\mathbb{R}}_{\geq0}.\label{eq:6.16}
	\end{align}
	Similarly, for~$\varepsilon_{0}\in\left(\frac{1}{\pi^{2}}, 1\right)$, we obtain
	\begin{align}
		2\int_{0}^{1}{\left| {{u}_{x}}w  \right|}\mathrm{d}x&\le {\varepsilon_{0}}\int_{0}^{1}{u_{x}^{2}}\mathrm{d}x+\frac{1}{{\varepsilon_{0}}}\int_{0}^{1}{{{w}^{2}}}\mathrm{d}x\nonumber\\
		&\le {\varepsilon_{0}}\int_{0}^{1}{u_{x}^{2}}\mathrm{d}x+\frac{1}{{\varepsilon_{0}}\pi^{2}}\int_{0}^{1}{w _{x}^{2}}\mathrm{d}x,\quad \forall t \in {\mathbb{R}}_{\geq0}.\label{eq:6.17}
	\end{align}
	From \eqref{eq:6.17}, it follows that for all $t \in {\mathbb{R}}_{\geq0}$,
	\begin{align}
		2\int_{0}^{1}{{{u}_{x}}w}\mathrm{d}x&\geq -{\varepsilon_{0}}\int_{0}^{1}{u _{x}^{2}}\mathrm{d}x-\frac{1}{{\varepsilon_{0}}\pi^{2}}\int_{0}^{1}{w_{x}^{2}}\mathrm{d}x.\label{eq:6.18}
	\end{align}
	Set $\alpha_{0}=\max\left\{k, \rho_{1}, \rho_{2}, \hat{b}, 1\right\}$ and $\tilde{\alpha}_{0}=\min\left\{k, \rho_{1}, \rho_{2}, \hat{b}, 1\right\}$. It follows that
	\begin{align*}
		\|X\|_\mathrm{H}^{2}&\leq \!\alpha_{0}\!\left(\|u_x+w\|_{L^2(0, 1)}^{2}+\|v\|_{L^2(0, 1)}^{2}+\|w_x\|_{L^2(0, 1)}^{2}+\|z\|_{L^2(0, 1)}^{2}+\|l\|_{L_{g}^{2}(\mathbb{R}_{\geq0}; H_{0}^{1}(0, 1))}^{2}\!\right)\\
		&\leq \!\alpha_{0}\!\left(2\|u_x\|_{L^2(0, 1)}^{2}+\|v\|_{L^2(0, 1)}^{2}+3\|w_x\|_{L^2(0, 1)}^{2}+\|z\|_{L^2(0, 1)}^{2}+\|l\|_{L_{g}^{2}(\mathbb{R}_{\geq0}; H_{0}^{1}(0, 1))}^{2}\!\right)\\
		&\leq \!3\alpha_{0}\!\left(\|u_x\|_{L^2(0, 1)}^{2}+\|v\|_{L^2(0, 1)}^{2}+\|w_x\|_{L^2(0, 1)}^{2}+\|z\|_{L^2(0, 1)}^{2}+\|l\|_{L_{g}^{2}(\mathbb{R}_{\geq0}; H_{0}^{1}(0, 1))}^{2}\!\right)\\
		&=\!3\alpha_{0}\|X\|_{\mathcal{H}}^{2},
	\end{align*}
	as well as
	\begin{align*}
		\|X\|_\mathrm{H}^{2}
		&\geq \!\tilde{\alpha}_{0}\!\left(\|u_x+w\|_{L^2(0, 1)}^{2}+\|v\|_{L^2(0, 1)}^{2}+\|w_x\|_{L^2(0, 1)}^{2}+\|z\|_{L^2(0, 1)}^{2}+\|l\|_{L_{g}^{2}(\mathbb{R}_{\geq0}; H_{0}^{1}(0, 1))}^{2}\!\right)\\
		&\geq \!\tilde{\alpha}_{0}\!\left((1-\varepsilon_{0})\|u_x\|_{L^2(0, 1)}^{2}+\|v\|_{L^2(0, 1)}^{2}+\left(1-\frac{1}{{\varepsilon_{0}}\pi^{2}}\right)\|w_x\|_{L^2(0, 1)}^{2}+\|z\|_{L^2(0, 1)}^{2}+\|l\|_{L_{g}^{2}(\mathbb{R}_{\geq0}; H_{0}^{1}(0, 1))}^{2}\!\right)\\
		&\geq \!\tilde{\alpha}_{0}\min\left\{(1-\varepsilon_{0}), \left(1-\frac{1}{{\varepsilon_{0}}\pi^{2} }\right)\right\}\|X\|_{\mathcal{H}}^{2},
	\end{align*}
	then
	\begin{align*}
		\tilde{\alpha}_{0}\min\left\{(1-\varepsilon_{0}), \left(1-\frac{1}{{\varepsilon_{0}}\pi^{2} }\right)\right\}\|X\|_{\mathcal{H}}^{2}	\leq \|X\|_\mathrm{H}^{2}\leq 3\alpha_{0}\|X\|_{\mathcal{H}}^{2}.
	\end{align*}
	Therefore, $\|\cdot\|_{\mathcal{H}}$ and $\|\cdot\|_\mathrm{H}$ are equivalent norms.$\hfill\square$
\end{pf}
Using the definition of $\|\cdot\|_{\mathcal{H}}$ and $\|\cdot\|_\mathrm{H}$ in the Hilbert space $\mathcal{H}$ together with Lemma \ref{rem:4.2}, the state $X(t)=(\phi, \phi_t, \psi, \psi_t, \eta^t)\in \mathcal{H}$ admits the following estimate:
\begin{equation*}
	\frac{\tilde{\alpha}_{0}}{2}\min\left\{(1-\varepsilon_{0}), \left(1-\frac{1}{{\varepsilon_{0}}\pi^{2} }\right)\right\}\|X\|_{\mathcal{H}}^{2}	\leq E(t)=\frac{1}{2}\|X\|_\mathrm{H}^{2}\leq \frac{3}{2}\alpha_{0}\|X\|_{\mathcal{H}}^{2}.
\end{equation*}
Set~${{\alpha }_{1}}=\frac{{\alpha_{0}}}{2}$ and ${{\alpha }_{2}}=\frac{\tilde{\alpha}_{0}}{2}$. Then it holds that
\begin{align}
	&\frac{1}{{{\alpha }_{1}}}E(t)\le e_{1}(t)+e_{2}(t)\le \frac{1}{{{\alpha }_{2}}}E(t),\quad\forall t\in {\mathbb{R}}_{\geq0},\label{eq:41}\\
	&{{\alpha }_{2}}\min\left\{(1-\varepsilon_{0}), \left(1-\frac{1}{{\varepsilon_{0}}\pi^{2} }\right)\right\}\|X\|_{\mathcal{H}}^{2}	\leq E(t)\leq 3\alpha_{1}\|X\|_{\mathcal{H}}^{2},\quad\forall t\in {\mathbb{R}}_{\geq0}.\label{eq:42}
\end{align}
\par In what follows, we give a complete proof of Theorem \ref{thm:main result} and establish the iISS estimates of system \eqref{eq:sys}.\\
\begin{theoremproof}{We discuss two cases separately: $\delta_{1}\in \left(1, \frac{3}{2}\right)$ and $\delta_{1}=1$.}
	\paragraph*{\textbf{Case 1: $\delta_{1}\in \left(1, \frac{3}{2}\right)$.}}
	From \eqref{eq:39} it is easy to see that
	\begin{equation*}
		\frac{\mathrm{d}}{\mathrm{d}t}L(t)\le {C_3}\int_{0}^{1}{{{f}^{2}}(x, t)\mathrm{d}x}+{C_4}\int_{0}^{1}{{{h}^{2}}(x, t)\mathrm{d}x},\quad \forall t\in \mathbb{R}_{>0}. 
	\end{equation*}
	Integrating the above inequality with respect to time~$t$, we obtain
	\begin{equation}
		L(t)\le L(0)+{C_3}\int_{0}^{t}{\left\| f(\cdot, \tau ) \right\|_{{{L}^{2}(0, 1)}}^{2}\mathrm{d}\tau }+{C_4}\int_{0}^{t}{\left\| h(\cdot, \tau ) \right\|_{{{L}^{2}(0, 1)}}^{2}\mathrm{d}\tau },\quad \forall t\in {\mathbb{R}}_{\geq0}.\label{eq:46}
	\end{equation}
	Let~$M(t)=L(0)+{C_3}\int_{0}^{t}{\left\| f(\cdot, \tau ) \right\|_{{{L}^{2}(0, 1)}}^{2}\mathrm{d}\tau }+{C_4}\int_{0}^{t}{\left\| h(\cdot, \tau ) \right\|_{{{L}^{2}(0, 1)}}^{2}\mathrm{d}\tau }$. From Assumption \ref{ass:Assumption1}, it follows that $\int_{0}^{\infty}\|f(\cdot, t)\|_{L^{2}(0, 1)}^{2}\mathrm{d}t<+\infty$ and $\int_{0}^{\infty}\|h(\cdot, t)\|_{L^{2}(0, 1)}^{2}\mathrm{d}t<+\infty$. This implies
	\begin{align}
		L(t)\le M(t)\leq \underset{t\geq0}{\mathop{\sup}}M(t)<+\infty, \quad \forall t\in {\mathbb{R}}_{\geq0}.\label{eq:6.17a}
	\end{align}
	From \eqref{eq:5}, \eqref{eq:40}, and \eqref{eq:41}, it is easy to see that 
	\begin{align}
		e_{1}(t)&\leq \frac{M(t)}{{{\gamma }_{2}}{{\alpha }_{2}}}<+\infty ,\quad \forall t\in {\mathbb{R}}_{\geq0},\label{eq:47}\\
		e_{2}(t)&\le \frac{2M(t)}{{{\gamma }_{2}}}<+\infty,\quad \forall t\in {\mathbb{R}}_{\geq0}.\nonumber
	\end{align}
	Let $m_0=\underset{\tau\geq0}{\mathop{\sup}} \left\|\psi_{0x}(\cdot, \tau) \right\|_{L^2(0, 1)}$. From \eqref{eq:his}, \eqref{eq:47}, and Assumption \ref{ass:Assumption1}, we deduce that
	\begin{align}
		&\quad \int_{0}^{\infty }{{{g}^{\frac{1}{2}}}(s)\int_{0}^{1}{{{\left| \eta _{x}^{t}(x, s) \right|}^{2}}\mathrm{d}x}}\mathrm{d}s\nonumber\\
		&=\int_{0}^{\infty }{{{g}^{\frac{1}{2}}}(s)\int_{0}^{1}{{{\left| {{\psi }_{x}}(x, t)-{{\psi }_{x}}(x, t-s) \right|}^{2}}\mathrm{d}x}}\mathrm{d}s\nonumber\\
		&\leq 2\int_{0}^{\infty }{{{g}^{\frac{1}{2}}}(s)\int_{0}^{1}{\psi _{x}^{2}(x, t)\mathrm{d}x}}\mathrm{d}s+2\int_{0}^{\infty }{{{g}^{\frac{1}{2}}}(s)\int_{0}^{1}{\psi _{x}^{2}(x, t-s)\mathrm{d}x}}\mathrm{d}s\label{eq:6.6}\\
		&\leq 2\frac{M(t)}{{{\gamma }_{2}}{{\alpha }_{2}}}\int_{0}^{\infty }{{{g}^{\frac{1}{2}}}(s)\mathrm{d}s}+2\int_{0}^{t}{{{g}^{\frac{1}{2}}}(s)\int_{0}^{1}{\psi _{x}^{2}(x, t-s)\mathrm{d}x}}\mathrm{d}s+2\int_{t}^{\infty }{{{g}^{\frac{1}{2}}}(s)\int_{0}^{1}{\psi _{x}^{2}(x, t-s)\mathrm{d}x}}\mathrm{d}s\nonumber\\
		&\leq 2\frac{M(t)}{{{\gamma }_{2}}{{\alpha }_{2}}}\int_{0}^{\infty }{{{g}^{\frac{1}{2}}}(s)\mathrm{d}s}+2\int_{0}^{t}{{{g}^{\frac{1}{2}}}(s)\int_{0}^{1}{\psi _{x}^{2}(x, t-s)\mathrm{d}x}}\mathrm{d}s+2\int_{t}^{\infty }{{{g}^{\frac{1}{2}}}(s)\int_{0}^{1}{\psi _{0x}^{2}(x, s-t)\mathrm{d}x}}\mathrm{d}s\nonumber\\
		&\leq 2\frac{M(t)}{{{\gamma }_{2}}{{\alpha }_{2}}}\int_{0}^{\infty }{{{g}^{\frac{1}{2}}}(s)\mathrm{d}s}+\frac{2}{{{\gamma }_{2}}{{\alpha }_{2}}}\int_{0}^{t}{{{g}^{\frac{1}{2}}}(s)M(t-s)\mathrm{d}s}+2\int_{0}^{\infty }{{{g}^{\frac{1}{2}}}(t+\tau )\int_{0}^{1}{\psi _{0x}^{2}(x, \tau )\mathrm{d}x}}\mathrm{d}\tau\nonumber\\
		&\leq 4\frac{M(t)}{{{\gamma }_{2}}{{\alpha }_{2}}}\int_{0}^{\infty }{{{g}^{\frac{1}{2}}}(s)\mathrm{d}s}+2\underset{\tau\geq0}{\mathop{\sup}} \left\|\psi_{0x}(\cdot, \tau) \right\|_{L^2(0, 1)}^{2}\int_{0}^{\infty }{{{g}^{\frac{1}{2}}}(\tau)}\mathrm{d}\tau\nonumber\\
		&\le \left(4\frac{M(t)}{{{\gamma }_{2}}{{\alpha }_{2}}}+2m_{0}^{2}\right)\int_{0}^{\infty }{{{g}^{\frac{1}{2}}}(s)\mathrm{d}s},\quad \forall t\in {\mathbb{R}}_{\geq0}.
	\end{align}
	Let $G_1=\int_{0}^{\infty }{{{g}^{\frac{1}{2}}}(s)\mathrm{d}s}$ and $\alpha_{3}=\max\left\{\frac{4}{{{\gamma }_{2}}{{\alpha }_{2}}}G_1, 2G_1\right\}$. Then we have
	\begin{equation*}
		\int_{0}^{\infty }{{{g}^{\frac{1}{2}}}(s)\int_{0}^{1}{{{\left| \eta _{x}^{t}(x, s) \right|}^{2}}\mathrm{d}x}}\mathrm{d}s\leq \alpha_{3}\left(M(t)+m_0^2\right),\quad \forall t\in {\mathbb{R}}_{\geq0}.
	\end{equation*}
	For all $t\in {\mathbb{R}}_{\geq0}$, by H\"{o}lder's inequality we obtain
	\begin{align}
		\int_{0}^{1}{\int_{0}^{\infty }{g(s){{\left| \eta _{x}^{t}(x, s) \right|}^{2}}}}\mathrm{d}s\mathrm{d}x
		&=\int_{0}^{1}{\int_{0}^{\infty }{{{g}^{\frac{{{\delta }_{1}}-1}{2{{\delta }_{1}}-1}}}(s)}}{{\left| \eta _{x}^{t}(x, s) \right|}^{\frac{4({{\delta }_{1}}-1)}{2{{\delta }_{1}}-1}}}{{g}^{\frac{{\delta }_{1}}{2{{\delta }_{1}}-1}}}(s){{\left| \eta _{x}^{t}(x, s) \right|}^{\frac{2}{2{{\delta }_{1}}-1}}}\mathrm{d}s\mathrm{d}x \nonumber\\
		&\le {{\left(\int_{0}^{1}{\int_{0}^{\infty }{{{g}^{\frac{1}{2}}}(s)}}{{\left| \eta _{x}^{t}(x, s) \right|}^{2}}\mathrm{d}s\mathrm{d}x \right)}^{\frac{2\left({{\delta }_{1}}-1\right)}{2{{\delta }_{1}}-1}}} {{\left(\int_{0}^{1}{\int_{0}^{\infty }{{{g}^{{{\delta }_{1}}}}(s)}}{{\left| \eta _{x}^{t}(x, s) \right|}^{2}}\mathrm{d}s\mathrm{d}x\right)}^{\frac{1}{2{{\delta }_{1}}-1}}},\label{eq:6.7}
	\end{align}
	which implies that
	\begin{align}
		\left(\int_{0}^{1}{\int_{0}^{\infty }{g(s){{\left| \eta _{x}^{t}(x, s) \right|}^{2}}}}\mathrm{d}s\mathrm{d}x\right)^{2{\delta }_{1}-1}&\leq \left(\alpha_{3}\left(M(t)+m_0^2\right)\right)^{2({\delta }_{1}-1)}\int_{0}^{1}{\int_{0}^{\infty }{{{g}^{{{\delta }_{1}}}}(s)}}{{\left| \eta _{x}^{t}(x, s) \right|}^{2}}\mathrm{d}s\mathrm{d}x.\label{eq:6.8}
	\end{align}
	From \eqref{eq:41}, \eqref{eq:47}, and \eqref{eq:6.8} we obtain
	\begin{align*}
		{{E}^{2{\delta_{1}}-1}}(t)&\le \alpha _{1}^{2{\delta_{1}}-1}{{\left(e_{1}(t)+e_{2}(t)\right)}^{2{\delta_{1}}-1}} \\ 
		& \le {{2}^{2({\delta_{1}}-1)}}\alpha _{1}^{2{\delta_{1}}-1}{{\left(e_{1}(t)\right)}^{2{\delta_{1}}-1}}+{{2}^{2({\delta_{1}}-1)}}\alpha _{1}^{2{\delta_{1}}-1}{\left(e_{2}(t)\right)}^{2{\delta_{1}}-1}\\
		& \le \alpha _{1}^{2{\delta_{1}}-1}{\left(\frac{2M(t)}{{{\gamma }_{2}}{{\alpha }_{2}}}\right)}^{2({\delta_{1}}-1)}e_{1}(t)+\alpha _{1}^{2{\delta_{1}}-1}\left(2\alpha_{3}\left(M(t)+m_0^2\right)\right)^{2(\delta_{1}-1)}\int_{0}^{1}{\int_{0}^{\infty }{{{g}^{{\delta_{1}}}}(s){{\left| \eta _{x}^{t}(x, s) \right|}^{2}}}}\mathrm{d}s\mathrm{d}x.
	\end{align*}
	Let $\alpha_{4}=\max\left\{\alpha _{1}^{2{\delta_{1}}-1}{\left(\frac{2}{{{\gamma }_{2}}{{\alpha }_{2}}}\right)}^{2({\delta_{1}}-1)}, \alpha _{1}^{2{\delta_{1}}-1}(2\alpha_{3})^{2({\delta_{1}}-1)}\right\}$. Then for all $t\in {\mathbb{R}}_{\geq0}$, we have
	\begin{align}
		{{E}^{2{\delta_{1}}-1}}(t)
		&\leq \alpha_{4}\left(M(t)+m_0^2\right)^{2(\delta_{1}-1)}\left(e_{1}(t)+\int_{0}^{1}{\int_{0}^{\infty }{{{g}^{{\delta_{1}}}}(s){{\left| \eta _{x}^{t}(x, s) \right|}^{2}}}}\mathrm{d}s\mathrm{d}x\right).\label{eq:6.9}
	\end{align}
	From \eqref{eq:39}, \eqref{eq:40}, and \eqref{eq:6.9} we obtain
	\begin{align*}
		\frac{\mathrm{d}}{\mathrm{d}t}L(t)&\le\! -\!\frac{{{\sigma }_{1}}}{\gamma _{1}^{2{\delta_{1}}\!-\!1}}\frac{1}{{{\alpha }_{4}}{\left( M(t)\!+\! m_0^2\right)^{2(\delta_{1}\!-\!1)}}}{{L}^{2{\delta_{1}}\!-\!1}}(t)\!+\!{C_3}\!\int_{0}^{1}\!{{{f}^{2}}(x, t)\mathrm{d}x}\!+\!{C_4}\!\int_{0}^{1}\!{{{h}^{2}}(x, t)\mathrm{d}x}.
	\end{align*}
	Let~$Q(t)=\dfrac{{{\sigma }_{1}}}{\gamma _{1}^{2{\delta_{1}}-1}{{\alpha }_{4}}\left(M(t)+m_0^2\right)^{2(\delta_{1}-1)}}$. Then
	\begin{equation}
		\frac{\mathrm{d}}{\mathrm{d}t}L(t)\le -Q(t){{L}^{2{\delta_{1}}-1}}(t)+{C_3}\int_{0}^{1}{{{f}^{2}}(x, t)\mathrm{d}x}+{C_4}\int_{0}^{1}{{{h}^{2}}(x, t)\mathrm{d}x},\quad \forall t\in {\mathbb{R}}_{>0}.\label{eq:50}
	\end{equation}
	Since $\delta_{1}\in \left(1, \frac{3}{2}\right)$, it follows that $2\delta_{1}-1>1$. By Assumption \ref{ass:Assumption1} and \eqref{eq:6.17a}, we deduce that $Q(t)$ in \eqref{eq:50} satisfies
	\begin{equation*}
		\begin{split}
			\underset{s\to+\infty }{\mathop{\lim }}\,\int_{0}^{s}{{Q}(\tau )\mathrm{d}\tau}&=\underset{s\to+\infty }{\mathop{\lim }}\,\int_{0}^{s}{\frac{{{\sigma }_{1}}}{\gamma _{1}^{2{\delta_{1}}-1}{{\alpha }_{4}}\left(M(\tau)+m_0^2\right)^{2(\delta_{1}-1)}}\mathrm{d}\tau}\\
			&\geq \underset{s\to+\infty }{\mathop{\lim }}\,\int_{0}^{s}{\frac{{{\sigma }_{1}}}{\gamma _{1}^{2{\delta_{1}}-1}{{\alpha }_{4}}\left(\underset{\tau\geq0}{\mathop{\sup}}M(\tau)+m_0^2\right)^{2(\delta_{1}-1)}}\mathrm{d}\tau}\\
			&=+\infty.
		\end{split}
	\end{equation*}
	Let $m_1=2({\delta_{1}}-1)\left(\dfrac{{\sigma }_{1}}{\alpha_{4}\gamma_{1}^{2\delta_{1}-1}}\right)$. Combining \eqref{eq:50} and Lemma \ref{lem:Lemma 6.1}, we obtain for all $t\in \mathbb{R}_{\geq0}$,
	\begin{equation*}
		L(t)\le \beta (L(0), t)+2{C_3}\int_{0}^{t}{\left\| f(\cdot, \tau ) \right\|_{{{L}^{2}(0, 1)}}^{2}\mathrm{d}\tau }+2{C_4}\int_{0}^{t}{\left\| h(\cdot, \tau ) \right\|_{{{L}^{2}(0, 1)}}^{2}\mathrm{d}\tau },
	\end{equation*}
	where
	\begin{align}
		\beta (L(0), t)&=\left(L^{-2({\delta_{1}}-1)}(0)+m_1\int_{0}^{t}{\frac{1}{\left(M(\tau )+m_0^2\right)^{2(\delta_{1}-1)}}}\mathrm{d}\tau\right)^{-\frac{1}{2(\delta_{1}-1)}}.\label{eq:53}
	\end{align}
	From the definition of $M(t)$, we deduce that
	\begin{align*}
		M(\tau)=L(0)+{{C}_{3}}\int_{0}^{\tau }{{{\left\| f(\cdot, s) \right\|}_{L^{2}(0, 1)}^{2}}\mathrm{d}s}+{{C}_{4}}\int_{0}^{\tau }{{\left\| h(\cdot, s) \right\|}_{L^{2}(0, 1)}^{2}}\mathrm{d}s.
	\end{align*}
	For any $T\in \mathbb{R}_{>0}$, we consider two cases.\\
	\quad {Case 1: }$L(0)+m_0^2\geq {{C}_{3}}\int_{0}^{T}{{{\left\| f(\cdot, s) \right\|}_{L^{2}(0, 1)}^{2}}\mathrm{d}s}+{{C}_{4}}\int_{0}^{T}{{\left\| h(\cdot, s) \right\|}_{L^{2}(0, 1)}^{2}}\mathrm{d}s$. In this case, it follows that
	\begin{align}
		M(\tau)+m_0^2&=L(0)+{{C}_{3}}\int_{0}^{\tau }{{{\left\| f(\cdot, s) \right\|}_{L^{2}(0, 1)}^{2}}\mathrm{d}s}+{{C}_{4}}\int_{0}^{\tau }{{\left\| h(\cdot, s) \right\|}_{L^{2}(0, 1)}^{2}}\mathrm{d}s+m_0^2\nonumber\\
		&\leq L(0)+{{C}_{3}}\int_{0}^{T}{{{\left\| f(\cdot, s) \right\|}_{L^{2}(0, 1)}^{2}}\mathrm{d}s}+{{C}_{4}}\int_{0}^{T}{{\left\| h(\cdot, s) \right\|}_{L^{2}(0, 1)}^{2}}\mathrm{d}s+m_{0}^{2}\nonumber\\
		&\leq 2\left(L(0)+m_0^2\right), \quad \forall \tau \in [0, T].\label{eq:6.13}
	\end{align}
	From \eqref{eq:53} and \eqref{eq:6.13}, we deduce that for all $t \in [0, T]$,
	\begin{align*}
		\beta (L(0), t)&\leq \left(L^{-2(\delta_{1}-1)}(0)+m_1\int_{0}^{t}{\left(\frac{1}{2\left(L(0)+m_0^2\right)}\right)^{2(\delta_{1}-1)}\mathrm{d}\tau}\right)^{-\frac{1}{2(\delta_{1}-1)}}\\
		&=\left(L^{-2(\delta_{1}-1)}(0)+\frac{m_1}{4^{\delta_{1}-1}}\left(L(0)+m_0^2\right)^{-2(\delta_{1}-1)}t\right)^{-\frac{1}{2(\delta_{1}-1)}}.
	\end{align*}
	Then for all $t\in [0, T]$, we obtain
	\begin{align}
		L(t)&\leq \left(L^{-2(\delta_{1}-1)}(0)+\frac{m_1}{4^{\delta_{1}-1}}\left(L(0)+m_0^2\right)^{-2(\delta_{1}-1)}t\right)^{-\frac{1}{2(\delta_{1}-1)}}+2{C_3}\int_{0}^{t}{\left\| f(\cdot, \tau ) \right\|_{{{L}^{2}(0, 1)}}^{2}\mathrm{d}\tau }\nonumber\\
		&\quad+2{C_4}\int_{0}^{t}{\left\| h(\cdot, \tau ) \right\|_{{{L}^{2}(0, 1)}}^{2}\mathrm{d}\tau }.\label{eq:51}
	\end{align}
	{Case 2: } $L(0)+m_0^2<{{C}_{3}}\int_{0}^{T}{{{\left\| f(\cdot, s) \right\|}_{L^{2}(0, 1)}^{2}}\mathrm{d}s}+{{C}_{4}}\int_{0}^{T}{{\left\| h(\cdot, s) \right\|}_{L^{2}(0, 1)}^{2}}\mathrm{d}s$. From \eqref{eq:46}, we obtain
	\begin{align}
		L(T)&\leq L(0)+{{C}_{3}}\int_{0}^{T}{{{\left\| f(\cdot, s) \right\|}_{L^{2}(0, 1)}^{2}}\mathrm{d}s}+{{C}_{4}}\int_{0}^{T}{{\left\| h(\cdot, s) \right\|}_{L^{2}(0, 1)}^{2}}\mathrm{d}s\nonumber\\
		&\leq 2{{C}_{3}}\int_{0}^{T}{{{\left\| f(\cdot, s) \right\|}_{L^{2}(0, 1)}^{2}}\mathrm{d}s}+2{{C}_{4}}\int_{0}^{T}{{\left\| h(\cdot, s) \right\|}_{L^{2}(0, 1)}^{2}}\mathrm{d}s\nonumber\\
		&\leq \left(L^{-2(\delta_{1}-1)}(0)+\frac{m_1}{4^{\delta_{1}-1}}\left(L(0)+m_0^2\right)^{-2(\delta_{1}-1)}T\right)^{-\frac{1}{2(\delta_{1}-1)}}+2{C_3}\int_{0}^{T}{\left\| f(\cdot, \tau ) \right\|_{{{L}^{2}(0, 1)}}^{2}\mathrm{d}\tau }\nonumber\\
		&\quad+2{C_4}\int_{0}^{T}{\left\| h(\cdot, \tau ) \right\|_{{{L}^{2}(0, 1)}}^{2}\mathrm{d}\tau }.\label{eq:52}
	\end{align}
	When $T=0$, \eqref{eq:52} still holds. Combining \eqref{eq:51} and \eqref{eq:52}, we conclude that for all $t\in {\mathbb{R}}_{\geq0}$,
	\begin{align}
		L(t)&\leq \left(L^{-2(\delta_{1}-1)}(0)+\frac{m_1}{4^{\delta_{1}-1}}\left(L(0)+m_0^2\right)^{-2(\delta_{1}-1)}t\right)^{-\frac{1}{2(\delta_{1}-1)}}+2{C_3}\int_{0}^{t}{\left\| f(\cdot, \tau ) \right\|_{{{L}^{2}(0, 1)}}^{2}\mathrm{d}\tau }+2{C_4}\int_{0}^{t}{\left\| h(\cdot, \tau ) \right\|_{{{L}^{2}(0, 1)}}^{2}\mathrm{d}\tau }\nonumber\\
		&\leq \left(\left(L(0)+m_0^2\right)^{-2(\delta_{1}-1)}+\frac{m_1}{4^{\delta_{1}-1}}\left(L(0)+m_0^2\right)^{-2(\delta_{1}-1)}t\right)^{-\frac{1}{2(\delta_{1}-1)}}+2{C_3}\int_{0}^{t}{\left\| f(\cdot, \tau ) \right\|_{{{L}^{2}(0, 1)}}^{2}\mathrm{d}\tau }\nonumber\\
		&\quad+2{C_4}\int_{0}^{t}{\left\| h(\cdot, \tau ) \right\|_{{{L}^{2}(0, 1)}}^{2}\mathrm{d}\tau }\nonumber\\
		&\leq \left(L(0)+m_0^2\right)\left(1+\frac{m_1}{4^{\delta_{1}-1}}t\right)^{-\frac{1}{2(\delta_{1}-1)}}+2{C_3}\int_{0}^{t}{\left\| f(\cdot, \tau ) \right\|_{{{L}^{2}(0, 1)}}^{2}\mathrm{d}\tau }+2{C_4}\int_{0}^{t}{\left\| h(\cdot, \tau ) \right\|_{{{L}^{2}(0, 1)}}^{2}\mathrm{d}\tau }.\label{eq:54}
	\end{align}
	Let $\alpha_{5}=\left(\min\left\{1, \frac{m_1}{4^{\delta_{1}-1}}\right\}\right)^{\frac{1}{2(\delta_{1}-1)}}$. From \eqref{eq:40} and \eqref{eq:54}, we deduce that for all $t\in {\mathbb{R}}_{\geq0}$,
	\begin{align}
		E(t)
		&\leq \frac{\gamma_{1}+1}{\gamma_{2}}\left(E(0)+m_0^2\right)\left(1+\frac{m_1}{4^{\delta_{1}-1}}t\right)^{-\frac{1}{2(\delta_{1}-1)}}+\frac{2C_3}{\gamma_{2}}\int_{0}^{t}{\left\| f(\cdot, \tau ) \right\|_{{{L}^{2}(0, 1)}}^{2}\mathrm{d}\tau }+\frac{2C_4}{\gamma_{2}}\int_{0}^{t}{\left\| h(\cdot, \tau ) \right\|_{{{L}^{2}(0, 1)}}^{2}\mathrm{d}\tau }\nonumber\\
		&\leq \frac{\gamma_{1}+1}{\gamma_{2}\alpha_{5}}\left(E(0)+m_0^2\right)\left(1+t\right)^{-\frac{1}{2(\delta_{1}-1)}}+\frac{2C_3}{\gamma_{2}}\int_{0}^{t}{\left\| f(\cdot, \tau ) \right\|_{{{L}^{2}(0, 1)}}^{2}\mathrm{d}\tau }+\frac{2C_4}{\gamma_{2}}\int_{0}^{t}{\left\| h(\cdot, \tau ) \right\|_{{{L}^{2}(0, 1)}}^{2}\mathrm{d}\tau }.
	\end{align}
	Letting $\alpha_{6}=\alpha_{2}\min\left\{1-\varepsilon_{0}, 1-\frac{1}{\varepsilon_{0}\pi^{2}}\right\}$ and combining with \eqref{eq:42}, we obtain
	\begin{align}
		\|X(t)\|_{\mathcal{H}}^{2}
		&\leq \frac{\left(\gamma_{1}+1\right)\max\left\{3\alpha_{1}, 1\right\}}{\gamma_{2}\alpha_{5}\alpha_{6}}\left(\|X_0\|_{\mathcal{H}}^{2}+m_0^2\right)(1+t)^{-\frac{1}{2(\delta_{1}-1)}}+\frac{2C_3}{\gamma_{2}\alpha_{6}}\int_{0}^{t}{\left\| f(\cdot, \tau ) \right\|_{{{L}^{2}(0, 1)}}^{2}\mathrm{d}\tau }\nonumber\\
		&\quad+\frac{2C_4}{\gamma_{2}\alpha_{6}}\int_{0}^{t}{\left\| h(\cdot, \tau ) \right\|_{{{L}^{2}(0, 1)}}^{2}\mathrm{d}\tau },\quad \forall t\in {\mathbb{R}}_{\geq0}.
	\end{align}
	Let~$\hat{a}_{1}=\frac{\left(\gamma_{1}+1\right)\max\left\{3\alpha_{1}, 1\right\}}{\gamma_{2}\alpha_{5}\alpha_{6}}, \hat{C}_{1}=\max \left\{\frac{2C_3}{\gamma_{2}\alpha_{6}}, \frac{2C_4}{\gamma_{2}\alpha_{6}}\right\}$. Then we obtain
	\begin{align*}
		\|X(t)\|_{\mathcal{H}}^{2}&\leq\hat{a}_{1}\left(\|X_0\|_{\mathcal{H}}^{2}+\underset{s\geq0}{\mathop{\sup}} \left\|\psi_{0x}(\cdot, s) \right\|_{L^2(0, 1)}^{2}\right)(1+t)^{-\frac{1}{2(\delta_{1}-1)}}\nonumber\\
		&\quad+\hat{C}_{1}\left(\int_{0}^{t}{\left\| f(\cdot, \tau ) \right\|_{{{L}^{2}(0, 1)}}^{2}\mathrm{d}\tau }+\int_{0}^{t}{\left\| h(\cdot, \tau ) \right\|_{{{L}^{2}(0, 1)}}^{2}\mathrm{d}\tau }\right),\quad \forall t\in {\mathbb{R}}_{\geq0}.
	\end{align*}
	For any $a\geq0$ and $b\geq 0$, it holds that $\left(a+b\right)^{\frac{1}{2}}\leq a^{\frac{1}{2}}+b^{\frac{1}{2}}$, which implies that for all $t\in {\mathbb{R}}_{\geq0}$,
	\begin{align}
		\|X(t)\|_{\mathcal{H}}
		&\leq \sqrt{\hat{a}_{1}}\left(\|X_0\|_{\mathcal{H}}+\underset{s\geq0}{\mathop{\sup}} \left\|\psi_{0x}(\cdot, s) \right\|_{L^2(0, 1)}\right)(1+t)^{-\frac{1}{4(\delta_{1}-1)}}+\sqrt{\hat{C}_{1}}\left(\int_{0}^{t}{\left\| f(\cdot, \tau ) \right\|_{{{L}^{2}(0, 1)}}^{2}\mathrm{d}\tau }\right)^{\frac{1}{2}}\nonumber\\
		&\quad+\sqrt{\hat{C}_{1}}\left(\int_{0}^{t}{\left\| h(\cdot, \tau ) \right\|_{{{L}^{2}(0, 1)}}^{2}\mathrm{d}\tau }\right)^{\frac{1}{2}}.\label{eq:6.23}
	\end{align}
	By Definition \ref{def:Def 1}, when $\delta_{1}\in \left(1, \frac{3}{2}\right)$, system \eqref{eq:sys} is PiISS. In addition, when $f\equiv h\equiv0$, from \eqref{eq:36a} we obtain
	\begin{align*}
		\frac{\mathrm{d}}{\mathrm{d}t}E(t)=\frac{1}{2}{{\int_{0}^{1}{\int_{0}^{\infty }{{g}'(s)\left| \eta _{x}^{t}(x, s) \right|^{2}}}}}\mathrm{d}s\mathrm{d}x\leq0, \quad \forall t\in \mathbb{R}_{>0},
	\end{align*}
	thus,
	\begin{equation}
		E(t)\leq E(0),\quad \forall t\in \mathbb{R}_{\geq0}.
	\end{equation}
	From \eqref{eq:6.6}, we obtain
	\begin{align*}
		\int_{0}^{\infty }\!\!{{{g}^{\frac{1}{2}}}(s)\int_{0}^{1}\!\!{{{\left| \eta _{x}^{t}(x, s) \right|}^{2}}\mathrm{d}x}}\mathrm{d}s&\leq \frac{4E(0)}{\hat{b}}G_1+2\int_{0}^{t}\!\!{{{g}^{\frac{1}{2}}}(s)\int_{0}^{1}\!\!{\psi _{x}^{2}(x, t-s)\mathrm{d}x}}\mathrm{d}s+2\int_{t}^{\infty}\!\!{{{g}^{\frac{1}{2}}}(s)\int_{0}^{1}\!\!{\psi _{x}^{2}(x, t-s)\mathrm{d}x}}\mathrm{d}s\\
		&\leq \left(\frac{8E(0)}{\hat{b}}+2m_0^2\right)G_1,\quad \forall t\in {\mathbb{R}}_{\geq0}.
	\end{align*}
	Let $\overline{\alpha}_{3}=\max\left\{\frac{8}{\hat{b}}G_1, 2G_1\right\}$. From \eqref{eq:6.7}, we obtain
	\begin{align*}
		\left(\int_{0}^{1}{\int_{0}^{\infty }{g(s){{\left| \eta _{x}^{t}(x, s) \right|}^{2}}}}\mathrm{d}s\mathrm{d}x\right)^{2{\delta }_{1}-1}&\leq\left(\overline{\alpha}_{3}\left(E(0)+m_0^2\right)\right)^{2({\delta }_{1}-1)}\int_{0}^{1}{\int_{0}^{\infty }{{{g}^{{{\delta }_{1}}}}(s)}}{{\left| \eta _{x}^{t}(x, s) \right|}^{2}}\mathrm{d}s\mathrm{d}x,\quad \forall t\in {\mathbb{R}}_{\geq0}.
	\end{align*}
	Similarly, let $\overline{\alpha}_{4}=\max\left\{\alpha _{1}^{2{\delta_{1}}-1}{\left(\frac{2}{{\alpha }_{2}}\right)}^{2({\delta_{1}}-1)}, \alpha _{1}^{2{\delta_{1}}-1}(2\overline{\alpha}_{3})^{2(\delta_{1}-1)}\right\}$. From \eqref{eq:41}, it follows that for all $t\in\mathbb{R}_{\geq0}$,
	\begin{align*}
		{{E}^{2{\delta_{1}}-1}}(t)
		& \le \alpha _{1}^{2{\delta_{1}}-1}{\left(\frac{2E(0)}{{\alpha }_{2}}\right)}^{2({\delta_{1}}-1)}e_{1}(t)+\alpha _{1}^{2{\delta_{1}}-1}(2\overline{\alpha}_{3})^{2(\delta_{1}-1)}\left(E(0)+m_0^2\right)^{2(\delta_{1}-1)}\int_{0}^{1}\!{\int_{0}^{\infty }\!\!{{{g}^{{\delta_{1}}}}(s){{\left| \eta _{x}^{t}(x, s) \right|}^{2}}}}\mathrm{d}s\mathrm{d}x\\
		&\leq \overline{\alpha}_{4}\left(E(0)+m_0^2\right)^{2(\delta_{1}-1)}\left(e_{1}(t)+\int_{0}^{1}\!{\int_{0}^{\infty }\!\!{{{g}^{{\delta_{1}}}}(s){{\left| \eta _{x}^{t}(x, s) \right|}^{2}}}}\mathrm{d}s\mathrm{d}x\right).
	\end{align*}
	From \eqref{eq:39} and \eqref{eq:40}, we obtain
	\begin{align*}
		\frac{\mathrm{d}}{\mathrm{d}t}L(t)
		&\leq-\frac{\sigma_{1}}{\overline{\alpha}_{4}\gamma_{1}^{2\delta_{1}-1}\left(E(0)+m_0^2\right)^{2(\delta_{1}-1)}}L^{2\delta_1-1}(t),\quad \forall t\in {\mathbb{R}}_{>0}.
	\end{align*}
	Let $\overline{m}_{1}=2(\delta_{1}-1)\frac{\sigma_{1}}{\overline{\alpha}_{4}\gamma_{1}^{2\delta_{1}-1}}$. By Lemma \ref{lem:Lemma 6.1}, we have
	\begin{align*}
		L(t)&\leq \left(L^{-2(\delta_{1}-1)}(0)+\frac{\overline{m}_{1}}{\left(E(0)+m_0^2\right)^{2(\delta_{1}-1)}}t\right)^{-\frac{1}{2(\delta_{1}-1)}},\quad \forall t\in {\mathbb{R}}_{\geq0}.
	\end{align*}
	Let $\overline{\alpha}_{5}=\left(\min\left\{1, \overline{m}_{1}\gamma_{1}^{2(\delta_{1}-1)}\right\}\right)^{\frac{1}{2(\delta_{1}-1)}}$. Then we obtain
	\begin{align*}
		E(t)&\leq \frac{1}{\gamma_{2}}\left(\left(\gamma_{1}E(0)\right)^{-2(\delta_{1}-1)}+\overline{m}_{1}\left(E(0)+m_0^2\right)^{-2(\delta_{1}-1)}t\right)^{-\frac{1}{2(\delta_{1}-1)}}\\
		&\leq \frac{\gamma_{1}}{\gamma_{2}}\left(E^{-2(\delta_{1}-1)}(0)+\overline{m}_{1}\gamma_{1}^{2(\delta_{1}-1)}\left(E(0)+m_0^2\right)^{-2(\delta_{1}-1)}t\right)^{-\frac{1}{2(\delta_{1}-1)}}\\
		&\leq\frac{\gamma_{1}}{\gamma_{2}\overline{\alpha}_{5}}\left(E^{-2(\delta_{1}-1)}(0)+\left(E(0)+m_0^2\right)^{-2(\delta_{1}-1)}t\right)^{-\frac{1}{2(\delta_{1}-1)}}\\
		&\leq\frac{\gamma_{1}}{\gamma_{2}\overline{\alpha}_{5}}\left(E(0)+m_0^2\right)(1+t)^{-\frac{1}{2(\delta_{1}-1)}},\quad \forall t\in {\mathbb{R}}_{\geq0}.\label{eq:66}
	\end{align*}
	From \eqref{eq:his}, \eqref{eq:5}, \eqref{eq:66}, and Young's inequality, we deduce that
	\begin{align*}
		\int_{0}^{t}{\int_{0}^{1}{{{\left| \eta _{x}^{t}(x, s) \right|}^{2}}\mathrm{d}x}}\mathrm{d}s&\leq 2\int_{0}^{t}{\int_{0}^{1}{{\psi_{x}^{2}(x, s)}\mathrm{d}x}}\mathrm{d}s+2\int_{0}^{t}{\int_{0}^{1}{{\psi_{x}^{2}(x, t-s)}\mathrm{d}x}}\mathrm{d}s\\
		&\leq \frac{4}{\hat{b}}E(t)t+\frac{4}{\hat{b}}\int_{0}^{t}{E(t-s)}\mathrm{d}s\\
		&\leq \frac{4\gamma_{1}}{\hat{b}\gamma_{2}\overline{\alpha}_{5}}\left(E(0)+m_0^2\right)(1+t)^{-\frac{3-2\delta_{1}}{2(\delta_{1}-1)}} +\frac{4\gamma_{1}}{\hat{b}\gamma_{2}\overline{\alpha}_{5}}\left(E(0)+m_0^2\right)\left[\frac{2(\delta_{1}-1)}{3-2\delta_{1}}(1+t-s)^{\frac{2\delta_{1}-3}{2(\delta_{1}-1)}}\right]_{s=0}^{s=t}\\
		&\leq\frac{4\gamma_{1}}{\hat{b}\gamma_{2}\overline{\alpha}_{5}}\left(E(0)+m_0^2\right)(1+t)^{-\frac{3-2\delta_{1}}{2(\delta_{1}-1)}} +\frac{4\gamma_{1}}{\hat{b}\gamma_{2}\overline{\alpha}_{5}}\left(E(0)+m_0^2\right)\frac{2(\delta_{1}-1)}{3-2\delta_{1}}\\
		&\leq\frac{4\gamma_{1}}{\hat{b}\gamma_{2}\overline{\alpha}_{5}}\left(E(0)+m_0^2\right)\left(1+\frac{2(\delta_{1}-1)}{3-2\delta_{1}}\right)\\
		&\leq \frac{4\gamma_{1}}{\hat{b}\gamma_{2}\overline{\alpha}_{5}(3-2\delta_{1})}\left(E(0)+m_0^2\right).
	\end{align*}
	Let $\zeta=\left(\frac{4\gamma_{1}}{\hat{b}\gamma_{2}\overline{\alpha}_{5}(3-2\delta_{1})}\right)^{\delta_{1}-1}$, where $\zeta$ is a constant independent of time $t$. Then we obtain
	\begin{align*}
		\left(\int_{0}^{\infty}{\int_{0}^{1}{{{\left| \eta _{x}^{t}(x, s) \right|}^{2}}\mathrm{d}x}}\mathrm{d}s\right)^{\delta_{1}-1}\leq \zeta\left(E(0)+m_0^2\right)^{\delta_{1}-1},\quad \forall t\in {\mathbb{R}}_{\geq0}.
	\end{align*}
	For all $t\in {\mathbb{R}}_{\geq0}$, by H\"{o}lder's inequality we obtain
	\begin{align*}
		\int_{0}^{1}{\int_{0}^{\infty }{g(s){{\left| \eta _{x}^{t}(x, s) \right|}^{2}}}}\mathrm{d}s\mathrm{d}x&\le{\left(\int_{0}^{1}{\int_{0}^{\infty }{{{g}^{\delta_{1}}(s)}}{{\left| \eta _{x}^{t}(x, s) \right|}^{2}}\mathrm{d}s\mathrm{d}x}\right)^{\frac{1}{\delta_{1}}}}{{\left(\int_{0}^{1}{\int_{0}^{\infty }}{{\left| \eta _{x}^{t}(x, s) \right|}^{2}}\mathrm{d}s\mathrm{d}x\right)}^{\frac{\delta_{1}-1}{\delta_{1}}}},
	\end{align*}
	which leads to
	\begin{align}
		\left(\int_{0}^{1}{\int_{0}^{\infty }{g(s){{\left| \eta _{x}^{t}(x, s) \right|}^{2}}}}\mathrm{d}s\mathrm{d}x \right)^{\delta_1 }
		&\leq \zeta\left( E(0)+m_0^2 \right)^{\delta_{1}-1}\int_{0}^{1}{\int_{0}^{\infty }{g^{\delta_{1}}(s){{\left| \eta _{x}^{t}(x, s) \right|}^{2}}}}\mathrm{d}s\mathrm{d}x.
	\end{align}
	Similarly, let $\tilde{\alpha}_{4}=\max\left\{\alpha _{1}^{\delta_{1}}{\left(\frac{2}{{\alpha }_{2}}\right)}^{\delta_{1}-1}, \alpha _{1}^{\delta_{1}}2^{\delta_{1}-1}\zeta\right\}$. From \eqref{eq:41}, we obtain
	\begin{align*}
		{{E}^{\delta_{1}}}(t)
		& \le \alpha _{1}^{\delta_{1}}{\left(\frac{2E(0)}{{\alpha }_{2}}\right)}^{\delta_{1}-1}e_{1}(t)+\alpha _{1}^{\delta_{1}}2^{\delta_{1}-1}\zeta\left(E(0)+m_0^2\right)^{\delta_{1}-1}\int_{0}^{1}{\int_{0}^{\infty }{{{g}^{{\delta_{1}}}}(s){{\left| \eta _{x}^{t}(x, s) \right|}^{2}}}}\mathrm{d}s\mathrm{d}x\\
		&\leq \tilde{\alpha}_{4}\left(E(0)+m_0^2\right)^{\delta_{1}-1}\left(e_{1}(t)+\int_{0}^{1}{\int_{0}^{\infty }{{{g}^{{\delta_{1}}}}(s){{\left| \eta _{x}^{t}(x, s) \right|}^{2}}}}\mathrm{d}s\mathrm{d}x\right),\quad \forall t\in {\mathbb{R}}_{\geq0}.
	\end{align*}
	From \eqref{eq:39} and \eqref{eq:40}, we obtain
	\begin{align*}
		\frac{\mathrm{d}}{\mathrm{d}t}L(t)
		&\leq-\frac{\sigma_{1}}{\tilde{\alpha}_{4}\left(E(0)+m_0^2\right)^{\delta_{1}-1}\gamma_{1}^{\delta_{1}}}L^{\delta_1}(t),\quad \forall t\in {\mathbb{R}}_{>0}.
	\end{align*}
	Let $ \tilde{m}_{1}=\frac{\sigma_{1}(\delta_{1}-1)}{\tilde{\alpha}_{4}\gamma_{1}^{\delta_{1}}}$. By Lemma \ref{lem:Lemma 6.1}, we have
	\begin{align*}
		L(t)&\leq \left(L^{-(\delta_{1}-1)}(0)+\frac{\sigma_{1}}{\tilde{\alpha}_{4}\left(E(0)+m_0^2\right)^{\delta_{1}-1}\gamma_{1}^{\delta_{1}}}(\delta_{1}-1)t\right)^{-\frac{1}{\delta_{1}-1}}\\
		&=\left(L^{-(\delta_{1}-1)}(0)+\tilde{m}_{1}\left(E(0)+m_0^2\right)^{-(\delta_{1}-1)}t\right)^{-\frac{1}{\delta_{1}-1}},\quad \forall t\in {\mathbb{R}}_{\geq0}.
	\end{align*}
	Let $\tilde{\alpha}_{5}=\left(\min\left\{1, \tilde{m}_{1}\gamma_{1}^{(\delta_{1}-1)}\right\}\right)^{\frac{1}{\delta_{1}-1}}$. Then we obtain
	\begin{align*}
		E(t)
		&\leq \frac{\gamma_{1}}{\gamma_{2}}\left(E^{-(\delta_{1}-1)}(0)+\tilde{m}_{1}\gamma_{1}^{(\delta_{1}-1)}\left(E(0)+m_0^2\right)^{-(\delta_{1}-1)}t\right)^{-\frac{1}{(\delta_{1}-1)}}\\
		&\leq\frac{\gamma_{1}}{\gamma_{2}\tilde{\alpha}_{5}}\left(E(0)+m_0^2\right)(1+t)^{-\frac{1}{(\delta_{1}-1)}},\quad \forall t\in {\mathbb{R}}_{\geq0}.
	\end{align*}
	Let $\hat{{a}}_{2}=\frac{\gamma_{1}\max\left\{3\alpha_{1}, 1\right\}}{\gamma_{2}\tilde{\alpha}_{5}\alpha_{6}}$. From \eqref{eq:42}, we deduce that
	\begin{align*}
		\|X(t)\|_{\mathcal{H}}^{2}&\leq \frac{\gamma_{1}}{\gamma_{2}\tilde{\alpha}_{5}\alpha_{6}}\left(3\alpha_{1}\|X_0\|_{\mathcal{H}}^{2}+\underset{s\geq0}{\mathop{\sup}} \left\|\psi_{0x}(\cdot, s) \right\|_{L^2(0, 1)}^{2}\right)(1+t)^{-\frac{1}{(\delta_{1}-1)}}\\
		&\leq \hat{{a}}_{2}\left(\|X_0\|_{\mathcal{H}}^{2}+\underset{s\geq0}{\mathop{\sup}} \left\|\psi_{0x}(\cdot, s) \right\|_{L^2(0, 1)}^{2}\right)(1+t)^{-\frac{1}{(\delta_{1}-1)}},\quad \forall t\in {\mathbb{R}}_{\geq0}.
	\end{align*}
	That is, for all $t\in {\mathbb{R}}_{\geq0}$, we have
	\begin{align}
		\|X(t)\|_{\mathcal{H}}&\leq \sqrt{\hat{{a}}_{2}}\left(\|X_0\|_{\mathcal{H}}+\underset{s\geq0}{\mathop{\sup}} \left\|\psi_{0x}(\cdot, s) \right\|_{L^2(0, 1)}\right)(1+t)^{-\frac{1}{2(\delta_{1}-1)}}.\label{eq:6.24}
	\end{align}
	\paragraph*{\textbf{Case 2: $\delta_{1}=1$}.}
	From \eqref{eq:39}, \eqref{eq:40}, and \eqref{eq:41}, we obtain
	\begin{equation*}
		\begin{split}
			\frac{\mathrm{d}}{\mathrm{d}t}L\left(t\right)
			& \le -\frac{{{\sigma }_{1}}}{{{\gamma }_{1}}{{\alpha }_{1}}}L(t)+{C_3}\int_{0}^{1}{{{f}^{2}}(x, t)\mathrm{d}x}+{C_4}\int_{0}^{1}{{{h}^{2}}(x, t)\mathrm{d}x},\quad \forall t\in {\mathbb{R}}_{>0}.
		\end{split}
	\end{equation*}
	By Gronwall's differential inequality, we obtain
	\begin{align}
		L(t)&\le L(0){\exp\left(-\frac{{{\sigma }_{1}}}{{{\gamma }_{1}}{{\alpha }_{1}}}t\right)}+\int_{0}^{t}{{{e}^{\frac{{{\sigma }_{1}}}{{{\gamma }_{1}}{{\alpha }_{1}}}\left(\tau -t\right)}}\left({C_3}\int_{0}^{1}{{{f}^{2}}\left(x, \tau \right)\mathrm{d}x}+{C_4}\int_{0}^{1}{{{h}^{2}}\left(x, \tau \right)\mathrm{d}x}\right)\mathrm{d}\tau } \nonumber\\ 
		& \le L(0){\exp\left(-\frac{{{\sigma }_{1}}}{{{\gamma }_{1}}{{\alpha }_{1}}}t\right)}+{C_3}\int_{0}^{t}{\left\| f(\cdot, \tau ) \right\|_{{{L}^{2}(0, 1)}}^{2}\mathrm{d}\tau }+{C_4}\int_{0}^{t}{\left\| h(\cdot, \tau ) \right\|_{{{L}^{2}(0, 1)}}^{2}\mathrm{d}\tau },\quad \forall t\in {\mathbb{R}}_{\geq0}.\label{eq:44}
	\end{align}
	From \eqref{eq:40} and \eqref{eq:44}, we deduce that for all $t\in {\mathbb{R}}_{\geq0}$,
	\begin{equation*}
		E(t)\le \frac{{{\gamma }_{1}}}{{{\gamma }_{2}}}E(0){\exp\left(-\frac{{{\sigma }_{1}}}{{{\gamma }_{1}}{{\alpha }_{1}}}t\right)}+\frac{{C_3}}{{{\gamma }_{2}}}\int_{0}^{t}{\left\| f(\cdot, \tau ) \right\|_{{{L}^{2}(0, 1)}}^{2}\mathrm{d}\tau }+\frac{{C_4}}{{{\gamma }_{2}}}\int_{0}^{t}{\left\| h(\cdot, \tau ) \right\|_{{{L}^{2}(0, 1)}}^{2}\mathrm{d}\tau }.\label{eq:45}
	\end{equation*}
	From \eqref{eq:42}, we obtain for all $t\in {\mathbb{R}}_{\geq0}$,
	\begin{align*}
		\|X(t)\|_{\mathcal{H}}^{2}&\leq 	\frac{3\alpha_{1}\gamma_{1}}{\gamma_{2}\alpha_{6}}\|X_0\|_{\mathcal{H}}^{2}\exp\left(-\frac{{{\sigma }_{1}}}{{{\gamma }_{1}}{{\alpha }_{1}}}t\right)+\frac{C_3}{\gamma_{2}\alpha_{6}}\int_{0}^{t}{\left\| f(\cdot, \tau ) \right\|_{{{L}^{2}(0, 1)}}^{2}\mathrm{d}\tau } +\frac{C_4}{\gamma_{2}\alpha_{6}}\int_{0}^{t}{\left\| h(\cdot, \tau ) \right\|_{{{L}^{2}(0, 1)}}^{2}\mathrm{d}\tau }.
	\end{align*}
	Similarly, let $\hat{a}_{3}=\frac{3\alpha_{1}\gamma_{1}}{\gamma_{2}\alpha_{6}}, \hat{C}_{2}=\max\left\{\frac{C_3}{\gamma_{2}\alpha_{6}}, \frac{C_4}{\gamma_{2}\alpha_{6}}\right\}$, and $M=\frac{\sigma_{1}}{\gamma_{1}\alpha_{1}}$, then we have
	\begin{align}
		\|X(t)\|_{\mathcal{H}}&\leq \sqrt{\hat{a}_{3}}\|X_0\|_{\mathcal{H}}\exp\left(-\frac{M}{2}t\right)+\sqrt{\hat{C}_{2}}\left(\int_{0}^{t}{\left\| f(\cdot, \tau ) \right\|_{{{L}^{2}(0, 1)}}^{2}\mathrm{d}\tau }\right)^{\frac{1}{2}}\nonumber\\
		&\quad +\sqrt{\hat{C}_{2}}\left(\int_{0}^{t}{\left\| h(\cdot, \tau ) \right\|_{{{L}^{2}(0, 1)}}^{2}\mathrm{d}\tau }\right)^{\frac{1}{2}},\quad \forall t\in {\mathbb{R}}_{\geq0}.\label{eq:6.29}
	\end{align}
	From Remark \ref{rem:remark2}, it follows that the system \eqref{eq:sys} is EiISS when $\delta_{1}=1$. In addition, when $f\equiv h\equiv0$, from \eqref{eq:6.29} we obtain
	\begin{equation}
		\|X(t)\|_{\mathcal{H}}\leq \sqrt{\hat{a}_{3}}\|X_0\|_{\mathcal{H}}\exp\left(-\frac{M}{2}t\right),\quad \forall t\in {\mathbb{R}}_{\geq0}.\label{eq:6.30}
	\end{equation}
	\par In summary, by choosing the constant $C=\max\left\{\sqrt{\hat{a}_{1}}, \sqrt{\hat{a}_{2}}, \sqrt{\hat{a}_{3}}, \sqrt{\hat{C}_{1}}, \sqrt{\hat{C}_{2}}\right\}$, and combining \eqref{eq:6.23}, \eqref{eq:6.24}, \eqref{eq:6.29}, and \eqref{eq:6.30}, we complete the proof of Theorem \ref{thm:main result}.
\end{theoremproof}
\section{Conclusion}\label{sec:8}
This paper investigated the well-posedness and stability of a class of Timoshenko equations subject to infinite history memory and external disturbances. Within the iISS framework, we established two types of iISS estimates: PiISS under the Tolksdorf condition on the relaxation function, and EiISS when the relaxation function decays exponentially. In particular, this paper introduced a rigorous definition of PiISS that avoids the use the graph norm and hence, remains consistent with the classical notion of iISS. It should be noted, however, that the proposed iISS framework is currently restricted to the case of equal wave speed ratios. For unequal wave speed ratios, the mixed derivative term $\psi_{xt}$ in \eqref{eq:5.25a} poses a significant challenge, as it cannot be handled directly within the present approach. In the absence of external disturbances, existing literature commonly employs second-order energy functionals to handle this term, with the cost of requiring higher regularity assumptions on the initial data and relying on intricate inequality estimates. Unfortunately, such techniques are not readily extendable to the iISS setting in the presence of external disturbances. Consequently, stability analysis for systems with unequal wave speed ratios within the iISS framework becomes substantially more involved and calls for novel analytical tools, which we intend to pursue in future work. Beyond this direction, our future research will also focus on further relaxing the constraints imposed on the relaxation function, as well as extending the current analysis to Timoshenko equations with other boundary conditions or with boundary disturbances.

\typeout{get arXiv to do 4 passes: Label(s) may have changed. Rerun}

\begin{thebibliography}{10}
	\providecommand \doibase [0]{http://dx.doi.org/}%
	
	\bibitem{Al-Mahdi2024}
	A.M. Al-Mahdi. New decay results for Timoshenko system in the light of the second spectrum of frequency with infinite memory and nonlinear damping of variable exponent type. {\it Asymptotic Analysis}. 2024\string;138\string:101--133.
	\newblock DOI:10.3233/ASY-231892.
	
	\bibitem{AlMahdi2021}
	A.M. Al-Mahdi, M.M. Al-Gharabli, A. Guesmia, S.A. Messaoudi. New decay results for a viscoelastic-type Timoshenko system with infinite memory. {\it Zeitschrift f{\"u}r Angewandte Mathematik und Physik}. 2021\string;72\string:1--24.
	\newblock DOI:10.1007/s00033-020-01446-x.
	
	\bibitem{AlOmari2021}
	S. Al-Omari. New decay results for a partially dissipative viscoelastic Timoshenko system with infinite memory. {\it arXiv:2109.05811}. 2021.
	
	\bibitem{AmmarKhodja2003}
	F. Ammar-Khodja, A. Benabdallah, J.E. Mu{\~n}oz Rivera, R. Racke. Energy decay for Timoshenko systems of memory type. {\it Journal of Differential Equations}. 2003\string;194\string:82--115.
	\newblock DOI:10.1016/S0022-0396(03)00185-2.
	
	\bibitem{Bi2025}
	Y. Bi, P. Deng, J. Zheng, G. Zhu. Local integral input-to-state stability for non-autonomous infinite-dimensional systems. {\it Communications in Nonlinear Science and Numerical Simulation}. 2026\string;162\string.
	\newblock DOI:10.1016/j.cnsns.2026.110324.
	
	\bibitem{Boulaaras2026}
	S. Boulaaras, D. Ouchenane. Well-posedness, general stability, and numerical analysis of a nonlinear thermodiffusive Timoshenko system with second sound, infinite hereditary memory, distributed delay, and logarithmic dissipation. {\it ZAMM - Journal of Applied Mathematics and Mechanics / Zeitschrift f{\"u}r Angewandte Mathematik und Mechanik}. 2026\string;106(6)\string.
	\newblock DOI:10.1002/zamm.70488.
	
	\bibitem{Chen2025}
	Q. Chen, J. Zheng, G. Zhu. Input-to-state stability and integral input-to-state stability in various norms for 1-D nonlinear parabolic PDEs. {\it Journal of Control and Decision}. 2025\string;12(5)\string:847--859.
	\newblock DOI:10.1080/23307706.2023.2284155.
	
	\bibitem{Dashkovskiy2013}
	S. Dashkovskiy, A. Mironchenko. Input-to-state stability of infinite-dimensional control systems. {\it Mathematics of Control, Signals, and Systems}. 2013\string;25\string:1--35.
	\newblock DOI:10.1007/s00498-012-0090-2.
	
	\bibitem{Dridi2021}
	H. Dridi, K. Zennir. Well-posedness and energy decay for some thermoelastic systems of Timoshenko type with Kelvin-Voigt damping. {\it SeMA Journal}. 2021\string;78\string:385-400.
	\newblock DOI:10.1007/s40324-021-00239-0.
	
	\bibitem{Evans:2010}
	L.C. Evans. {\it Partial Differential Equations}.
	\newblock Providence, RI: American Mathematical Society, 2010.
	
	\bibitem{Zheng:2021}
	X. Guo, J. Zheng. Lyapunov-type inequalities for $\psi$-Laplacian equations. {\it Italian Journal of Pure and Applied Mathematics}. 2021\string;45\string:977-989.
	
	\bibitem{Guesmia2008}
	A. Guesmia, S.A. Messaoudi. On the control of a viscoelastic damped Timoshenko-type system. {\it Applied Mathematics and Computation}. 2008\string;206\string:589-597.
	\newblock DOI:10.1016/j.amc.2008.05.122.
	
	\bibitem{Guesmia2012}
	A. Guesmia, S.A. Messaoudi, A. Soufyane. Stabilization of a linear Timoshenko system with infinite history and applications to the Timoshenko-heat systems. {\it Electronic Journal of Differential Equations}. 2012\string;193\string:1-45.
	\newblock https://inria.hal.science/hal-01281868v1.
	
	\bibitem{Jacob2019}
	B. Jacob, F. L. Schwenninger, H. Zwart. On continuity of solutions for parabolic control systems and input-to-state stability. {\it Journal of Differential Equations}. 2019\string;266(10)\string:6284--6306.
	\newblock DOI:10.1016/j.jde.2018.11.004.
	
	\bibitem{SilvaRacke2021}
	M.A. Jorge Silva, R. Racke. Effects of history and heat models on the stability of thermoelastic Timoshenko systems. {\it Journal of Differential Equations}. 2021\string;275\string:167-203.
	\newblock DOI:10.1016/j.jde.2020.11.041.
	
	\bibitem{Karafyllis2017}
	I. Karafyllis, M. Krstic. ISS in different norms for 1-D parabolic PDEs with boundary disturbances. {\it SIAM Journal on Control and Optimization}. 2017\string;55(3)\string:1716-1751.
	\newblock DOI:10.1137/16M1073753.
	
	\bibitem{Karafyllis2018}
	I. Karafyllis, M. Krstic. {\it Input-to-State Stability for {PDE}s}.
	\newblock Cham: Springer-Verlag, 2018.
	
	\bibitem{Karafyllis2021}
	I. Karafyllis, M. Krstic. ISS estimates in the spatial sup-norm for nonlinear 1-D parabolic PDEs. {\it ESAIM Control Optimisation and Calculus of Variations}. 2021\string;27\string.
	\newblock DOI:10.1051/cocv/2021053.
	
	\bibitem{2018Karafyllis}
	I. Karafyllis, M. Krstic. Decay estimates for 1-D parabolic PDEs with boundary disturbances. {\it ESAIM Control Optimisation and Calculus of Variations}. 2018\string;24\string:1511-1540.
	\newblock DOI:10.1051/cocv/2018043.
	
	\bibitem{Karafyllis2022}
	I. Karafyllis, M. Krstic. Damping-robustness of various 1-D wave PDEs under boundary feedback control. {\it IFAC-PapersOnLine}. 2022\string;55(30)\string:31--36.
	\newblock DOI:10.1016/j.ifacol.2022.11.024.
	
	\bibitem{Karafyllis2023}
	I. Karafyllis, M. Krstic. ISS-based robustness to various neglected damping mechanisms for the 1-D wave PDE. {\it Mathematics of Control, Signals, and Systems}. 2023\string;35(4)\string:741--779.
	\newblock DOI:10.1007/s00498-023-00353-6.
	
	\bibitem{Lamare2018}
	P.O. Lamare, J. Auriol, F. Di Meglio, U. F. Aarsnes. Robust output regulation of $2 \times 2$ hyperbolic systems: control law and input-to-state stability. In: Proceedings of the 2018 Annual American Control Conference (ACC). 2018\string; Milwaukee, WI, USA\string:1732--1739.
	\newblock DOI:10.1109/ACC.2018.8431421.
	
	\bibitem{Lieberman:1991}
	G.M. Lieberman. The natural generalization of the natural conditions of Ladyzhenskaya and Ural'tseva for elliptic equations. {\it Communications in Partial Differential Equations}. 1991\string;16\string:311-361.
	\newblock DOI:10.1080/03605309108820761.
	
	\bibitem{Liu2025}
	R.J. Liu, Q. Zhang. Stability of the Timoshenko beam equation with one weakly degenerate local {K}elvin-{V}oigt damping. {\it ZAMM - Journal of Applied Mathematics and Mechanics / Zeitschrift f{\"u}r Angewandte Mathematik und Mechanik}. 2025\string;105(3)\string.
	\newblock DOI:10.1002/zamm.202300262.
	
	\bibitem{LiuMao2024}
	Y. Liu, S. Mao. Decay property of the Timoshenko-Fourier system with memory-type dissipation. {\it Journal of Differential Equations}. 2024\string;401\string:469-491.
	\newblock DOI:10.1016/j.jde.2024.05.001.
	
	\bibitem{Logemann2013}
	H. Logemann. Stabilization of well-posed infinite-dimensional systems by dynamic sampled-data feedback. {\it SIAM Journal on Control and Optimization}. 2013\string;51(3)\string:1203-1231.
	\newblock DOI:10.1137/110850396.
	
	\bibitem{Messaoudi2009}
	S.A. Messaoudi, B. Said-Houari. Uniform decay in a Timoshenko-type system with past history. {\it Journal of Mathematical Analysis and Applications}. 2009\string;360\string:459-475.
	\newblock DOI:10.1016/j.jmaa.2009.06.064.
	
	\bibitem{Messaoudi2025}
	S.A. Messaoudi, A. Keddi. On the well posedness and the stability of a thermoelastic Gurtin-Pipkin-Timoshenko system without the second spectrum. {\it Zeitschrift f{\"u}r Analysis und ihre Anwendungen}. 2025\string;44\string:79-96.
	\newblock DOI:10.4171/ZAA/1759.
	
	\bibitem{Mironchenko2020}
	A. Mironchenko, C. Prieur. Input-to-state stability of infinite-dimensional systems: recent results and open questions. {\it SIAM Review}. 2020\string;62(3)\string:529-614.
	\newblock DOI:10.1137/19M1291248.
	
	\bibitem{Mironchenko2023a}
	A. Mironchenko. {\it Input-to-State Stability: Theory and Applications}.
	\newblock Cham: Springer-Verlag, 2023.
	\newblock DOI:10.1007/978-3-031-14674-9.
	
	\bibitem{Mironchenko2023pre}
	A. Mironchenko. Input-to-state stability of distributed parameter systems. {\it arXiv:2302.00535v1}. 2023.
	
	\bibitem{MP2011}
	F. Mazenc, C. Prieur. Strict Lyapunov functions for semilinear parabolic partial differential equations. {\it Mathematical Control and Related Fields}. 2011\string;1(2)\string:231-250.
	\newblock DOI:10.3934/mcrf.2011.1.231.
	
	\bibitem{Rivera:2008}
	J.E. Mu{\~n}oz Rivera, R. Racke. Timoshenko systems with indefinite damping. {\it Journal of Mathematical Analysis and Applications}. 2008\string;341\string:1068-1083.
	\newblock DOI:10.1016/j.jmaa.2007.11.012.
	
	\bibitem{Rivera2008}
	J.E. Mu{\~n}oz Rivera, H.D. Fern{\'a}ndez Sare. Stability of Timoshenko systems with past history. {\it Journal of Mathematical Analysis and Applications}. 2008\string;339\string:482-502.
	\newblock DOI:10.1016/j.jmaa.2007.07.012.
	
	\bibitem{Rivera2002}
	J.E. Mu{\~n}oz Rivera, R. Racke. Mildly dissipative nonlinear Timoshenko systems - global existence and exponential stability. {\it Journal of Mathematical Analysis and Applications}. 2002\string;276\string:248-278.
	\newblock DOI:1016/S0022-247X(02)00436-5.
	
	\bibitem{Pazy1983}
	A. Pazy. {\it Semigroups of {L}inear {O}perators and {A}pplications to {P}artial {D}ifferential {E}quations}.
	\newblock New York: Springer-Verlag, 1983.
	\newblock DOI:10.1007/978-1-4612-5561-1.
	
	\bibitem{PM2012}
	C. Prieur, F. Mazenc. ISS-{L}yapunov functions for time-varying hyperbolic systems of balance laws. {\it Mathematics of Control, Signals, and Systems}. 2012\string;24\string:111-134.
	\newblock DOI:10.1007/s00498-012-0074-2.
	
	\bibitem{Raposo2005}
	C.A. Raposo, J. Ferreira, M.L. Santos, N.N.O. Castro. Exponential stability for the Timoshenko system with two weak dampings. {\it Applied Mathematics Letters}. 2005\string;18\string:535-541.
	\newblock DOI:10.1016/j.aml.2004.03.017.
	
	\bibitem{Sontag1989}
	E.D. Sontag. Smooth stabilization implies coprime factorization. {\it IEEE Transactions on Automatic Control}. 1989\string;34(4)\string:435-443.
	\newblock DOI:10.1109/9.28018.
	
	\bibitem{Sontag1998}
	E.D. Sontag. Comments on integral variants of {ISS}. {\it Systems \& Control Letters}. 1998\string;34\string:93-100.
	\newblock DOI:10.1016/S0167-6911(98)00003-6.
	
	\bibitem{Soufyane2003}
	A. Soufyane, A. Wehbe. Uniform stabilization for the Timoshenko beam by a locally distributed damping. {\it Electronic Journal of Differential Equations}. 2003\string;1\string:1-14.
	
	\bibitem{Timoshenko1921}
	S.P. Timoshenko. On the correction for shear of the differential equation for transverse vibrations of prismatic bars. {\it The Philosophical Magazine}. 1921\string;41\string:744-746.
	\newblock DOI:10.1080/14786442108636264.
	
	\bibitem{Tolksdorf:1983}
	P. Tolksdorf. On the Dirichlet problem for quasilinear equations in domains with conical boundary points. {\it Communications in Partial Differential Equations}. 1983\string;8\string:773-817.
	\newblock DOI:10.1080/03605308308820285.
	
	\bibitem{Wakaiki:2022}
	M. Wakaiki. Semi-uniform input-to-state stability of infinite-dimensional systems. {\it Mathematics of Control, Signals, and Systems}. 2022\string;34\string:789-817.
	\newblock DOI:10.1007/s00498-022-00326-1.
	
	\bibitem{Zeng2025}
	S.D. Zeng, M. Aouadi. Polynomial decay rate in extensible Timoshenko-Boltzmann beam. {\it ZAMM - Journal of Applied Mathematics and Mechanics / Zeitschrift f{\"u}r Angewandte Mathematik und Mechanik}. 2025\string;105(5)\string.
	\newblock DOI:10.1002/zamm.70084.
	
	\bibitem{Zheng:2022}
	J. Zheng, L.S. Tavares. A free boundary problem with subcritical exponents in Orlicz spaces. {\it Annali di Matematica Pura ed Applicata}. 2022\string;201\string:695-731.
	\newblock DOI:10.1007/s10231-021-01134-1.
	
	\bibitem{Zheng2018}
	J. Zheng, G. Zhu. Input-to-state stability with respect to boundary disturbances for a class of semi-linear parabolic equations. {\it Automatica}. 2018\string;97\string:271-277.
	\newblock DOI:10.1016/j.automatica.2018.08.007.
	
	\bibitem{Zheng2019}
	J. Zheng, G. Zhu. A De Giorgi iteration-based approach for the establishment of {ISS} properties for Burgers' equation with boundary and in-domain disturbances. {\it IEEE Transactions on Automatic Control}. 2019\string;64(8)\string:3476--3483.
	\newblock DOI:10.1109/TAC.2018.2880160.
	
	\bibitem{Zheng2020}
	J. Zheng, G. Zhu. ISS-like estimates for nonlinear parabolic PDEs with variable coefficients on higher dimensional domains. {\it Systems \& Control Letters}. 2020\string;146\string.
	\newblock DOI:10.1016/j.sysconle.2020.104808.
	
	\bibitem{Zheng2020b}
	J. Zheng, G. Zhu. Input-to-state stability for a class of one-dimensional nonlinear parabolic PDEs with nonlinear boundary conditions. {\it SIAM Journal on Control and Optimization}. 2020\string;58(6)\string:2567-2587.
	\newblock DOI:10.1137/19M1283720.
	
	\bibitem{Zheng2021}
	J. Zheng, G. Zhu. Approximations of {L}yapunov functionals for {ISS} analysis of a class of higher dimensional nonlinear parabolic {PDE}s. {\it Automatica}. 2021\string;125\string.
	\newblock DOI:10.1016/j.automatica.2020.109414.
	
	\bibitem{Zheng2022b}
	J. Zheng, G. Zhu, S. Dashkovskiy. Relative stability in the sup-norm and input-to-state stability in the spatial sup-norm for parabolic PDEs. {\it IEEE Transactions on Automatic Control}. 2022\string;67(10)\string:5361--5375.
	\newblock DOI:10.1109/TAC.2022.3192325.
	
	\bibitem{Zheng2026}
	J. Zheng, G. Zhu. A unified {L}yapunov method for {ISS} of {PDE}s: a tutorial on constructing generalized {L}yapunov functionals for parabolic and hyperbolic equations. {\it Annual Reviews in Control}. 2026\string;62\string:101065.
	\newblock DOI:10.1016/j.arcontrol.2026.101065.
	
\end{thebibliography}
\end{document}